\newcommand{\sumprime}{\if@display\sideset{}{'}\sum%
            \else\sum'\fi}
\begin{document}

\numberwithin{equation}{section}

\newtheorem{theorem}{Theorem}[section]
\newtheorem{proposition}[theorem]{Proposition}
\newtheorem{conjecture}[theorem]{Conjecture}
\def\theconjecture{\unskip}
\newtheorem{corollary}[theorem]{Corollary}
\newtheorem{lemma}[theorem]{Lemma}
\newtheorem{observation}[theorem]{Observation}
\newtheorem{definition}{Definition}
\numberwithin{definition}{section} 
\newtheorem{remark}{Remark}
\def\theremark{\unskip}
\newtheorem{kl}{Key Lemma}
\def\thekl{\unskip}
\newtheorem{question}{Question}
\def\thequestion{\unskip}
\newtheorem{example}{Example}
\def\theexample{\unskip}
\newtheorem{problem}{Problem}

\thanks{Supported by National Natural Science Foundation of China, No. 12271101}

\address{Department of Mathematical Sciences, Fudan University, Shanghai, 20043, China}

\email{boychen@fudan.edu.cn}

\title[Capacities,  Green function and Bergman functions]{Capacities, Green function and Bergman functions}

\author{Bo-Yong Chen}

\begin{abstract}
Using the logarithmic capacity,  we give quantitative estimates of the Green function,  as well as lower bounds of the Bergman kernel for bounded pseudoconvex domains in $\mathbb C^n$ and the Bergman distance for bounded planar domains.  In particular,  it is shown that the Bergman kernel satisfies $K_\Omega(z)\gtrsim \delta_\Omega(z)^{-2}$ for any bounded pseudoconvex domain with $C^0-$boundary.  An application to holomorphic motions is given.  
\end{abstract}

\maketitle

\section{Introduction}
 Let $\Omega$ be a bounded domain in $\mathbb C$ with $0\in \partial \Omega$. 
 The famous Wiener's criterion states that $0$  is a regular point if and only if  
 $$
 \sum_{k=1}^\infty \frac{k}{\log [1/\mathcal C_l(  A_k\cap \Omega^c )]}=\infty,
 $$
  where $
    A_k:=\{z\in \mathbb C:\frac1{2^{k+1}}\le |z|\le \frac1{2^k}\}.
    $ and $\mathcal C_l(\cdot)$ stands for the standard logarithmic capacity (relative to $\infty$).  
  It is also known that the regularity of $0$ implies that the Bergman kernel is exhaustive at $0$ (cf. \cite{Ohsawa}) and the Bergman metric is complete at $0$ (cf. \cite{BP}, \cite{Herbort}). Zwonek \cite{Zwonek} showed that the Bergman kernel  is exhaustive at $0$ if and only if 
  $$
\sum_{k=1}^\infty \frac{2^{2k}}{\log [1/\mathcal C_l (A_k(z)\cap \Omega^c)]}\rightarrow \infty\ \ \ (z\rightarrow 0),
 $$
 where $A_k(z):=\{w\in \mathbb C:\frac1{2^{k+1}}\le |w-z|\le \frac1{2^k}\}$. 
 On the other hand, a similar characterization for the Bergman completeness at $0$ is still missing, although some partial results exist (cf. \cite{PZ},  \cite{Jucha}).  Only recently,  interesting lower and upper bounds of the Bergman kernel  $K_\Omega(z)$ for planar domains were obtained in terms of the logarithmic capacity of $\Omega^c$ relative to $z$ (cf.  \cite{BlockiSuita},  \cite{BZ}). 
 
In the spirit of Wiener's criterion,  we give the following 
 
 \begin{theorem}\label{th:CapBerg}
 Let $\Omega$ be a bounded pseudoconvex domain in $\mathbb C^n$.  Suppose there are constants $\alpha>1$ and $\varepsilon>0$ such that for any $z\in \Omega$,  there exists a complex line $\mathcal L_z\ni z$ such that
 \begin{equation}\label{eq:CB_1}
\mathcal C_l\left(\mathcal L_z\cap B(z,\alpha \delta_\Omega(z))\cap \Omega^c\right)\ge \varepsilon\, \delta_\Omega(z),
 \end{equation}
 where $\delta_\Omega(z):=d(z,\partial \Omega)$ and $B(z,r)=\{w\in \mathbb C^n:|w-z|<r\}$.   Then 
 \begin{equation}\label{eq:CB_2}
 K_\Omega(z) \gtrsim  \delta_\Omega(z)^{-2},\ \ \ z\in \Omega.
 \end{equation}
 \end{theorem}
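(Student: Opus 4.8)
The plan is to reduce the $n$-dimensional estimate to a one-variable estimate on the slice $\Omega\cap\mathcal L_z$ and then to propagate it by the Ohsawa--Takegoshi extension theorem. Fix $z\in\Omega$, put $d:=\delta_\Omega(z)$, and after an affine unitary change of coordinates assume $z=0$ and $\mathcal L_z=H:=\{w=(w_1,w'):w'=0\}\cong\mathbb C$. Suppose we have established the planar bound $K_U(0)\ge c\,d^{-2}$ for the connected component $U$ of $\Omega\cap H$ containing $0$. Taking the extremal $f_0\in A^2(U)$ with $f_0(0)=1$ and $\|f_0\|_{L^2(U)}^2=K_U(0)^{-1}\le c^{-1}d^2$, extending $f_0$ by zero across the other (relatively open and closed) components of $\Omega\cap H$ to get $f\in A^2(\Omega\cap H)$ of the same norm, and applying Ohsawa--Takegoshi (for the affine subspace $H$, legitimate since $\Omega$ is pseudoconvex) to extend $f$ to $F\in A^2(\Omega)$ with $F|_{\Omega\cap H}=f$ and $\|F\|_{L^2(\Omega)}^2\le C_n(\operatorname{diam}\Omega)^{2(n-1)}\|f\|_{L^2(\Omega\cap H)}^2$, we get $K_\Omega(0)\ge|F(0)|^2/\|F\|_{L^2(\Omega)}^2\gtrsim d^{-2}$, the implied constant depending on $n$, $\operatorname{diam}\Omega$, $\alpha$, $\varepsilon$. (The scaling $K_{\lambda\Omega}(\lambda z)=\lambda^{-2n}K_\Omega(z)$ shows a dependence on $\operatorname{diam}\Omega$ is genuinely needed when $n\ge2$.)

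So everything comes down to the planar estimate. Here $U$ is a bounded domain with $0\in U$, and its complement inside $H\cong\mathbb C$ contains the compact set $E:=\mathcal L_z\cap B(z,\alpha d)\cap\Omega^c$; by hypothesis $\mathcal C_l(E)\ge\varepsilon d$, while $E$ is disjoint from $B(z,d)\subset\Omega$, so $E\subset\{d\le|w_1|\le\alpha d\}$ and in particular is non-polar, whence $U$ is hyperbolic. I would invoke the Suita-type lower bound $K_U(0)\ge\tfrac1\pi c_\beta^U(0)^2$ (the solution of the Suita conjecture; alternatively a direct Donnelly--Fefferman-type $\bar\partial$-estimate on $U$ with weight $2g_U(\cdot,0)$), where $c_\beta^U(0)=\exp\bigl(\lim_{w\to0}(g_U(w,0)-\log|w|)\bigr)$ and $g_U(\cdot,0)\le0$ is the Green function of $U$ with pole at $0$. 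Since $U\subset\mathbb C\setminus E$, monotonicity of the Green function gives $c_\beta^U(0)\ge c_\beta^{\mathbb C\setminus E}(0)$, and the theorem is reduced to the capacity lemma: \emph{if $E\subset\{d\le|\zeta|\le\alpha d\}$ is compact with $\mathcal C_l(E)\ge\varepsilon d$, then $c_\beta^{\mathbb C\setminus E}(0)\ge\varepsilon\,d^{-1}/\alpha^2$.}

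Proving this lemma is, I expect, the main obstacle: it is the only place where the logarithmic-capacity hypothesis really enters, and it amounts to a comparison between the Green function of $\mathbb C\setminus E$ with pole at $0$ and the one with pole at $\infty$. By scaling take $d=1$. Let $\mu$ be the equilibrium measure of $E$ with potential $p_\mu(w)=\int\log|w-\zeta|\,d\mu(\zeta)$; by Frostman's theorem $p_\mu\ge\log\mathcal C_l(E)$ on $\mathbb C$ with equality q.e.\ on $E$, and $g_\infty(w):=p_\mu(w)-\log\mathcal C_l(E)$ is the Green function of $\mathbb C\setminus E$ with pole at $\infty$, so $g_\infty\ge0$, $g_\infty=0$ q.e.\ on $E$, and $g_\infty(w)=\log|w|-\log\mathcal C_l(E)+o(1)$ as $w\to\infty$. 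Consider
\[
H(w):=g_{\mathbb C\setminus E}(w,0)-\log|w|+g_\infty(w),
\]
a harmonic function on the sphere-domain $\mathbb C\setminus E$ (at $0$ the logarithmic poles of $g_{\mathbb C\setminus E}(\cdot,0)$ and of $-\log|w|$ cancel; at $\infty$ those of $-\log|w|$ and of $g_\infty$ cancel, so $H(\infty)$ is finite). $H$ is bounded, the only nontrivial point being that $g_{\mathbb C\setminus E}(\cdot,0)$ is bounded below away from any neighbourhood of $0$, which follows from the maximum principle applied to the negative harmonic function $g_{\mathbb C\setminus E}(\cdot,0)$ on $(\mathbb C\setminus E)\setminus\overline{B(0,r)}$ (it extends to the finite value $g_{\mathbb C\setminus E}(\infty,0)$ at the interior point $\infty$). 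On $E$ one has, q.e., $H(w)=-\log|w|\ge-\log\alpha$, so by the minimum principle modulo the polar set of irregular boundary points, $H\ge-\log\alpha$ on $\mathbb C\setminus E$. Evaluating at $0$, where $H(0)=\log c_\beta^{\mathbb C\setminus E}(0)+g_\infty(0)$, and using $g_\infty(0)=\int\log|\zeta|\,d\mu(\zeta)-\log\mathcal C_l(E)\le\log\alpha-\log\varepsilon$ (since $1\le|\zeta|\le\alpha$ on $E$ and $\mathcal C_l(E)\ge\varepsilon$), we get $\log c_\beta^{\mathbb C\setminus E}(0)\ge-\log\alpha-(\log\alpha-\log\varepsilon)$, i.e.\ $c_\beta^{\mathbb C\setminus E}(0)\ge\varepsilon/\alpha^2$, which is the lemma.

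Assembling the pieces yields $K_\Omega(z)\ge C(n,\operatorname{diam}\Omega,\alpha,\varepsilon)\,\delta_\Omega(z)^{-2}$. Apart from the capacity lemma, the ingredients — the sharp Ohsawa--Takegoshi extension theorem and the Suita-type inequality — are standard; the only care needed is with the possibly complicated geometry of the slice $\Omega\cap H$ and of $U$, which is absorbed by passing to connected components and working with quasi-everywhere statements in potential theory.
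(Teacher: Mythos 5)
Your argument is correct, but the planar step is genuinely different from the paper's. The paper never invokes the Suita conjecture: it proves a Green-function sublevel-set estimate (Lemma \ref{lm:Green_2}) by conformally transporting the Dirichlet capacity $\mathcal C_d(\overline{\mathbb D_{\alpha r}(w)},\Omega)$ to a Riemann-sphere complement of a disc and controlling the logarithmic capacity of the image of $K_r$ under the M\"obius map $T(\zeta)=\alpha r/(\zeta-w)$ via the distortion inequality from \cite{Ransford}, Theorem 5.3.1; combined with Harnack and the lower bound on $\mathcal C_d$ from \eqref{eq:G_12}, this yields $\{g_{\mathcal L_z\cap\Omega}(\cdot,z)\le-c\}\subset \mathcal L_z\cap B(z,\delta_\Omega(z)/2)$, and then the standard volume estimate \eqref{eq:BG_1} gives $K_{\mathcal L_z\cap\Omega}(z)\gtrsim\delta_\Omega(z)^{-2}$. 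You instead pass through B\l ocki's solution of the Suita conjecture, $K_U(0)\ge\frac1\pi c_\beta^U(0)^2$, bound $c_\beta^U$ below by $c_\beta^{\mathbb C\setminus E}$ by inclusion, and prove a clean Robin-constant lemma by a two-pole comparison of Green functions; the extended minimum principle applied to $H=g_{\mathbb C\setminus E}(\cdot,0)-\log|\cdot|+g_\infty$ and the Frostman estimate for $g_\infty(0)$ are both correct, and the resulting constant $\varepsilon/\alpha^2$ is explicit and sharp in its dependence. Your route is more conceptual and self-evidently sharp, but it imports a deep theorem; the paper's route is more elementary (it needs nothing beyond the Green-function/capacity machinery already developed in Sections 2--3, which is then reused for the Green-function and Bergman-distance results) and fits the paper's overall architecture. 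One minor point you should tidy: $E:=\mathcal L_z\cap B(z,\alpha d)\cap\Omega^c$ need not be compact since $B(z,\alpha d)$ is open; replace it by $\mathcal L_z\cap\overline{B(z,\alpha d)}\cap\Omega^c$ (or by a compact subset of nearly full capacity), which still lies in $\{d\le|\zeta|\le\alpha d\}$, so nothing else changes. The paper sidesteps this by shifting to $z^\ast\in\partial\Omega$ and working with the compact sets $K_t(z^\ast)=\overline{\mathbb D_t(z^\ast)}\setminus\Omega$.
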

 
 It is well-known that \eqref{eq:CB_2} holds for any bounded pseudoconvex domain with Lipschitz boundary (cf.  \cite{OT}).  As a consequence of Theorem \ref{th:CapBerg},  we obtain    the following
 
 \begin{theorem}\label{th:KernelEstimate}
 Let $\Omega$ be a bounded pseudoconvex domain in $\mathbb C^n$.  Then  \eqref{eq:CB_2} holds under one of the following conditions: 
\begin{enumerate} 
\item[$(a)$] $\partial\Omega$ is $C^0$,  i.e.,  it can be written locally as the graph of a continuous function.  
\item[$(b)$] There are constants $\varepsilon,r_0>0$  such that
     \begin{equation}\label{eq:CB_3}
    |B(\zeta,r)\cap \Omega^c|\ge \varepsilon |B(\zeta,r)|,\ \ \   \zeta\in \partial \Omega,\,0<r\le r_0,
     \end{equation}
      where $|\cdot|$ stands for the volume.       
\item[$(c)$] $\Omega$ is fat $($i.e.,  $\overline{\Omega}^\circ=\Omega)$ and\/ $\overline{\Omega}$ is $\mathcal O(U)-$convex,  where $U$ is a neighborhood of\/  $\overline{\Omega}$.  
\item[$(d)$] $\Omega$ is strongly hyperconvex.  
      \end{enumerate}
 \end{theorem}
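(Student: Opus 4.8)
\textbf{Reduction to Theorem~\ref{th:CapBerg}.} In each case I would verify the hypothesis \eqref{eq:CB_1} of Theorem~\ref{th:CapBerg} with constants $\alpha,\varepsilon>0$ depending only on $\Omega$; then \eqref{eq:CB_2} follows at once. Since $\Omega$ is bounded, $K_\Omega(z)\gtrsim 1\gtrsim\delta_\Omega(z)^{-2}$ once $\delta_\Omega(z)$ is bounded below, so we may take $z$ arbitrarily close to $\partial\Omega$; write $\delta:=\delta_\Omega(z)$ and fix a nearest point $\zeta\in\partial\Omega$, $|z-\zeta|=\delta$. Two elementary facts handle the bookkeeping: (i) a line segment of length $\ell$ has logarithmic capacity $\ell/4$, so it suffices to place a segment of length $\gtrsim\delta$ inside $\Omega^c$ on some complex line through $z$ inside $B(z,\alpha\delta)$; (ii) by P\'olya's inequality $\mathcal C_l(E)\ge\sqrt{|E|/\pi}$ (the disk minimizes logarithmic capacity for a given area), so it also suffices to find a complex line $\mathcal L_z\ni z$ with $|\mathcal L_z\cap\Omega^c\cap B(z,\alpha\delta)|\gtrsim\delta^2$.

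\textbf{Case $(b)$.} With $r:=2\delta\le r_0$, \eqref{eq:CB_3} gives $|B(\zeta,2\delta)\cap\Omega^c|\ge\varepsilon|B(\zeta,2\delta)|$, hence $|\Omega^c\cap B(z,3\delta)|\gtrsim\delta^{2n}$ because $B(\zeta,2\delta)\subset B(z,3\delta)$. Disintegrating Lebesgue measure over the complex lines through $z$ by the standard integral-geometric identity $|S\cap B(z,R)|=c_n\int_{\mathbb{P}^{n-1}}\big(\int_{\mathcal L_\theta}\mathbf{1}_S(z+w)\,|w|^{2n-2}\,dA(w)\big)\,d\mu(\theta)$ and using $|w|^{2n-2}\le R^{2n-2}$ on $B(z,R)$, some $\mathcal L_z\ni z$ satisfies $|\mathcal L_z\cap\Omega^c\cap B(z,3\delta)|\gtrsim\delta^2$; now apply (ii) with $\alpha=3$.

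\textbf{Case $(a)$, the crux.} By compactness of $\partial\Omega$ fix $r_*,M>0$ such that near every boundary point there are \emph{unitary} coordinates $(w_1,\dots,w_n)$ centred there in which $\Omega=\{\,\mathrm{Im}\,w_n>\phi(x_1,\dots,x_{2n-1})\,\}$ locally, with $\phi$ continuous, $\phi(0)=0$ and $|\phi|\le M$ on $\{\,|w_1|^2+\cdots+|w_{n-1}|^2+(\mathrm{Re}\,w_n)^2<r_*^2\,\}$; this is possible because $U(n)$ acts transitively on the unit sphere of $\mathbb{C}^n$, so the graph normal can be rotated onto $\pm e_{2n}$, the sign chosen so that $\Omega$ is the side $\mathrm{Im}\,w_n>\phi$. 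Work in the chart at $\zeta$, so $\zeta=0$, $|z|=\delta$, and set $\hat\phi(w):=\phi(\mathrm{Re}\,w_1,\mathrm{Im}\,w_1,\dots,\mathrm{Re}\,w_{n-1},\mathrm{Im}\,w_{n-1},\mathrm{Re}\,w_n)$, so $\Omega=\{\mathrm{Im}\,w_n-\hat\phi>0\}$ locally. Since $\mathrm{Im}\,w_n$ is pluriharmonic and $\hat\phi$ does not depend on $\mathrm{Im}\,w_n$, pseudoconvexity of $\Omega$ forces $\hat\phi$ to be plurisubharmonic near $0$: otherwise the usual Hartogs-disc construction (shifting $w_n$ by $i$ times the harmonic majorant of $\hat\phi$ on the boundary of a disc, which leaves $\hat\phi$ unchanged) yields an analytic disc with boundary in $\Omega$ but centre in $\Omega^c$, contradicting the Kontinu\"itatssatz. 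Restricting the psh function $\hat\phi$ to the complex line through $0$ and $z$ gives a subharmonic $u$ with $u(0)=\hat\phi(0)=0$, with $|u|\le M$ on a disc of radius $\ge r_*$ (the chart condition constrains only $\mathrm{Re}\,w_n$ among the last-coordinate data), and with $u=\hat\phi(z)=\phi(z_1,\dots,z_{2n-1})$ at the point corresponding to $z$ (which lies at distance $\delta$ from $0$ on that line). Harnack's inequality for the nonnegative superharmonic function $M-u$ then gives $\phi(z_1,\dots,z_{2n-1})=\hat\phi(z)\ge-(4M/r_*)\,\delta$, whence $\rho:=z_{2n}-\phi(z_1,\dots,z_{2n-1})$ satisfies $0<\rho\le z_{2n}+(4M/r_*)\delta\le C'\delta$ with $C'=C'(M,r_*)$. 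On the ``vertical'' complex line $\mathcal L_z:=z+\mathbb{C}e_n$ (the one along which $w_1,\dots,w_{n-1}$ are frozen), $\mathcal L_z\cap\Omega^c$ is locally the subgraph $\{x_{2n}\le\phi(z_1,\dots,z_{2n-2},x_{2n-1})\}$ in the $w_n$-plane, whose slice over $x_{2n-1}=z_{2n-1}$ is $\{x_{2n}\le z_{2n}-\rho\}$; hence for $\alpha:=2C'$ the segment $\{\,x_{2n-1}=z_{2n-1},\ z_{2n}-\alpha\delta<x_{2n}\le z_{2n}-\rho\,\}$ lies on $\mathcal L_z$, inside $\Omega^c\cap B(z,\alpha\delta)$ (and inside the chart once $\delta$ is small), and has length $\alpha\delta-\rho\ge C'\delta$. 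By (i) this gives \eqref{eq:CB_1}. (Run at scale $|x'|$ instead of $\delta$, the same argument yields $\phi(x')\ge-(4M/r_*)|x'|$ near $0$, so $\Omega^c$ contains a truncated cone of fixed aperture at every boundary point; thus in fact $(a)\Rightarrow(b)$.)

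\textbf{Cases $(c)$, $(d)$ and the main obstacle.} I would reduce $(c)$ and $(d)$ to the situation above. In case $(d)$, a strongly hyperconvex domain carries a bounded continuous psh exhaustion whose modulus is bounded below by a fixed power of $\delta_\Omega$; such a barrier forces $\Omega^c$ to have positive density at each boundary point, i.e.\ condition $(b)$ (alternatively, restricting the barrier to a complex line through $z$ and invoking the one-variable estimate of Wiener/Zwonek type yields \eqref{eq:CB_1} directly). In case $(c)$, $\mathcal O(U)$-convexity of $\overline\Omega$ supplies, near any boundary point $\zeta$, functions $f\in\mathcal O(U)$ with $|f|\le1$ on $\overline\Omega$ and $|f|>1$ at points of $U\setminus\overline\Omega$ arbitrarily close to $\zeta$; fatness keeps $\{|f|>1\}$ from being pluripolar there, and restricting $\log|f|$ to a complex line through $z$ and such a point, with the subharmonicity/maximum-principle estimate, bounds $\mathcal C_l(\mathcal L_z\cap\Omega^c\cap B(z,\alpha\delta))$ from below; the cleanest route is again probably $(c)\Rightarrow(b)$, the maximum principle ruling out a density-zero (hence essentially pluripolar) complement at a boundary point of a fat $\mathcal O(U)$-convex set. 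The genuine obstacle is case $(a)$: selecting the correct local model in unitary coordinates, proving that pseudoconvexity makes $\hat\phi$ plurisubharmonic, and extracting the quantitative estimate $\hat\phi(z)\gtrsim-\delta$ that prevents the complement from ``pinching'' at the nearest boundary point and places $z$ within distance $\lesssim\delta$ of $\Omega^c$ along the vertical complex line. Once this is in hand, $(c)$ and $(d)$ should be routine, provided the relevant structural inputs (peak functions for $\mathcal O(U)$-convex sets; quantitative psh barriers for strongly hyperconvex domains) are invoked correctly.
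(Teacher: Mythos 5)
Your case~$(b)$ is essentially the paper's proof (integral-geometric slicing over lines through $z$, then a length--capacity or area--capacity inequality), so that part is fine. The problems are cases $(a)$, $(c)$, $(d)$.

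\textbf{Case $(a)$ is the genuine gap and your argument fails.} The key claim --- that pseudoconvexity of $\Omega=\{\mathrm{Im}\,w_n>\hat\phi(w)\}$, where $\hat\phi$ depends only on $(w',\mathrm{Re}\,w_n)$, forces $\hat\phi$ to be plurisubharmonic --- is simply false. Already in $n=1$ every domain in $\mathbb C$ is pseudoconvex, yet $\hat\phi(w)=\phi(\mathrm{Re}\,w)$ is subharmonic iff $\phi$ is convex; in $n=2$ the product $\mathbb C\times\{\mathrm{Im}\,w_2>-(\mathrm{Re}\,w_2)^2\}$ is pseudoconvex while $\hat\phi=-(\mathrm{Re}\,w_2)^2$ is not psh. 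The reason the usual Hartogs-disc argument breaks is that adding the harmonic conjugate (needed to make the disc analytic) changes $\mathrm{Re}\,w_n$ and hence changes $\hat\phi$ --- the ``leaves $\hat\phi$ unchanged'' step is wrong. Worse, the quantitative conclusion $\hat\phi(z)\ge-C\delta$ is false independently of the psh claim. Take $\Omega=\{y>-\sqrt{|x|}\}\subset\mathbb C$: for $z=(\delta/2,\sqrt 3\,\delta/2)$ one checks $\zeta=0$ is the nearest boundary point, $\delta_\Omega(z)=\delta$, but $\hat\phi(z)=-\sqrt{\delta/2}\gg\delta$ in modulus. So the vertical line through $z$ does not meet $\Omega^c$ within $B(z,\alpha\delta)$ for any fixed $\alpha$, and the cone-condition corollary you draw in parentheses ($(a)\Rightarrow(b)$) is also false --- $C^0$ boundaries can have outward cusps that prevent any uniform interior/exterior cone. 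The paper's proof of $(a)$ is entirely different: localize near $\zeta_0\in\partial\Omega$, build an increasing family $\{\Omega_t\}$ of \emph{fat} pseudoconvex graph domains, prove $\overline{\Omega}_0$ is $\mathcal O(\Omega_{\varepsilon_0})$-convex by producing separating holomorphic functions via H\"ormander's $L^2$-estimates, invoke Docquier--Grauert for the Runge pair, and reduce to case $(c)$ plus the localization principle.

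\textbf{Cases $(c)$ and $(d)$ are hand-waved, and the proposed reductions to $(b)$ are not established (and probably false).} The paper does not route $(c)$ or $(d)$ through $(b)$. For $(d)$ the argument (after Pflug--Zwonek) is: with $z^\ast$ the nearest boundary point and $\mathcal L_z$ the line through $z,z^\ast$, the maximum principle applied to the psh barrier $\rho$ on circles $\mathcal L_z\cap\partial B(z,r)$ shows each such circle meets $\Omega^c$ for $\delta_\Omega(z)<r<d_0-\delta_\Omega(z)$; then the $1$-Lipschitz circular projection $w\mapsto|w|$ gives $\mathcal C_l(\mathcal L_z\cap B(z,2\delta_\Omega(z))\cap\Omega^c)\ge\delta_\Omega(z)/4$. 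This is a capacity estimate \emph{on one line}, with no volume density statement at all, and your sketch (``barrier forces $\Omega^c$ to have positive density'') is not proved and is doubtful. For $(c)$ the paper approximates $\overline\Omega$ from outside by analytic polyhedra $\Omega'=\{\max_j|f_j|<1\}$ using $\mathcal O(U)$-convexity and fatness (so $\delta_{\Omega_\varepsilon}\to\delta_\Omega$ uniformly), and then runs the same circular-projection argument on $\Omega'$; again this produces a one-line capacity bound, not $(b)$. Your appeal to ``density-zero hence essentially pluripolar'' confuses Lebesgue density with pluripolarity: a set can have density zero and positive capacity (a line segment in $\mathbb C^2$), and conversely. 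In short: $(b)$ is right, $(a)$ has a disproved key lemma and a disproved quantitative estimate, and $(c),(d)$ need the maximum-principle/circular-projection mechanism you did not supply.
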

 
Here are a few remarks.   Recall that the $\mathcal O(U)-$convex hull of a compact set $E\subset U$ is given by   
      $$
      \widehat{E}_{\mathcal O(U)}:=\left\{z\in U: |f(z)|\le \sup_E |f| \ \text{for all}\ f\in \mathcal O(U)\right\}.
      $$   
    We say that  $E$ is $\mathcal O(U)-$convex if $E= \widehat{E}_{\mathcal O(U)}$;  in particular,  if $U=\mathbb C^n$ then $E$ is called polynomially convex.  
 Recall also that a bounded domain $\Omega\subset \mathbb C^n$ is called strongly hyperconvex if there exists a continuous negative psh function $\rho$ defined in a neighborhood $U$ of $\overline{\Omega}$ such that $\Omega=\{z\in U:\rho(z)<0\}$.   Based on the work of Zwonek \cite{Zwonek},  Pflug-Zwonek \cite{PZ}  proved that for any $\varepsilon>0$ there exists a constant $C_\varepsilon>0$ such that $K_\Omega(z)\ge C_\varepsilon\, \delta_\Omega(z)^{-2+\varepsilon}$.  
Note that conditions of type \eqref{eq:CB_3} are popular in second order elliptic boundary problems $($cf.  \cite{Kenig},  Chapter 1,\,\S\,1$)$,  and spectrum theory of the Laplacian $($cf.  \cite{Lieb},\,\cite{Davis}$)$.  
 
 We also have the following analogous result which  does not follow from Theorem \ref{th:CapBerg}.
  
  \begin{proposition}\label{prop:Runge}
  If\/ $\Omega\subset \mathbb C^n$ is a bounded pseudoconvex Runge domain,  then \eqref{eq:CB_2} holds.
  \end{proposition}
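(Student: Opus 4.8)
The plan is to bypass the line–capacity mechanism of Theorem~\ref{th:CapBerg} and instead estimate
$$
K_\Omega(z_0)=\sup\Big\{\tfrac{|f(z_0)|^2}{\int_\Omega|f|^2}:\ f\in\mathcal O(\Omega),\ \textstyle\int_\Omega|f|^2<\infty,\ f\not\equiv0\Big\}
$$
by exhibiting a good competitor $f$. Fix $z_0\in\Omega$, put $d=\delta_\Omega(z_0)$, and choose $p\in\partial\Omega$ with $|z_0-p|=d$, so that $B(z_0,d)\subset\Omega$ is internally tangent to $\partial\Omega$ at $p$. Since \eqref{eq:CB_2} is scale invariant and the implied constant may depend on $\Omega$, it suffices to produce $f\in\mathcal O(\Omega)\cap L^2(\Omega)$ with $f(z_0)=1$ and $\|f\|_{L^2(\Omega)}^2\lesssim_\Omega d^{2}$.

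The decisive step is to turn the Runge hypothesis into a polynomial adapted to the pair $(z_0,p)$. Being Runge and pseudoconvex, $\Omega$ is exhausted by polynomial polyhedra $\Subset\Omega$, the hull $\widehat K_{\mathcal O(\mathbb C^n)}$ of every compact $K\subset\Omega$ lies in $\Omega$, functions holomorphic near $\overline\Omega$ are locally uniformly approximable by polynomials, and in particular $\overline\Omega$ is polynomially convex. Using this, together with the tangency of $B(z_0,d)$ at $p$, I would construct a polynomial $Q$ with $Q(z_0)=0$, $\nabla Q(z_0)\neq0$, and — the crucial requirement — $\sup_{z\in\Omega}|Q(z)|\le\Lambda\,d$ for a constant $\Lambda=\Lambda(\Omega)$; equivalently, $\Omega$ is squeezed into a thin neighbourhood, of transversal width comparable to $d$, of the analytic hypersurface $H:=\{Q=0\}$, which passes through $z_0$. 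The informal point is that near $z_0$ the "thin direction(s)" of $\Omega$ cluster along an essentially algebraic set through $z_0$, and locating this set is where polynomial convexity and an Oka--Weil/Weierstrass approximation enter — precisely the structure that Theorem~\ref{th:CapBerg} cannot detect. After an affine change of coordinates we normalize $|\nabla Q(z_0)|=1$.

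Granting such a $Q$, the second step is an Ohsawa--Takegoshi extension from $H$. Applied to the bounded pseudoconvex domain $\Omega$, the hypersurface $H\cap\Omega\ni z_0$, and the datum $1$, the $L^2$ extension theorem produces $f\in\mathcal O(\Omega)\cap L^2(\Omega)$ with $f\equiv1$ on $H\cap\Omega$ — so $f(z_0)=1$ — and
\[
 \|f\|_{L^2(\Omega)}^2\ \le\ C_n\,(\sup_{\Omega}|Q|)^{2}\int_{H\cap\Omega}|\nabla Q|^{-2}\,dV_{2n-2}\ \lesssim_\Omega\ d^{2},
\]
since $\sup_\Omega|Q|\lesssim_\Omega d$ by construction and the remaining integral is bounded (the degree of $Q$ and $\Omega$ being under control, with $H\cap\Omega$ non-degenerate). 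Hence $K_\Omega(z_0)\ge|f(z_0)|^2/\|f\|_{L^2(\Omega)}^2\gtrsim_\Omega d^{-2}$, which is \eqref{eq:CB_2}.

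I expect the genuine obstacle to lie in the second paragraph: converting "$\Omega$ is Runge" into a concrete $Q$ that vanishes at $z_0$, is non-degenerate there, and is uniformly $O(\delta_\Omega(z_0))$ on all of $\Omega$ — with constants independent of $z_0$, with $\deg Q$ controlled, and with $\nabla Q$ non-vanishing on $H\cap\Omega$. If no single global $Q$ is available with the required uniformity, a natural fallback is to dichotomize the boundary: at points $p$ where $\partial\Omega$ is already locally Lipschitz one simply invokes the classical estimate of \cite{OT}, reserving the polynomial/$L^2$-extension argument for the "bad" boundary points, where the Runge hypothesis must substitute for a cone. The remaining work — pinning down the dependence of the Ohsawa--Takegoshi constant on $Q$, and reconciling the case where $\delta_\Omega(z_0)$ is much smaller than the "global thickness" of $\Omega$ along $H$ — is routine in spirit but is where the bookkeeping concentrates.
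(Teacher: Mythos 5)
The proposal fails at precisely the step you yourself flagged as the ``genuine obstacle,'' and the difficulty is not a bookkeeping issue but a conceptual one: the ``decisive step'' is simply false. You ask for a polynomial $Q$ with $Q(z_0)=0$, $|\nabla Q(z_0)|=1$, and $\sup_\Omega|Q|\le\Lambda\,\delta_\Omega(z_0)$; as you note, this means $\Omega$ is squeezed into a tube of transversal width $O(\delta_\Omega(z_0))$ around $\{Q=0\}$. But the Runge property is an approximation-theoretic property, not a thinness property, and there is no reason a Runge domain should be thin near $z_0$. The unit ball $\mathbb B^n$ is Runge and polynomially convex, yet for $z_0$ near $\partial\mathbb B^n$ with $d=\delta(z_0)$ small, no polynomial normalized by $Q(z_0)=0$, $|\nabla Q(z_0)|=1$ can satisfy $\sup_{\mathbb B^n}|Q|\lesssim d$: the ball contains a full-size ball around the origin, on which any such normalized polynomial has size of order $1$ (a Bernstein/Cauchy-estimate argument, or simply the explicit linear candidate $Q(z)=\langle z_0-z,a\rangle$, gives $\sup_{\mathbb B^n}|Q|\asymp 1$). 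The fallback you sketch --- treating Lipschitz boundary points separately --- does not rescue the argument, since the non-Lipschitz boundary points of a Runge domain need not be ``thin'' in the polynomial sense either. In short, the proposal's geometric input is not available.

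The paper extracts a quite different, purely one-dimensional consequence of the Runge hypothesis: for every complex line $\mathcal L$, \emph{every connected component} of $\Omega\cap\mathcal L$ is simply connected. This is proved by taking $f$ holomorphic on a component $D$, extending $f\cdot\mathbf 1_D$ from $\Omega\cap\mathcal L$ to $\Omega$ by pseudoconvexity, approximating by polynomials by the Runge hypothesis, and restricting back to $D$; thus every holomorphic function on $D$ is a locally uniform limit of polynomials, forcing $D$ to be simply connected. Then one takes $\mathcal L_z$ through $z$ and the nearest boundary point, lets $D_z$ be the component containing $z$ (so $\delta_{D_z}(z)=\delta_\Omega(z)$), applies the Koebe $1/4$ theorem to get $K_{D_z}(z)\ge\tfrac1{16\pi}\delta_{D_z}(z)^{-2}$ for the simply connected planar domain $D_z$, and lifts to $\Omega$ by the Ohsawa--Takegoshi theorem applied to the slice datum $K_{D_z}(\cdot,z)/\sqrt{K_{D_z}(z)}$ (extended by $0$ to the other components). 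Your instinct that Runge and Ohsawa--Takegoshi are the two ingredients is right, but the Runge property must be cashed in on one-dimensional slices (simple connectivity), not as a global polynomial thinness statement.
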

  
  \begin{remark}
  It is known that every star domain in $\mathbb C^n$ is a Runge domain $($cf.  \cite{Almer} or \cite{Kasimi}$)$.
  \end{remark}
  
 The Bergman kernel estimate \eqref{eq:CB_2} has an unexpected application to holomorphic motions.  Following \cite{MSS},  we define  a holomorphic motion of a set $E\subset \mathbb C$ 
as a mapping
$
F:\mathbb D \times E \rightarrow {\mathbb  C},
$
which enjoys the following properties: 
\begin{enumerate}
\item $F(0,z)=z$ for all $z\in
E$.
\item  For every fixed $\lambda\in \mathbb D$, the mapping
$F(\lambda,\cdot):E\rightarrow {\mathbb  C}$ is an injection.
\item  For
every fixed $z\in E$, the mapping $F(\cdot,z):\mathbb D\rightarrow {\mathbb 
C}$ is holomorphic. 
\end{enumerate}

Let $F:\mathbb D\times {\mathbb C}\rightarrow
{\mathbb C}$ be a holomorphic motion of\/  $\mathbb C$\/ fixing $\infty$.  Define quasi-bidiscs as follows
   $$
   \Gamma_r(F):=\left\{(\lambda,F(\lambda,z)): \lambda\in \mathbb D_r,z\in \mathbb D\right\},\ \ \ 0<r<1,
   $$
   where $\mathbb D_r=\{z\in \mathbb C:|z|<r\}$.   
 We obtain a geometric property of quasi-bidiscs as follows.    

\begin{theorem}\label{th:HM}
Let $\delta_\lambda(w)$ denote the boundary distance of the slice $F(\lambda,\mathbb D)$,  $\lambda\in \mathbb D$.  Then for any $0<r'<r<1$,  there exists a constant $C>0$ such that
\begin{equation}\label{eq:HM_1}
\delta_\lambda(w)
\le C \delta_{ \Gamma_r(F)}(\lambda,w) 
\end{equation}
whenever $|\lambda|\le r'$.  
\end{theorem}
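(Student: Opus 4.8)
The inequality $\delta_{\Gamma_r(F)}(\lambda,w)\le\delta_\lambda(w)$ is automatic (a Euclidean ball in $\Gamma_r(F)$ meets the slice $\{\lambda\}\times\mathbb C$ in a disc contained in $F(\lambda,\mathbb D)$), so \eqref{eq:HM_1} is the statement that the distance to $\partial\Gamma_r(F)$ is, up to a constant, already realized in the slice direction. I would prove this directly, without invoking Bergman theory: it suffices to show that a Euclidean ball of radius $\sim\delta_\lambda(w)$ centred at $(\lambda,w)$ lies in $\Gamma_r(F)$ whenever $|\lambda|\le r'$. The preliminary step is to record the consequences of the $\lambda$-lemma \cite{MSS}: $F$ is jointly continuous, and since $\infty$ is fixed each $F(\lambda,\cdot)$ is an orientation-preserving homeomorphism of $\mathbb C$ onto $\mathbb C$; hence $F(\lambda,\mathbb D)$ is a bounded Jordan domain with boundary the Jordan curve $F(\lambda,\partial\mathbb D)$ and $\delta_\lambda(w)=d\bigl(w,F(\lambda,\partial\mathbb D)\bigr)$. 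Fixing radii $r'<\rho<\rho'<r$, joint continuity and compactness give a constant $M$ with $|F(\lambda,z)|\le M$ for $|\lambda|\le\rho'$, $|z|\le1$, so $\delta_\lambda(w)\le 2M$ there.

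The main tool is a Lipschitz bound in $\lambda$. Since $\lambda\mapsto F(\lambda,z)$ is holomorphic and bounded by $M$ on $\overline{\mathbb D_{\rho'}}$, the Cauchy estimate gives $|F(\lambda,z)-F(\lambda',z)|\le L|\lambda-\lambda'|$ for $|\lambda|,|\lambda'|\le\rho$ and $|z|\le1$, with $L$ depending only on $M,\rho,\rho'$; in particular $F(\lambda,\partial\mathbb D)$ and $F(\lambda',\partial\mathbb D)$ are within Hausdorff distance $L|\lambda-\lambda'|$ of each other. Now fix $|\lambda_0|\le r'$ and $w_0\in F(\lambda_0,\mathbb D)$, put $\delta=\delta_{\lambda_0}(w_0)$, and let $C=C(r,r')$ be a large constant whose size is forced by $M$, $L$ and $r-r'$. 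Take $(\lambda,w)$ at Euclidean distance $<\delta/C$ from $(\lambda_0,w_0)$. For $C$ large enough $|\lambda|\le\rho<r$, and the segment from $\lambda_0$ to $\lambda$ stays in $\overline{\mathbb D_\rho}$; along it $d\bigl(w_0,F(\mu,\partial\mathbb D)\bigr)\ge\delta-L|\mu-\lambda_0|>0$, so $w_0$ never lies on the curve $F(\mu,\partial\mathbb D)$, whence the winding number of $F(\mu,\partial\mathbb D)$ about $w_0$ is continuous and integer-valued in $\mu$, hence constant, hence equal to its value $1$ at $\mu=\lambda_0$. Thus $w_0\in F(\lambda,\mathbb D)$ with $d\bigl(w_0,F(\lambda,\partial\mathbb D)\bigr)\ge\delta-L|\lambda-\lambda_0|$; since $|w-w_0|<\delta/C$ and $C$ is large, $w$ lies in the connected ball $B\bigl(w_0,\delta-L|\lambda-\lambda_0|\bigr)$, which misses $F(\lambda,\partial\mathbb D)$ and contains the interior point $w_0$, and therefore lies in $F(\lambda,\mathbb D)$. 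Hence $(\lambda,w)\in\Gamma_r(F)$, so $\delta_{\Gamma_r(F)}(\lambda_0,w_0)\ge\delta/C$, which is \eqref{eq:HM_1}.

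The single delicate point is the topological transport step — that a point lying deep inside the slice over $\lambda_0$ also lies inside the slice over every nearby $\lambda$; the winding-number homotopy above handles it once the boundary curves are known to be uniformly Hausdorff-close, and checking that $C$ can be taken to depend only on $r,r'$ is routine bookkeeping with $M$ and $L$. Note that the argument uses only that $\Gamma_r(F)$ is the union of the slices $\{\lambda\}\times F(\lambda,\mathbb D)$. The relevance of the estimate \eqref{eq:CB_2} to this circle of ideas is the following: once \eqref{eq:HM_1} is in hand, one may check that $\Gamma_r(F)$ is pseudoconvex — its complement is foliated by the holomorphic graphs $\lambda\mapsto F(\lambda,a)$, $|a|>1$ — and satisfies the capacity hypothesis \eqref{eq:CB_1}, and then Theorem \ref{th:CapBerg}, the Ohsawa--Takegoshi inequality, and the classical bound $K_{F(\lambda,\mathbb D)}(w)\asymp\delta_\lambda(w)^{-2}$ for simply connected planar domains together pin down $K_{\Gamma_r(F)}(\lambda,w)$ on the slices $|\lambda|\le r'$.
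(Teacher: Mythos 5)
Your proof is correct, and it takes a genuinely different route from the paper's. The paper proves Theorem~\ref{th:HM} as an \emph{application} of the Bergman kernel estimate \eqref{eq:CB_2}: Proposition~\ref{prop:HM} gives $K_{\Gamma_r(F)}(\lambda,w)\gtrsim\delta_{\Gamma_r(F)}(\lambda,w)^{-2}$ via Theorem~\ref{th:KernelEstimate}/$(c)$, using polynomial convexity of $\overline{\Gamma_r(F)}$ (Slodkowski); then the Circular Distortion Theorem furnishes a holomorphic embedding $\iota:\Gamma_r(F)\hookrightarrow\mathbb D_r\times\mathbb D$ with $\mathbb D_r\times\mathbb D_s\subset\iota(\Gamma_r(F))$, yielding $K_{\Gamma_r(F)}(\lambda^\ast,w^\ast)\asymp|F(\lambda^\ast,z^\ast)-F(\lambda^\ast,z_\ast)|^{-2}$ with $z_\ast$ the reflection of $z^\ast$ through $\partial\mathbb D$ along its ray; combining the two gives $\delta_{\lambda^\ast}(w^\ast)\le|F(\lambda^\ast,z^\ast)-F(\lambda^\ast,z_\ast)|\lesssim\delta_{\Gamma_r(F)}(\lambda^\ast,w^\ast)$. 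Your argument bypasses the Bergman kernel entirely: a Cauchy estimate gives the uniform Lipschitz bound $|F(\lambda,z)-F(\lambda',z)|\le L|\lambda-\lambda'|$ on $\overline{\mathbb D}_\rho\times\overline{\mathbb D}$, and a winding-number homotopy shows that $w_0$ stays inside the Jordan curve $F(\mu,\partial\mathbb D)$ along a short segment from $\lambda_0$, so the Euclidean ball of radius $\delta_{\lambda_0}(w_0)/C$ about $(\lambda_0,w_0)$ is contained in $\Gamma_r(F)$. This is elementary, self-contained, and notably shorter; it even inverts the logical dependence, since, as you observe, one can then feed \eqref{eq:HM_1} back in to recover two-sided bounds on $K_{\Gamma_r(F)}$ over compact $\lambda$-slices. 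What the paper's route buys is precisely what the author advertises --- it showcases the reach of \eqref{eq:CB_2} --- and the intermediate fact $K_{\Gamma_r(F)}\gtrsim\delta_{\Gamma_r(F)}^{-2}$ is of independent interest. Two small points worth spelling out if you polish this: the winding number of $\theta\mapsto F(\mu,e^{i\theta})$ about $w_0$ is $+1$ (not merely $\pm1$) because each $F(\mu,\cdot)$ is orientation-preserving, which follows from quasiconformality or from continuity of the degree in $\mu$ together with $F(0,\cdot)=\mathrm{id}$; and since quasicircles need not be rectifiable, the winding number should be read as the topological degree of the normalized angle map, which is still defined and continuous under uniform convergence of curves avoiding $w_0$.
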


\begin{remark}
The advantage of Theorem \ref{th:HM} is that the analysis in terms of $\delta_\lambda$ is easier on $\Gamma_r(F)$  $($compare \cite{ChenH-index},  p.  1450--1451$)$. 
\end{remark}

The argument in Zwonek \cite{Zwonek}  yields  the following result closely related to Theorem \ref{th:CapBerg}.

 \begin{theorem}\label{th:CapBerg_2}
 Let $\Omega$ be a bounded pseudoconvex domain in $\mathbb C^n$.  Suppose there are constants $\alpha>1$,  $\beta\ge 1$ and $\varepsilon>0$ such that for any $z\in \Omega$,  there exists a complex line $\mathcal L_z\ni z$ such that
 \begin{equation}\label{eq:CB3}
\mathcal C_l\left(\mathcal L_z\cap B(z,\alpha \delta_\Omega(z))\cap \Omega^c\right)\ge \varepsilon\, \delta_\Omega(z)^\beta.
 \end{equation}
  Then 
 \begin{equation}\label{eq:CB4}
 K_\Omega(z) \gtrsim  \delta_\Omega(z)^{-2}|\log \delta_\Omega(z)|^{-1}
 \end{equation}
  for all $z$ sufficiently close to $\partial \Omega$.
 \end{theorem}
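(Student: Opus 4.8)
The plan is to imitate the proof of Theorem~\ref{th:CapBerg} as far as the reduction to one complex variable goes, and then to replace the sharp planar Bergman kernel bound used there by the logarithmically lossy estimate of Zwonek~\cite{Zwonek}. Fix $z\in\Omega$ with $\delta:=\delta_\Omega(z)$ small, write $\mathcal L:=\mathcal L_z$, identify $\mathcal L$ with $\C$, and put $D:=\mathcal L\cap\Omega$ --- a bounded planar domain containing $z$ --- and $F:=\mathcal L\cap B(z,\alpha\delta)\cap\Omega^c=B(z,\alpha\delta)\cap D^c$. Since $B(z,\delta)\subset\Omega$, the compact set $F$ lies in the annulus $\{\delta\le|w-z|\le\alpha\delta\}$, and \eqref{eq:CB3} reads $\mathcal C_l(F)\ge\varepsilon\,\delta^{\beta}$. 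By the Ohsawa--Takegoshi extension theorem applied to the affine slice $D\hookrightarrow\Omega$, every $L^2$ holomorphic function on $D$ extends to $\Omega$ with $L^2$ norm multiplied by a constant depending only on $\operatorname{diam}\Omega$ (hence uniform in $z$); consequently $K_\Omega(z)\gtrsim K_D(z)$, and it suffices to prove $K_D(z)\gtrsim\delta^{-2}|\log\delta|^{-1}$ as $\delta\to0$.

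For the planar bound I would invoke Zwonek's construction \cite{Zwonek}, applied at the single scale $\delta$ rather than over a full dyadic decomposition. Concretely, pick a cutoff $\chi$ with $\chi\equiv1$ on $B(z,\delta/2)$, $\operatorname{supp}\chi\subset B(z,\delta)$ and $|\dbar\chi|\lesssim\delta^{-1}$, and solve $\dbar u=\dbar\chi$ on $D$ by a twisted H\"ormander/Donnelly--Fefferman estimate whose weight combines the singular term $2\,G_D(\cdot,z)$ with an auxiliary plurisubharmonic weight built from $G_D(\cdot,z)$ and from the Green function of $\C\setminus F$ with pole at $\infty$. Near $z$ one has $e^{-2G_D(\cdot,z)}\asymp|w-z|^{-2}$, which is non-integrable and forces $u(z)=0$, so $f:=\chi-u\in\mathcal O(D)$ satisfies $f(z)=1$; on $\operatorname{supp}\dbar\chi$ one has $|w-z|\ge\delta/2$ and, $D$ being bounded, $G_D(w,z)\ge\log(|w-z|/\operatorname{diam}\Omega)$, so $e^{-2G_D(\cdot,z)}\lesssim\delta^{-2}$ there. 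The comparison $G_D(\cdot,z)\ge G_{\C\setminus F}(\cdot,z)$ --- valid since $D\subset\C\setminus F$ --- together with the elementary estimate of the Green function of $\C\setminus F$ at $z$ by $\log(1/\mathcal C_l(F))$ (using $F\subset B(z,\alpha\delta)$), loads the right-hand side of the estimate by at most a factor $\log(1/\mathcal C_l(F))$. Collecting terms yields $\int_D|f|^2\,dA\lesssim\delta^2\log(1/\mathcal C_l(F))$, hence
\[
K_D(z)\ \ge\ \frac{|f(z)|^2}{\int_D|f|^2\,dA}\ \gtrsim\ \frac{1}{\delta^2\log(1/\mathcal C_l(F))}.
\]

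Finally, $\mathcal C_l(F)\ge\varepsilon\,\delta^{\beta}$ gives $\log(1/\mathcal C_l(F))\le\log(1/\varepsilon)+\beta|\log\delta|\lesssim|\log\delta|$ once $\delta$ is small, so $K_D(z)\gtrsim\delta^{-2}|\log\delta|^{-1}$, and with the first paragraph this is \eqref{eq:CB4}. I expect the main obstacle to be the twisted $\dbar$-estimate in the second paragraph: the auxiliary weight must at once keep $e^{-2G_D(\cdot,z)}$ non-integrable at $z$ (to get $f(z)=1$) and contribute no more than $\log(1/\mathcal C_l(F))$ to the solution norm. This is exactly where Zwonek's argument is lossier than the logarithm-free planar estimate underlying Theorem~\ref{th:CapBerg} (which is why that theorem is restricted to the case $\beta=1$), and it accounts for the extra $|\log\delta|^{-1}$ in \eqref{eq:CB4}; the remaining ingredients --- the slice extension via Ohsawa--Takegoshi, the monotonicity $G_D\ge G_{\C\setminus F}$, and the capacity bound for $G_{\C\setminus F}$ --- are routine and uniform in $z$.
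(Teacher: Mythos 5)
Your outer framework is right and matches the paper: reduce to the planar slice $D:=\mathcal L_z\cap\Omega$ via Ohsawa--Takegoshi, prove a one-dimensional lower bound of the form $K_D(z)\gtrsim\delta^{-2}\bigl(-\log\mathcal C_l(E)\bigr)^{-1}$ where $E:=\overline{B(z,\alpha\delta)}\cap\Omega^c$, and then feed in $\mathcal C_l(E)\ge\varepsilon\delta^\beta$ to turn $-\log\mathcal C_l(E)$ into $O(|\log\delta|)$. The gap is precisely the step you yourself flag as ``the main obstacle'': the planar estimate. You propose to get it from a twisted $\dbar$-problem with a weight combining $2G_D(\cdot,z)$ and an unspecified auxiliary plurisubharmonic function built from $G_{\C\setminus E}$. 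But that weight is never written down, and it is far from routine to arrange simultaneously that it be psh, that the Donnelly--Fefferman twist load the solution norm by exactly a factor of order $-\log\mathcal C_l(E)$ rather than a power of $\delta$, and that the singularity at $z$ still kill $u(z)$. As it stands, the crucial lemma is an assertion, not a proof, and the paper's proof does not go this way at all.

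What the paper actually does (Lemma~\ref{lm:CapBerg_2}, following Zwonek) avoids $\dbar$ entirely and is considerably more concrete. Split the plane into three sectors of apex angle $2\pi/3$ at $z$, cutting $E$ into $E_1,E_2,E_3$; by subadditivity of $1/(-\log\mathcal C_l(\cdot))$ one may assume the largest piece satisfies $-\log\mathcal C_l(E_1)\le -3\log\mathcal C_l(E)$ up to a harmless rearrangement. Then take the Cauchy transform of the equilibrium measure $\mu_{E_1}$,
$$
f_{E_1}(\zeta)=\int_{E_1}\frac{d\mu_{E_1}(w)}{\zeta-w},
$$
which is already holomorphic on the slice. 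The $L^2$ bound $\int|f_{E_1}|^2\lesssim -\log\mathcal C_l(E_1)$ is a direct computation (Zwonek's Lemma~3, used as a black box), and the sector confinement is what gives the pointwise lower bound: since $z-w$ lies in a cone of opening $2\pi/3$ and $|z-w|\le\alpha\delta$, the integrand $1/(z-w)$ has bounded argument, so $|f_{E_1}(z)|\ge C_0/(\alpha\delta)$ with $C_0$ numerical. Dividing gives the planar bound. If you wish to persist with a $\dbar$ route you would need to design and verify the weight in detail; otherwise, the fix is to replace the second paragraph of your proposal with this trisection-plus-Cauchy-transform argument.
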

 
 \begin{corollary}\label{cor:VolBerg_2}
 Let $\Omega$ be a bounded pseudoconvex domain in $\mathbb C^n$.  Suppose there are constants $\varepsilon,r_0>0$ and $\beta> 1$,   such that
      \begin{equation}\label{eq:CB5}
    |B(\zeta,r)\cap \Omega^c|\ge \varepsilon |B(\zeta,r)|^\beta,\ \ \   \zeta\in \partial \Omega,\,0<r\le r_0,
      \end{equation} 
      then \eqref{eq:CB4}  holds.  
 \end{corollary}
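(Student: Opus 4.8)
The plan is to deduce from the volume condition \eqref{eq:CB5} the capacity condition \eqref{eq:CB3} of Theorem \ref{th:CapBerg_2}, but with the exponent $\beta$ there replaced by
\[
\beta':=n(\beta-1)+1,
\]
which satisfies $\beta'>1$ since $\beta>1$; granting this for every $z\in\Omega$, Theorem \ref{th:CapBerg_2} gives \eqref{eq:CB4} for $z$ near $\partial\Omega$, which is the assertion. This is precisely the $\beta>1$ analogue of case $(b)$ of Theorem \ref{th:KernelEstimate}, and the mechanism is the same: an integral-geometric passage from a lower bound on the volume of $B(\zeta,r)\cap\Omega^c$ to a lower bound on the logarithmic capacity of $\Omega^c$ intersected with a suitable complex line through $z$. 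For $z$ bounded away from $\partial\Omega$ there is essentially nothing to prove: running the argument below at the fixed scale $r=r_0$ produces a complex line $\mathcal L_z\ni z$ for which $\mathcal C_l\big(\mathcal L_z\cap B(z,2\,\mathrm{diam}\,\Omega)\cap\Omega^c\big)$ is at least a fixed positive constant, and since $\delta_\Omega(z)\le\mathrm{diam}\,\Omega$ this exceeds $\varepsilon'\delta_\Omega(z)^{\beta'}$ once $\varepsilon'$ is chosen small. So the real content is the regime $\delta_\Omega(z)\to0$.

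Fix $z\in\Omega$ with $\delta:=\delta_\Omega(z)$ small and choose $\zeta\in\partial\Omega$ with $|z-\zeta|=\delta$. Then $B(\zeta,\delta)\subset B(z,2\delta)$, and for $\delta\le r_0$ the set $E:=B(\zeta,\delta)\cap\Omega^c$ satisfies $|E|\ge\varepsilon|B(\zeta,\delta)|^\beta=c_1\,\delta^{2n\beta}$ with $c_1=c_1(n,\beta,\varepsilon)>0$. Next I would foliate $\mathbb C^n\setminus\{z\}$ by the complex lines through $z$: writing a point as $z+\xi$, with $\xi$ a linear coordinate on the complex line $\mathcal L_\ell$ indexed by $\ell\in\mathbb{CP}^{n-1}$, the Lebesgue measure splits as $dV(z+\xi)=c_n\,|\xi|^{2n-2}\,dA(\xi)\,d\mu(\ell)$, where $dA$ is area on $\mathcal L_\ell$ and $\mu$ is a fixed multiple of the Fubini--Study measure, of finite total mass. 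Since $|\xi|<2\delta$ on $E$,
\[
c_1\,\delta^{2n\beta}\le|E|\le c_n(2\delta)^{2n-2}\int_{\mathbb{CP}^{n-1}}\mathrm{area}\big(\mathcal L_\ell\cap E\big)\,d\mu(\ell),
\]
so there is a complex line $\mathcal L_z\ni z$ with $\mathrm{area}\big(\mathcal L_z\cap E\big)\ge c_2\,\delta^{2n(\beta-1)+2}$, $c_2=c_2(n,\beta,\varepsilon)>0$.

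Finally I would invoke the classical isoperimetric inequality for logarithmic capacity, $\mathcal C_l(F)\ge(|F|/\pi)^{1/2}$ for compact $F\subset\mathbb C$ (the disc being extremal among compact sets of prescribed area; cf.\ P\'olya--Szeg\H{o}). Applied inside $\mathcal L_z\cong\mathbb C$ to the closure of $\mathcal L_z\cap E$, together with the monotonicity of $\mathcal C_l$ and $\mathcal L_z\cap E\subset\mathcal L_z\cap B(z,2\delta)\cap\Omega^c$, this yields
\[
\mathcal C_l\big(\mathcal L_z\cap B(z,2\delta)\cap\Omega^c\big)\ \ge\ \mathcal C_l\big(\mathcal L_z\cap E\big)\ \ge\ c_3\,\delta^{\,n(\beta-1)+1},
\]
i.e.\ \eqref{eq:CB3} holds with $\alpha=2$ and exponent $\beta'=n(\beta-1)+1>1$; Theorem \ref{th:CapBerg_2} then delivers \eqref{eq:CB4}. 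I do not anticipate a genuine obstacle: the one step requiring care is the decomposition of $dV$ along complex lines through $z$ (a coarea computation in the chart $\xi\mapsto z+\xi$, producing the weight $|\xi|^{2n-2}$ and the $\mathbb{CP}^{n-1}$-factor), after which it is pure bookkeeping of exponents. Both inequalities used are exactly those behind case $(b)$ of Theorem \ref{th:KernelEstimate}, here carrying the additional power furnished by $\beta>1$.
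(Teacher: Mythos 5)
Your argument is correct and leads to the same conclusion, but the slicing step is genuinely different from the paper's. The paper (mimicking its proof of Theorem~\ref{th:KernelEstimate}$(b)$) integrates over \emph{real} lines through $z$ in polar coordinates $dV=r^{2n-1}\,dr\,d\sigma$, bounds $r^{2n-1}\le(2\delta)^{2n-1}$ to extract a direction $\zeta_0$ with $|l_{z,\zeta_0}\cap E|_1\gtrsim\delta^{2n(\beta-1)+1}$, and then passes to the complex line $\mathcal L_z$ containing $l_{z,\zeta_0}$ via the one-dimensional estimate $\mathcal C_l(F)\ge|F|_1/4$ (Ransford, Thm.~5.3.2); this furnishes $\beta'=2n(\beta-1)+1$ in \eqref{eq:CB3}. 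You instead fibre over \emph{complex} lines through $z$ with the weight $|\xi|^{2n-2}$ (correct: writing $\xi=re^{i\theta}$, $r^{2n-1}\,dr\,d\sigma=|\xi|^{2n-2}\,dA(\xi)\,d\mu(\ell)$ along the Hopf fibration $\mathbb S^{2n-1}\to\mathbb{CP}^{n-1}$), extract a complex line with two-dimensional $\mathrm{area}(\mathcal L_z\cap E)\gtrsim\delta^{2n(\beta-1)+2}$, and convert to capacity by the P\'olya--Szeg\H{o} isoperimetric bound $\mathcal C_l(F)\ge(|F|/\pi)^{1/2}$, obtaining the smaller exponent $\beta'=n(\beta-1)+1$. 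Both satisfy $\beta'>1$ and feed into Theorem~\ref{th:CapBerg_2} to yield \eqref{eq:CB4}, so the final estimate is identical; your route gives a quantitatively sharper intermediate capacity bound (saving a factor $n(\beta-1)$ in the exponent), while the paper's keeps the argument strictly parallel to case $(b)$ by reusing the same real-line decomposition and the segment-capacity inequality. One small remark: \eqref{eq:CB5} is only hypothesized for $0<r\le r_0$, so the slicing argument as such applies only when $\delta_\Omega(z)\le r_0$; this suffices since \eqref{eq:CB4} is a statement about $z$ near $\partial\Omega$, and your preliminary paragraph handles the complementary regime anyway.
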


For the Bergman kernel,  the cerebrated Ohsawa-Takegoshi extension theorem \cite{OT} serves as a bridge passing from one-dimensional estimates to high dimensions.  Unfortunately,  such a powerful tool is not available for estimating other objects like the pluricomplex Green function or the Bergman metric/distance.  Thus we have to focus on bounded  domains $\Omega\subset \mathbb C$ in the sequel.     
    
Motivated  by the work of Carleson-Totik \cite{CarlesonTotik},  we give the following
 
   \begin{definition}
           Let $\varepsilon>0$ and $0<\lambda< 1$ be fixed. For every $a\in \partial \Omega$  we set 
                      \begin{eqnarray*}
           K_t(a) & := & \overline{\mathbb D_{t}(a)} - \Omega;\ \ \ \mathbb D_{t}(a):=\{z:|z-a|<t\} \\
                     \mathcal N_a(\varepsilon,\lambda) & := & \left\{n\in \mathbb Z^+: \mathcal C_l\left(K_{\lambda^n}(a)\right)\ge \varepsilon \lambda^{ n}\right\}\\
           \mathcal N_a^n(\varepsilon,\lambda) &:=&\mathcal N_a(\varepsilon,\lambda)\cap \{1,2,\cdots,n\}.
           \end{eqnarray*}
           We define the $(\varepsilon,\lambda)-$capacity density of $\partial \Omega$ at $a$ by
           $$
           \mathcal D_a(\varepsilon,\lambda):=\liminf_{n\rightarrow\infty} \frac{|\mathcal N_a^n(\varepsilon,\lambda)|}n.
           $$
           We define the weak and strong $(\varepsilon,\lambda)-$capacity density of $\partial \Omega$ by        
           $$
           \mathcal D_W(\varepsilon,\lambda)  :=  \liminf_{n\rightarrow\infty} \frac{\inf_{a\in \partial \Omega}|\mathcal N_a^n(\varepsilon,\lambda)|}n      
           $$
           and 
           \begin{eqnarray*}           
                      \mathcal D_S (\varepsilon,\lambda)  :=   \liminf_{n\rightarrow\infty} \frac{|\bigcap_{a\in \partial \Omega}\mathcal N_a^n(\varepsilon,\lambda)|}n
         \end{eqnarray*}
         respectively.
               \end{definition}                
                    
       It is easy to see  that $ \mathcal D_W (\varepsilon,\lambda)\ge  \mathcal D_S (\varepsilon,\lambda)$, and
        $\mathcal C_l(K_{t}(a))\ge \varepsilon t$,  $\forall\,t>0$,   implies $ \mathcal D_a(\varepsilon,\lambda)=1$. Recall that $\partial \Omega$ is said to be\/ {\it uniformly perfect} if $\inf_{a\in \partial \Omega} \mathcal C_l(K_{t}(a))\ge \varepsilon t$,  $\forall\,t>0$ (cf. \cite{Pommerenke}). Thus if $\partial \Omega$ is uniformly perfect then $\mathcal D_W (\varepsilon,\lambda)=\mathcal D_S (\varepsilon,\lambda)=1$ for some $\varepsilon>0$. 
 On the other hand, it was pointed out in \cite{CarlesonTotik} that the domain   
  $$
  \Omega= \mathbb D-\{0\}-\bigcup _{k=1}^\infty \left[2^{-2^{2k+1}},2^{-2^{2k}}\right]  
  $$  
  satisfies $D_W (\varepsilon,1/2)>0$ for some $\varepsilon>0$ while
   $\partial \Omega$ is non-uniformly perfect. Actually, one may verify that $D_S (\varepsilon,1/2)>0$.
   
  Let  $z_0$ be a fixed point in $\Omega$.   Carleson-Totik proved the following  quantitative analogue of the sufficient part of Wiener's criterion.  
   
   \begin{theorem}[cf. \cite{CarlesonTotik}]\label{th:CT}
   If\/ $ \mathcal D_W(\varepsilon,\lambda)>0$\/ for some $\varepsilon,\lambda$, then there exists $\beta>0$ such that the Green function $g_\Omega$ satisfies
           \begin{equation}\label{eq:CT}
           -g_\Omega(z,z_0)\lesssim \delta_\Omega(z)^\beta
           \end{equation}
           for all $z$ sufficiently close to $\partial \Omega$.
  \end{theorem}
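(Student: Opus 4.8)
The plan is to pass through the logarithmic potential of the upper-level set of $-g_\Omega(\cdot,z_0)$ and exploit a self-improving (iteration) mechanism driven by the capacity density hypothesis, in the spirit of the classical proof of the sufficiency in Wiener's criterion but quantified. Fix $z_0$ and put $u(z):=-g_\Omega(z,z_0)\ge 0$, which is harmonic and positive on $\Omega\setminus\{z_0\}$ and vanishes quasi-everywhere on $\partial\Omega$. For $a\in\partial\Omega$ close to $z$ (say $a$ a nearest boundary point) and a dyadic-type radius $t=\lambda^n\approx\delta_\Omega(z)$, the idea is to compare the value $u(z)$ with the supremum $M(t):=\sup_{\mathbb D_t(a)\cap\Omega}u$ using the harmonic measure of $\mathbb D_t(a)\setminus\Omega$, i.e. of the ``bad'' set $K_t(a)$, as seen from $z$. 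Concretely, one shows
\begin{equation}\label{eq:CTstep}
M(\lambda^{n+1})\le \left(1-c\,\omega_n\right)M(\lambda^{n})
\end{equation}
where $\omega_n$ is (a lower bound for) the harmonic measure of $K_{\lambda^n}(a)$ in the annular region $\mathbb D_{\lambda^n}(a)\setminus K_{\lambda^n}(a)$ evaluated near the inner circle $|z-a|=\lambda^{n+1}$, and $c>0$ is absolute. This is the standard ``oscillation decay under a Harnack-type/maximum-principle argument'' step.

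The second ingredient is the quantitative lower bound for this harmonic measure in terms of logarithmic capacity: whenever $n\in\mathcal N_a(\varepsilon,\lambda)$, i.e. $\mathcal C_l(K_{\lambda^n}(a))\ge\varepsilon\lambda^n$, the capacity is comparable to the radius, and a Beurling-type/capacity-mass estimate (or the explicit formula for harmonic measure of a condenser in terms of capacities, cf. the logarithmic capacity relative to a point) gives $\omega_n\ge\eta$ for some $\eta=\eta(\varepsilon,\lambda)>0$ independent of $n$ and $a$. For $n\notin\mathcal N_a(\varepsilon,\lambda)$ one simply uses the trivial bound $M(\lambda^{n+1})\le M(\lambda^n)$. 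Telescoping \eqref{eq:CTstep} from $n=1$ to $n=N$ then yields
\begin{equation}\label{eq:CTtele}
M(\lambda^{N})\le (1-c\eta)^{|\mathcal N_a^N(\varepsilon,\lambda)|}\,M(1)\lesssim (1-c\eta)^{|\mathcal N_a^N(\varepsilon,\lambda)|}.
\end{equation}
Here $M(1)$ (or $M$ at a fixed scale bounded away from $z_0$) is bounded by a constant depending only on $\Omega$ and $z_0$ via Harnack, provided we first restrict to $z$ in a fixed small neighborhood of $\partial\Omega$ so that $z_0$ stays outside the balls in play.

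Finally we convert the counting exponent into a power of $\delta_\Omega(z)$. By the definition of $\mathcal D_W(\varepsilon,\lambda)>0$, there is $\kappa>0$ with $|\mathcal N_a^N(\varepsilon,\lambda)|\ge\kappa N$ for all large $N$ and all $a\in\partial\Omega$. Taking $N$ with $\lambda^{N}\approx\delta_\Omega(z)$, i.e. $N\approx\log\delta_\Omega(z)/\log\lambda$, inequality \eqref{eq:CTtele} gives
\[
-g_\Omega(z,z_0)=u(z)\le M(\lambda^N)\lesssim (1-c\eta)^{\kappa N}=\exp\!\left(\kappa N\log(1-c\eta)\right)\lesssim \delta_\Omega(z)^{\beta},
\]
with $\beta:=\kappa\log(1-c\eta)/\log\lambda>0$. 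This is \eqref{eq:CT}.

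The main obstacle is the uniform harmonic-measure lower bound $\omega_n\ge\eta(\varepsilon,\lambda)$ driving \eqref{eq:CTstep}: one must show that a compact subset of $\overline{\mathbb D_t(a)}$ whose logarithmic capacity is at least $\varepsilon t$ always carries at least a fixed proportion of harmonic measure from the concentric circle of radius $\lambda t$, with a bound independent of the scale $t$, of the center $a$, and of the geometry of the set beyond its capacity. This is where the scaling invariance of logarithmic capacity ($\mathcal C_l(tE)=t\,\mathcal C_l(E)$) and a monotonicity/extremal-length argument for condensers must be invoked carefully; all the other steps are routine applications of Harnack's inequality and the maximum principle. A secondary technical point is handling the quasi-everywhere (rather than everywhere) vanishing of $g_\Omega$ on $\partial\Omega$ and the possible irregular boundary points, which is absorbed by working with the upper semicontinuous regularization and noting that polar sets are invisible to harmonic measure.
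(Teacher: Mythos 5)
Your proposal is correct and follows essentially the same strategy as the paper: an oscillation--decay iteration at dyadic scales, driven by a uniform lower bound on the capacity potential (equivalently, harmonic measure of the ``bad'' set) at each good scale $n\in\mathcal N_a(\varepsilon,\lambda)$, and then telescoped using $\mathcal D_W(\varepsilon,\lambda)>0$. The paper organizes this slightly differently. It first proves a general decay lemma (Lemma 3.1) for any bounded positive harmonic function $h$ vanishing n.e.\ on $\partial\Omega\cap\mathbb D_{r_0}$, obtained precisely from your one-step comparison between scales $t$ and $\alpha t$ (with $\alpha<1/16$) and then ``integrated'' over scales, which produces the integral bound \eqref{eq:G_0}; this lemma is then applied to the capacity potential $\phi_E$ of a fixed compact $E\ni z_0$ (Theorem 3.2), and the conclusion follows from $-g_\Omega(\cdot,z_0)\asymp\phi_E$ near $\partial\Omega$. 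Working with $\phi_E$ rather than directly with $-g_\Omega(\cdot,z_0)$ avoids the unboundedness at $z_0$ that you handle by hand, and makes the lemma reusable for Theorem 4.2. As for the ``main obstacle'' you flag --- the uniform lower bound $\omega_n\ge\eta(\varepsilon,\lambda)$ --- the paper resolves it explicitly: the inequality \eqref{eq:C_4''} gives $\phi_{K}(z)\ge \frac{\mathcal C_d(K,\Omega)}{2\pi}\inf_{\partial K}(-g_\Omega(\cdot,z))$, and the comparisons \eqref{eq:C_8}, \eqref{eq:C_9} between Dirichlet, Green and logarithmic capacities, together with scale invariance, convert $\mathcal C_l(K_{\lambda^n}(a))\ge\varepsilon\lambda^n$ into a lower bound for $\mathcal C_d$ that is uniform over $n$ and $a$. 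Two small technical points worth noting when you flesh out your version: the domain for the capacity potential at step $n$ should be taken strictly larger than $\mathbb D_{\lambda^n}(a)$ (the paper uses the whole unit disc after rescaling) so that $K_{\lambda^n}(a)$ is compactly contained in it; and the per-step decay constant $c$ in \eqref{eq:CTstep} is not absolute but depends on $\lambda$ and $\varepsilon$, which is harmless for the conclusion.
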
   
  
   \begin{remark}
        It is remarkable that  the converse of Theorem \ref{th:CT} holds under the additional condition that  $\Omega$ contains a fixed size cone with vertex at any $a\in \partial\Omega$
        $($cf. \cite{CarlesonTotik}$)$.
        \end{remark}

  Since the original proof in \cite{CarlesonTotik} is rather technical,  it seems worthwhile to give a more transparent approach.  Actually,  Theorem \ref{th:CT} turns out to be a simple consequence of a quantitative estimate of the capacity potential given in  Theorem \ref{th:basic},  which can be applied in more general context. 
   
  \begin{definition}
           For $\varepsilon>0$, $0<\lambda< 1$ and $\gamma>1$  we set 
           \begin{eqnarray*}
           \mathcal N_a(\varepsilon,\lambda,\gamma) &:=& \left\{n\in \mathbb Z^+: \mathcal C_l(K_{\lambda^{n}}(a))\ge \varepsilon \lambda^{\gamma n}\right\}\\
        \mathcal N_a^n (\varepsilon,\lambda,\gamma)   &:=& \mathcal N_a(\varepsilon,\lambda,\gamma)\cap  \{1,2,\cdots,n\}.          \end{eqnarray*}
           We define the $(\varepsilon,\lambda,\gamma)-$capacity density of $\partial \Omega$ at $a$ by
           $$
           \mathcal D_a(\varepsilon,\lambda,\gamma):=\liminf_{n\rightarrow\infty} \frac{\sum_{k\in \mathcal N_a^n(\varepsilon,\lambda,\gamma)} k^{-1}}{\log n}
           $$
           and the weak and strong $(\varepsilon,\lambda,\gamma)-$capacity densities of $\partial \Omega$ by
           $$
           \mathcal D_W(\varepsilon,\lambda,\gamma):=\liminf_{n\rightarrow\infty} \frac{\inf_{a\in \partial \Omega} \sum_{k\in \mathcal N_a^n(\varepsilon,\lambda,\gamma)} k^{-1}  }{\log n}
           $$
           and
           $$
            \mathcal D_S(\varepsilon,\lambda,\gamma):=\liminf_{n\rightarrow\infty} \frac{ \sum_{k\in \bigcap _{a\in \partial \Omega}\mathcal N_a^n(\varepsilon,\lambda,\gamma)} k^{-1} }{\log n}
          $$ 
          respectively.
                                \end{definition}
           
     Note that $ \mathcal D_W(\varepsilon,\lambda,\gamma)\ge \mathcal D_S(\varepsilon,\lambda,\gamma)$. If  $\mathcal C_l(K_t(a))\ge \varepsilon t^\gamma$,  $\forall\,t>0$,   for some $\varepsilon>0$ and $\gamma>1$, then
           $
            \mathcal D_a(\varepsilon,\lambda,\gamma)=1
                       $ 
                       in view of the following well-known formula
                       $$
                      \lim_{n\rightarrow \infty} \left( 1+\frac12+\cdots+\frac1n - \log n \right) = \text{Euler constant}.
                       $$   
                       
                       \begin{theorem}\label{th:logGreen}
                    \begin{enumerate}   
             \item[$(1)$]   If\/ $ \mathcal D_W(\varepsilon,\lambda,\gamma)>0$\/ for some $\varepsilon,\lambda,\gamma$, then there exists $\beta>0$ such that 
           \begin{equation}\label{eq:LG_1}
           -g_\Omega (z,z_0) \lesssim (-\log\delta_\Omega(z))^{-\beta}
           \end{equation}
           for all $z$ sufficiently close to $\partial \Omega$.
        \item[$(2)$]     
                    Suppose\/   $\inf_{a\in \partial \Omega} \mathcal C_l(K_t(a))\ge \varepsilon t^\gamma$ for some $\varepsilon>0$ and $\gamma>1$. For every $\tau<\frac1{\gamma-1}$ there exists $C>0$ such that  
           \begin{equation}\label{eq:LG_2}
            -g_\Omega (z,z_0) \le C (-\log \delta_\Omega(z))^{-\tau}
           \end{equation}
            for all $z$ sufficiently close to $\partial \Omega$.
            \end{enumerate}
                       \end{theorem}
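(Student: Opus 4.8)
The plan is to deduce both statements from the quantitative capacity--potential estimate of Theorem~\ref{th:basic}, exactly as Theorem~\ref{th:CT} was, by specializing its parameters; below I describe the underlying mechanism so that the plan stands on its own. Fix a point $z$ near $\partial\Omega$, set $\delta:=\delta_\Omega(z)$, and let $a\in\partial\Omega$ realize the distance, $|z-a|=\delta$. I would work with
$$
\mu(r):=\sup_{\Omega\cap\partial\mathbb D_r(a)}\bigl(-g_\Omega(\cdot,z_0)\bigr),\qquad 0<r\le\lambda^{j_0},
$$
where $j_0$ is a fixed integer so large that $\lambda^{j_0}<\tfrac12\operatorname{dist}(z_0,\partial\Omega)$; then $z_0\notin\mathbb D_{\lambda^{j_0}}(a)$, the maximum principle shows $\mu$ is nondecreasing in $r$ and dominates $-g_\Omega(\cdot,z_0)$ throughout $\Omega\cap\overline{\mathbb D_r(a)}$, and $\mu(\lambda^{j_0})\le M_0$ for a constant $M_0$ independent of $a$. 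The only general facts used are that $-g_\Omega(\cdot,z_0)$ is positive, harmonic off $z_0$, and that $\limsup_{w\to\zeta}(-g_\Omega(w,z_0))=0$ for quasi--every $\zeta\in\partial\Omega$.

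The heart of the matter is a \emph{one--step estimate}. For large $n$ let $P_n$ be the capacity potential of the condenser $(K_{\lambda^n}(a),\partial\mathbb D_{\lambda^{n-1}}(a))$: harmonic on $\Omega\cap\mathbb D_{\lambda^{n-1}}(a)$, equal to $1$ quasi--everywhere on $K_{\lambda^n}(a)$ and to $0$ on $\partial\mathbb D_{\lambda^{n-1}}(a)$. Since $K_{\lambda^n}(a)\supseteq\partial\Omega\cap\overline{\mathbb D_{\lambda^n}(a)}$ and $0\le P_n\le1$, comparing boundary values (polar exceptional sets being irrelevant, and $z_0$ lying outside) yields $-g_\Omega(\cdot,z_0)\le\mu(\lambda^{n-1})(1-P_n)$ on $\Omega\cap\mathbb D_{\lambda^{n-1}}(a)$. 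To bound $P_n$ from below on the circle $\partial\mathbb D_{\lambda^n}(a)$ I would compare with the logarithmic potential $U^{\nu_n}(\zeta)=\int\log\frac{1}{|\zeta-w|}\,d\nu_n(w)$ of the equilibrium measure $\nu_n$ of $K_{\lambda^n}(a)$: writing $c_n:=\mathcal C_l(K_{\lambda^n}(a))$, one has $U^{\nu_n}\le\log(1/c_n)$ everywhere with equality quasi--everywhere on $K_{\lambda^n}(a)$, while $K_{\lambda^n}(a)\subset\overline{\mathbb D_{\lambda^n}(a)}$ forces $U^{\nu_n}\ge\log\frac{1}{2\lambda^n}$ on $\partial\mathbb D_{\lambda^n}(a)$ and $U^{\nu_n}\le\log\frac{1}{\lambda^{n-1}}+\log\frac{1}{1-\lambda}$ on $\partial\mathbb D_{\lambda^{n-1}}(a)$. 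Rescaling $U^{\nu_n}$ to the condenser normalization and subtracting its harmonic extension from $\partial\mathbb D_{\lambda^{n-1}}(a)$ then gives, for $\lambda<\tfrac13$,
$$
\inf_{\partial\mathbb D_{\lambda^n}(a)}P_n\ \ge\ \theta_n:=\frac{\log\frac{1-\lambda}{2\lambda}}{\log(\lambda^{n-1}/c_n)}\,,
$$
whence $\mu(\lambda^n)\le(1-\theta_n)\,\mu(\lambda^{n-1})$, with $\mu(\lambda^n)\le\mu(\lambda^{n-1})$ holding trivially in any case.

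Next I would iterate. Let $N$ be defined by $\lambda^{N+1}\le\delta<\lambda^N$, so $-g_\Omega(z,z_0)\le\mu(\lambda^N)$ and $N\asymp\log(1/\delta)$; telescoping and using $1-t\le e^{-t}$,
$$
-g_\Omega(z,z_0)\ \le\ M_0\exp\Bigl(-\sum_{\substack{j_0<n\le N\\ n\in\mathcal N_a(\varepsilon,\lambda,\gamma)}}\theta_n\Bigr).
$$
On $\mathcal N_a(\varepsilon,\lambda,\gamma)$ we have $c_n\ge\varepsilon\lambda^{\gamma n}$, hence $\log(\lambda^{n-1}/c_n)\le\log\frac1\varepsilon+((\gamma-1)n+1)\log\frac1\lambda$ and thus $\theta_n\ge c/n$ for large $n$, with $c=c(\varepsilon,\lambda,\gamma)>0$. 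For part~(1), $\mathcal D_W(\varepsilon,\lambda,\gamma)>0$ furnishes $\inf_{a}\sum_{n\in\mathcal N_a^N(\varepsilon,\lambda,\gamma)}1/n\ge(\mathcal D_W(\varepsilon,\lambda,\gamma)-o(1))\log N$, so the exponent is $\gtrsim\log N$ and $-g_\Omega(z,z_0)\lesssim N^{-\beta}\lesssim(-\log\delta)^{-\beta}$ with $\beta=\tfrac12\,c\,\mathcal D_W(\varepsilon,\lambda,\gamma)$. (If the given $\lambda$ happens to be $\ge\tfrac13$, one uses instead the condensers $(K_{\lambda^n}(a),\partial\mathbb D_{\lambda^{n-p}}(a))$ for a fixed $p=p(\lambda)$ and replaces the telescoping by a routine selection of disjointly supported blocks, losing only a constant factor in $\beta$.) For part~(2), the stronger hypothesis $\inf_a\mathcal C_l(K_t(a))\ge\varepsilon t^\gamma$ makes \emph{every} scale admissible uniformly in $a$, so the exponent is $\ge\frac{\log\frac{1-\lambda}{2\lambda}}{(\gamma-1)\log(1/\lambda)}(\log N-O(1))$; since this coefficient tends to $\frac1{\gamma-1}$ as $\lambda\downarrow0$ and we are free to choose $\lambda$, given any $\tau<\frac1{\gamma-1}$ we fix $\lambda<\tfrac13$ so that the coefficient exceeds $\tau$, obtaining $-g_\Omega(z,z_0)\le C(-\log\delta)^{-\tau}$.

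The main obstacle is the sharp exponent in part~(2). Because the relative capacity $c_n/\lambda^n\ge\varepsilon\lambda^{(\gamma-1)n}$ tends to $0$, the per--scale trapping $\theta_n$ unavoidably decays like $1/n$, and to push $\tau$ up to (just below) $\frac1{\gamma-1}$ one is forced to place the outer plate of each condenser essentially at the neighbouring scale $\lambda^{n-1}$ and to evaluate on $\partial\mathbb D_{\lambda^n}(a)$, i.e.\ at the \emph{same} scale as the obstacle $K_{\lambda^n}(a)$; extracting a good lower bound for $P_n$ there from $U^{\nu_n}$ is the one point requiring care (it produces the discrepancy between $\log\frac{1-\lambda}{2\lambda}$ and $\log\frac1\lambda$, which is then absorbed by letting $\lambda\downarrow0$). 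Everything else --- the comparison principle for $-g_\Omega$, the telescoping, and the combinatorics of the sets $\mathcal N_a^n$ --- is routine, and indeed the whole scheme is subsumed by Theorem~\ref{th:basic} once its parameters are specialized as above.
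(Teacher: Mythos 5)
Your proposal is correct and, as you yourself note at the end, it is essentially the paper's approach unpacked: you reconstruct by hand the discrete one‑step comparison and telescoping that the paper has already packaged into the integral estimate of Theorem~\ref{th:basic} (via Lemma~\ref{lm:basic}), and then specialize exactly as the paper does.

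The only genuine (and purely cosmetic) differences are: (i) you work directly with $-g_\Omega(\cdot,z_0)$ rather than passing through the capacity potential $\phi_E$ and invoking $-g_\Omega(\cdot,z_0)\asymp\phi_E$; (ii) to get the per‑scale lower bound on the condenser potential you use the equilibrium (logarithmic) potential $U^{\nu_n}$, whereas Lemma~\ref{lm:basic} reaches the same bound through the identity \eqref{eq:C_4''} (flux/Green's formula) and \eqref{eq:C_8}--\eqref{eq:C_9}; and (iii) in part~(2) the parameter that is sent to $0$ to squeeze the exponent up to $\tfrac1{\gamma-1}$ is your scale ratio $\lambda$, whereas in the paper it is the auxiliary $\alpha$ in \eqref{eq:G_6} — but these play the same role (indeed the paper's $\frac{\log 1/(16\alpha)}{\log 1/\alpha}\to1$ as $\alpha\downarrow0$ matches your $\frac{\log\frac{1-\lambda}{2\lambda}}{\log(1/\lambda)}\to1$ as $\lambda\downarrow0$). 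One small caveat worth keeping in mind: for part~(1) the parameter $\lambda$ is fixed by the hypothesis $\mathcal D_W(\varepsilon,\lambda,\gamma)>0$, so your fallback device of intermediate condensers $(K_{\lambda^n},\partial\mathbb D_{\lambda^{n-p}})$ for $\lambda\ge\tfrac13$ is actually needed there (or, equivalently, one keeps the paper's free $\alpha$ independent of $\lambda$); for part~(2) the hypothesis is $\lambda$‑free, so you are indeed at liberty to choose $\lambda$ small.
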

                       
                       We also obtain the following lower bounds for the Bergman distance $d_B$.
    
     \begin{theorem}\label{th:LowDist}
     \begin{enumerate}
 \item[$(1)$] If\/ $ \mathcal D_S(\varepsilon,\lambda)>0$\/ for some $\varepsilon,\lambda$, then 
        \begin{equation}\label{eq:1.1}
         d_B(z_0,z) \gtrsim   |\log \delta_\Omega(z)|
       \end{equation}  
       for all $z$ sufficiently close to $\partial \Omega$.
 \item[$(2)$]  If\/ $ \mathcal D_W(\varepsilon,\lambda)>0$\/ for some $\varepsilon,\lambda$, then 
        \begin{equation}\label{eq:1.2}
         d_B(z_0,z) \gtrsim  \frac{|\log \delta_\Omega(z)|}{\log|\log \delta_\Omega(z)|}
       \end{equation}    
            for all $z$ sufficiently close to $\partial \Omega$.
            \item[$(3)$] If\/ $ \mathcal D_W(\varepsilon,\lambda,\gamma)>0$\/ for some $\varepsilon,\lambda,\gamma$, then 
        \begin{equation}\label{eq:1.3}
         d_B(z_0,z) \gtrsim  \log \log |\log \delta_\Omega(z)|
        \end{equation}   
        for all $z$ sufficiently close to $\partial \Omega$.
            \end{enumerate}
           \end{theorem}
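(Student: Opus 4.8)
The plan is to deduce all three estimates from a pointwise lower bound for the Bergman metric $\beta_\Omega$ of the planar domain $\Omega$, followed by an elementary integration. Recall $d_B(z_0,z)=\inf_\gamma\int_\gamma\beta_\Omega(z';dz')$; since $|\nabla\delta_\Omega|\le1$ a.e., an estimate of the form $\beta_\Omega(z')\ge\psi(\delta_\Omega(z'))$ valid for $z'$ near $\partial\Omega$ (with $\psi>0$ decreasing) gives
$$d_B(z_0,z)\ \ge\ \inf_\gamma\int_\gamma\psi(\delta_\Omega(z'))\,|d(\delta_\Omega\circ\gamma)|\ \ge\ \int_{\delta_\Omega(z)}^{c}\psi(s)\,ds$$
for $z$ close to $\partial\Omega$, and the change of variable $u=-\log s$ shows that $\psi(s)\asymp s^{-1}$ produces \eqref{eq:1.1}, $\psi(s)\asymp (s\log|\log s|)^{-1}$ produces \eqref{eq:1.2}, and $\psi(s)\asymp (s\,|\log s|\,\log|\log s|)^{-1}$ produces \eqref{eq:1.3}. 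Thus it suffices, in the three regimes, to establish these three pointwise metric bounds.

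For the metric bound I would proceed as with the Bergman kernel in this paper (Theorems~\ref{th:CapBerg}, \ref{th:CapBerg_2}): a weighted $\dbar$-estimate — solving $\dbar u=\dbar(\chi\cdot\ell)$, where $\chi$ is a cut-off adapted to the sublevel set $\{\zeta:g_\Omega(\zeta,z')<-1\}$ and $\ell$ is linear with $\ell(z')=0$, against a weight built from the Green function $g_\Omega(\cdot,z')$ in the manner of Donnelly--Fefferman/Ohsawa--Takegoshi — produces $f\in A^2(\Omega)$ with $f(z')=0$ and a controlled, large normalized derivative; together with the trivial bound $K_\Omega(z')\le K_{\mathbb D_{\delta_\Omega(z')}(z')}(z')=(\pi\delta_\Omega(z')^2)^{-1}$ and the identity $\beta_\Omega(z')^2=\sup\{|f'(z')|^2:\|f\|\le1,\,f(z')=0\}/K_\Omega(z')$, this yields
$$\beta_\Omega(z')\ \gtrsim\ \frac1{s(z')},\qquad s(z'):=\operatorname{diam}\{\zeta:g_\Omega(\zeta,z')<-1\}$$
(so $s(z')$ is comparable to the conformal radius of $\Omega$ at $z'$, and always $s(z')\gtrsim\delta_\Omega(z')$). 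In particular $\beta_\Omega(z')\gtrsim1/\delta_\Omega(z')$ as soon as $\Omega^c$ is \emph{thick} at scale $\asymp\delta_\Omega(z')$ near the nearest boundary point, since such thickness — a chunk of logarithmic capacity $\gtrsim\delta_\Omega(z')$ inside $\mathbb D_{C\delta_\Omega(z')}(z')$ — forces $s(z')\asymp\delta_\Omega(z')$.

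Whether, and at which scales, such thickness is present is governed by the capacity density, quantified by Theorem~\ref{th:basic} applied with pole at $z'$: working outward from the scale $\delta_\Omega(z')$, each $\varepsilon$-good scale at the relevant boundary point contracts the conformal radius by a definite factor, so $\{g_\Omega(\cdot,z')<-1\}$ is trapped once enough good scales have been passed. When $\mathcal D_S(\varepsilon,\lambda)>0$, a set of scales of positive density is good, and good simultaneously for every boundary point; hence $\beta_\Omega(z')\gtrsim1/\delta_\Omega(z')$ holds at a positive density of scales uniformly over $\partial\Omega$, and summing these contributions along any curve already forces $d_B(z_0,z)\gtrsim|\log\delta_\Omega(z)|$, i.e.\ \eqref{eq:1.1}. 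For $\mathcal D_W(\varepsilon,\lambda)>0$ the good scales still have positive density but are no longer common to all boundary points, so a minimizing curve can partly avoid them; estimating how much it can avoid — essentially, how far outward one must go before a good scale of the relevant boundary point is reached — inserts the factor $\log|\log\delta_\Omega(z)|$ and gives \eqref{eq:1.2}. For $\mathcal D_W(\varepsilon,\lambda,\gamma)>0$ each good scale only carries capacity $\lambda^{\gamma n}\ll\lambda^n$, so the conformal radius contracts very slowly, the governing Wiener-type sum converges at the rate of an iterated logarithm, and one is left with \eqref{eq:1.3}.

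The routine parts should be the weighted $\dbar$-step and the one-variable integration; the real obstacle is the last point — turning the three averaged densities (strong, weak, and $\gamma$-weak) into the correct contraction rates for the conformal radius, and, in the non-uniform cases, controlling precisely how much an optimal curve can dodge the favorable boundary scales. This bookkeeping is what separates the three rates.
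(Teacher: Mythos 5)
Your overall strategy---lower-bound the Bergman metric pointwise by a function of $\delta_\Omega$ and integrate---is not the route the paper takes, and as written it has serious gaps. The paper proves all three parts through the sublevel-set mechanism \eqref{eq:BG_2}: it shows that suitable points $z_{k_j}$ along a Bergman geodesic have pairwise disjoint Green sublevel sets $\{g_\Omega(\cdot,z_{k_j})\le -c\}$, so each consecutive pair contributes $\gtrsim1$ to the distance, and the counting of such points gives the three rates. Concretely, part~$(1)$ uses Proposition~\ref{prop:Green_2}, which traps $\{g_\Omega(\cdot,w)\le-c\}$ in the annular shell $\{\lambda^{k}<\delta_\Omega<\lambda^{k-1}\}$ when $k$ is a strong-density good scale; part~$(3)$ uses Proposition~\ref{prop:Green_1}, which traps the sublevel set between two power-law levels of the capacity potential $\phi_E$, leading to the recursion $c\,\phi_E(z_{k+1})^{\beta/(1+\beta)}=c^{-1}\phi_E(z_k)^{(1+\beta)/\beta}$ and hence to $m\asymp\log|\log\phi_E|$; and part~$(2)$ is not proved directly at all---it is deduced from the H\"older estimate of Theorem~\ref{th:CT} via Corollary~1.8 of \cite{ChenH-index}.

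The difficulties with your version are concrete. First, the pointwise bounds $\beta_\Omega(z')\ge\psi(\delta_\Omega(z'))$ that you feed into the integral cannot hold for \emph{all} $z'$ near $\partial\Omega$ under a density hypothesis: at scales $k\notin\mathcal N_a(\varepsilon,\lambda)$ the annulus $\overline{\mathbb D}_{\lambda^{k}}(a)\setminus\Omega$ can have arbitrarily small capacity, so there is no local thickness to force a large metric. Your integral $\int_{\delta_\Omega(z)}^c\psi(s)\,ds$ is therefore an overclaim; one must restrict to the good scales, which in the weak case depend on the boundary point nearest the curve and in the $(\varepsilon,\lambda,\gamma)$ case carry only $\lambda^{\gamma n}$ capacity. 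Second, for part~$(1)$ your ``summing good-scale contributions'' is plausible and essentially parallels the paper's disjoint-shell argument, but it still requires proving the metric (or distance) bound at good scales, and that is exactly what Proposition~\ref{prop:Green_2} with \eqref{eq:BG_2} furnishes; you give no substitute. Third, for parts~$(2)$ and~$(3)$ the claims ``how far a curve can dodge the favorable scales inserts $\log|\log\delta|$'' and ``the conformal radius contracts very slowly, giving an iterated logarithm'' are the entire content of the theorem, not corollaries of bookkeeping: you would need a quantitative estimate of the sublevel-set geometry (the analogue of Proposition~\ref{prop:Green_1}, including the H\"older-type oscillation estimate for $\phi_E$ borrowed from the argument in \cite{CG}) to turn the densities into the stated rates. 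Finally, the $\psi$ you posit for~$(2)$ and~$(3)$ are reverse-engineered from the answer; nothing in the proposal derives them, and at least for~$(2)$ the paper itself evidently does not see a route through a pointwise metric bound.

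\end{document}
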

           
           \begin{remark}
         $(1)$   In \cite{ChenEssay},  \eqref{eq:1.1} was verified  by a different method in case $\partial \Omega$ is uniformly perfect. 
         
         $(2)$ Estimate of type \eqref{eq:1.2} was first obtained by B\l ocki \cite{BlockiGreen} for bounded pseudoconvex domains with Lipschitz boundaries in $\mathbb C^n$ $($see also \cite{ChenH-index} and \cite{DO} for related results$)$. 
         
         $(3)$  We conjecture that \eqref{eq:1.3} can be improved to $ d_B(z_0,z) \gtrsim   \log |\log \delta_\Omega(z)|$.
            \end{remark}
           
    By now it becomes a standard method for lower bounds of Bergman functions by estimating the set 
    $
    \{g_\Omega(\cdot,z)\le -c\}
    $      
    for $c>0$,  where $g_\Omega(\cdot,z)$ stands for the (pluricomplex) Green function with pole at $z$.  Namely,  one has
    \begin{equation}\label{eq:BG_1}
    K_\Omega(z) \gtrsim_c | \{g_\Omega(\cdot,z)\le -c\}|^{-1},
    \end{equation}
     \begin{equation}\label{eq:BG_2}
    d_B(z,z')\gtrsim_c 1\ \ \text{whenever}\ \ \{g_\Omega(\cdot,z)\le -c\}\cap \{g_\Omega(\cdot,z')\le -c\}=\emptyset,
    \end{equation}
    where $A\gtrsim_c B$ means $A\ge CB$ for some constant $C=C(c)>0$ (cf.  \cite{BlockiGreen}, \cite{BP}, \cite{BZ},  \cite{Chen99,CZ,ChenEssay,ChenH-index},  \cite{DO}, \cite{Herbort},  \cite{Ohsawa}).  In the proofs of Theorem \ref{th:CapBerg} and Theorem \ref{th:LowDist},  we use  \eqref{eq:BG_1} and \eqref{eq:BG_2} respectively.

    \section{Capacities}
In this section we shall review different notions of  capacities and present some basic properties of them.
Let $\Omega$ be a bounded domain  in $\mathbb C$ and $K\subset \Omega$  a compact (non-polor) set in $\Omega$.  We define the Dirichlet capacity $\mathcal C_d(K,\Omega)$ of $K$ relative to $\Omega$ by
\begin{equation}\label{eq:C_1}
\mathcal C_d(K,\Omega) = \inf_{\phi\in \mathcal L(K,\Omega)} \int_\Omega |\nabla \phi|^2 
\end{equation}                     
where $\mathcal L(K,\Omega)$ is the set of all locally Lipschitz functions $\phi$ on $\Omega$ with a compact support in $\overline{\Omega}$
 such that $0\le \phi\le 1$ and $\phi|_K=1$. If $\Omega = \mathbb C$,  then we write $\mathcal C_d(K)$ for $\mathcal C_d(K,\Omega)$.
 By the definition we have
   \begin{equation}\label{eq:C_2}
  K_1\subseteq K_2\ \ \text{and}\ \ \Omega_1\supseteq \Omega_2\ \  \Rightarrow\ \  \mathcal C_d(K_1,\Omega_1)\le \mathcal C_d(K_2,\Omega_2).
 \end{equation}
     In view of Dirichlet's principle, the infimum in \eqref{eq:C_1} is attained  at the function $\phi_{\min}$ which is exactly the Perron solution to the following (generalized) Dirichlet problem in $\Omega\backslash K$:
    \begin{equation}\label{eq:C_3}
    \Delta u =0;\ \ u =0\  \text{n.e.\ on\ } \partial \Omega;\ \ u=1\ \text{n.e.\ on\ } \partial K.
    \end{equation}
    We call $\phi_{\min}$ the\/ {\it capacity potential}\/ of $K$ relative to $\Omega$.  In case $\partial \Omega$ and $\partial K$ are both $C^2-$smooth, integration by parts gives  
    \begin{eqnarray}\label{eq:C_4}
    \mathcal C_d(K,\Omega) & = & \int_\Omega |\nabla \phi_{\min}|^2 = \int_{\Omega\backslash K} |\nabla \phi_{\min}|^2\nonumber\\
    & = & -\int_{\Omega\backslash K} \phi_{\min}\,\Delta \phi_{\min} +\int_{\partial (\Omega\backslash K)} \phi_{\min} \frac{\partial \phi_{\min}}{\partial \nu} d\sigma\nonumber\\
    & = &  \int_{\partial  K} \frac{\partial \phi_{\min}}{\partial \nu} d\sigma=: - {\text{flux}}_{\partial K}\, \phi_{\min}
       \end{eqnarray}
    where $\nu$ is the outward unit normal vector fields on $\partial (\Omega\backslash K)$. By the Hopf lemma we conclude that $\partial \phi_{\min}/\partial \nu\ge 0$ holds on $\partial K$.
    
   Let $g_\Omega(z,w)$ be the (negative) Green function on $\Omega$. Let $z\in \Omega\backslash K$ be given. Since $\Delta g_\Omega(\cdot,z) =2\pi \delta_z$, where $\delta_z$ stands for  the Dirac measure at $z$, we infer from Green's formula that
    \begin{eqnarray}\label{eq:C_4'}
    2\pi \phi_{\min}(z) & = & \int_{\Omega\backslash K} \phi_{\min}\,\Delta g_\Omega(\cdot,z)= \int_{\Omega\backslash K} g_\Omega(\cdot,z)\,\Delta \phi_{\min}\nonumber\\
    && + \int_{\partial (\Omega\backslash K)} \phi_{\min}\,\frac{\partial g_\Omega(\cdot,z)}{\partial \nu} d\sigma
    - \int_{\partial (\Omega\backslash K)} g_\Omega(\cdot,z)\,\frac{\partial \phi_{\min}}{\partial \nu} d\sigma\nonumber\\
    & = & \int_{\partial K}   \frac{\partial g_\Omega(\cdot,z)}{\partial \nu} d\sigma - \int_{\partial  K} g_\Omega(\cdot,z)\,\frac{\partial \phi_{\min}}{\partial \nu} d\sigma\nonumber\\
    & = &   - \int_{\partial  K} g_\Omega(\cdot,z)\,\frac{\partial \phi_{\min}}{\partial \nu} d\sigma
       \end{eqnarray}
    because $g_\Omega(\cdot,z)$ is harmonic on $K$.  This equality combined with \eqref{eq:C_4} gives  the following fundamental inequality which connects the capacity, Green's function and the capacity potential: 
    \begin{equation}\label{eq:C_4''}
  \frac{\mathcal C_d(K,\Omega)}{2\pi}\, \inf_{\partial K}(-g_\Omega(\cdot,z))\le  \phi_{\min}(z)\le \frac{\mathcal C_d(K,\Omega)}{2\pi}\,\sup_{\partial K}(-g_\Omega(\cdot,z)),\ \ z\in \Omega\backslash K.
        \end{equation}
        Since $\Omega\backslash K$ can be exhausted by bounded domains with smooth boundaries, we conclude by passing to  a standard limit process that the same inequality holds for every compact set $K$.
        
    For a finite Borel measure $\mu$ on $\mathbb C$ whose support is contained in  $K$  we define its Green potential relative to $\Omega$ by 
    $$
    p_{\mu}(z) = \int_\Omega g_\Omega(z,w) d\mu(w), \ \ z\in \Omega. 
    $$
    Clearly, $p_\mu$ is negative, subharmonic on $\Omega$,  harmonic on $\Omega\backslash K$, and satisfies $p_\mu(z)=0$ n.e.  on  $ \partial \Omega$.  Given $\phi\in C^\infty_0(\Omega)$ we have
    \begin{eqnarray*}
    \int_\Omega p_\mu \Delta \phi dV & = & \int_\Omega \left[\int_\Omega g_\Omega(z,w) d\mu(w) \right] \Delta\phi(z)dV(z)\\
    & = & \int_\Omega \left[\int_\Omega g_\Omega(z,w) \Delta\phi(z)dV(z) \right]   d\mu(w) \ \ (\text{Fubini's theorem})\\
     & =  &  \int_\Omega \left[\int_\Omega \Delta g_\Omega(z,w) \phi(z)dV(z) \right]   d\mu(w)\ \ (\text{Green's formula})\\
     & = & \int_\Omega 2\pi \phi(w) d\mu(w).
     \end{eqnarray*}
     Thus we obtain
     \begin{equation}\label{eq:C_5}
     \Delta p_\mu =2\pi \mu
     \end{equation}
     in the sense of distributions. 
      The Green energy $I(\mu)$ of $\mu$ is given by
    $$
    I(\mu):= \int_\Omega p_\mu d\mu = \int_\Omega \int_\Omega g_\Omega(z,w) d\mu(z)d\mu(w). 
    $$
     By \eqref{eq:C_5} we have
     \begin{equation}\label{eq:C_6}
     I(\mu) = \frac1{2\pi} \int_\Omega p_\mu\, \Delta p_\mu=-\frac1{2\pi} \int_\Omega |\nabla p_\mu|^2.
     \end{equation}   
        Every compact set $K$ has an\/ {\it equilibrium measure} $\mu_{\max}$, which maximizes $I(\mu)$ among all Borel probability measures $\mu$ on $K$. A fundamental  theorem of Frostman states that  
        \begin{enumerate}
        \item[$(1)$] $p_{\mu_{\max}}\ge I(\mu_{\max})$ on $\Omega$;
     \item[$(2)$] $p_{\mu_{\max}}=I(\mu_{\max})$ on $K\backslash E$ for some $F_\sigma$ polor set $E\subset \partial K$. 
     \end{enumerate}
     
   By the uniqueness of the solution of the (generalized) Dirichlet problem we have
   \begin{equation}\label{eq:C_7}
   \phi_{\min}= p_{\mu_{\max}}/ I(\mu_{\max}).
     \end{equation}
    We define the Green capacity $\mathcal C_{g}(K,\Omega)$ of $K$ relative to $\Omega$ by
    $$
   \mathcal  C_{g}(K,\Omega):= e^{I(\mu_{\max})}.
    $$  
    It follows from \eqref{eq:C_6} and \eqref{eq:C_7} that 
    \begin{equation}\label{eq:C_8}
    \frac{\mathcal C_d(K,\Omega)}{2\pi}=-\frac1{\log \mathcal C_g(K,\Omega)}.
    \end{equation}
    
    Analogously, we may define the logarithmic capacity $\mathcal C_l(K)$ of $K$ by    
    $$
    \log \mathcal C_l(K):=\sup_\mu \int_\mathbb C \int_\mathbb C \log |z-w| d\mu(z) d\mu(w)
    $$
    where the supremum is taken over all Borel propability measures $\mu$ on $\mathbb C$ whose support is contained in $K$. Let $R$ be the diameter of $\Omega$ and set $d=d(K,\Omega)$. Since
    $$
    \log |z-w|/R \le g_\Omega(z,w)\le \log |z-w|/d,\ \ z,w\in K,
    $$
    we have
    \begin{equation}\label{eq:C_9}
   \log \mathcal C_l(K)-\log R \le \log \mathcal C_g(K,\Omega)\le \log \mathcal C_l(K)-\log d.
    \end{equation}
    
    \section{Estimates of the capacity potential}
    
   We first give a basic lemma as follows. 
    
    \begin{lemma}\label{lm:basic}
 Let $\Omega$ be a bounded domain in $\mathbb C$ with $0\in \partial \Omega$.   Let $0\le h\le 1$ be a harmonic function on $\Omega$ such that $h=0$ n.e. on $\partial \Omega \cap \mathbb D_{r_0}$ for some $r_0<1$.  For all $0<\alpha< 1/16$ and $r\le \alpha r_0$ we have 
    \begin{equation}\label{eq:G_0}
    \sup_{\Omega\cap \mathbb D_r} h \le \exp\left[-\frac{\log 1/(16\alpha)}{\log 1/\alpha} \int_{r}^{\alpha r_0} \left({t\log \frac{ t/\alpha}{2 \mathcal C_l(K_{ t})}} \right)^{-1} dt  \right]
    \end{equation}
    where $K_t:=\overline{\mathbb D}_t-\Omega$.
      \end{lemma}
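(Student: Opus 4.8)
The plan is to run a Beurling-type iteration over the scales $\alpha^{k}r_{0}$, $k\ge0$, estimating at each step how much $h$ is forced to decay when one passes from the disc $\mathbb{D}_{t}$ to the $\alpha$ times smaller disc $\mathbb{D}_{\alpha t}$; the fundamental inequality \eqref{eq:C_4''}, together with the comparisons \eqref{eq:C_8}--\eqref{eq:C_9} between $\mathcal{C}_{d}$, $\mathcal{C}_{g}$ and $\mathcal{C}_{l}$, is what turns each step into a statement about the logarithmic capacity $\mathcal{C}_{l}(K_{t})$. First I would extend $h$: since $0\le h\le1$ is harmonic on $\Omega$ with $\limsup_{z\to\zeta}h(z)\le0$ for all but a polar set of $\zeta\in\partial\Omega\cap\mathbb{D}_{r_{0}}$, the function equal to $h$ on $\Omega\cap\mathbb{D}_{r_{0}}$ and to $0$ on $\mathbb{D}_{r_{0}}\setminus\Omega$ is, after correction on a polar set, subharmonic on $\mathbb{D}_{r_{0}}$; denote it $\tilde h$. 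Then $0\le\tilde h\le1$, $\tilde h=0$ n.e. on $\Omega^{c}$, $\tilde h=h$ on $\Omega$, and $M(s):=\sup_{\Omega\cap\mathbb{D}_{s}}h$ is non-decreasing for $s\le r_{0}$; discarding the trivial case $h\equiv0$ we have $M(s)>0$.

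The heart of the matter is the one-step estimate: for every $t<r_{0}$,
\[
M(\alpha t)\ \le\ (1-\mu(t))\,M(t),\qquad \mu(t):=\frac{\mathcal{C}_{d}(K_{\alpha t},\mathbb{D}_{t})}{2\pi}\,\log\frac{1}{16\alpha}\;(\ge0),
\]
where $K_{\alpha t}=\overline{\mathbb{D}}_{\alpha t}-\Omega$ is a compact subset of $\mathbb{D}_{t}$ since $\alpha<1$. To prove it I would let $\phi_{\min}$ be the capacity potential of $K_{\alpha t}$ relative to $\mathbb{D}_{t}$ and compare the subharmonic $\tilde h$ with the harmonic $M(t)(1-\phi_{\min})$ on $\mathbb{D}_{t}\setminus K_{\alpha t}$: on $\partial\mathbb{D}_{t}$ one has $\tilde h\le M(t)=M(t)(1-\phi_{\min})$ n.e., and on $\partial K_{\alpha t}\subseteq\Omega^{c}$ one has $\tilde h=0\le M(t)(1-\phi_{\min})$ n.e., so the (generalized) maximum principle gives $\tilde h\le M(t)(1-\phi_{\min})$ on $\mathbb{D}_{t}\setminus K_{\alpha t}\supseteq\Omega\cap\mathbb{D}_{\alpha t}$. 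By the left half of \eqref{eq:C_4''}, $\phi_{\min}(z)\ge\frac{\mathcal{C}_{d}(K_{\alpha t},\mathbb{D}_{t})}{2\pi}\inf_{\zeta\in\partial K_{\alpha t}}(-g_{\mathbb{D}_{t}}(\zeta,z))$, and for $z\in\mathbb{D}_{\alpha t}$ the explicit disc Green function $g_{\mathbb{D}_{t}}(\zeta,z)=\log|t(\zeta-z)/(t^{2}-\bar z\zeta)|$ with $|z|,|\zeta|<\alpha t$ gives $-g_{\mathbb{D}_{t}}(\zeta,z)>\log\frac{1-\alpha^{2}}{2\alpha}\ge\log\frac{1}{16\alpha}>0$ — this is precisely where the hypothesis $\alpha<1/16$ is used. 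Hence $\phi_{\min}\ge\mu(t)$ on $\Omega\cap\mathbb{D}_{\alpha t}$, and evaluating the comparison there gives $M(\alpha t)=\sup_{\Omega\cap\mathbb{D}_{\alpha t}}\tilde h\le(1-\mu(t))M(t)$.

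Next I would pass from $\mathcal{C}_{d}$ to $\mathcal{C}_{l}$: applying \eqref{eq:C_8} and \eqref{eq:C_9} to $K_{\alpha t}\subset\mathbb{D}_{t}$ — whose diameter is $2t$, while $\mathcal{C}_{g}(K_{\alpha t},\mathbb{D}_{t})<1$ and $\mathcal{C}_{l}(K_{\alpha t})\le\alpha t<2t$, so the logarithms below are positive — gives
\[
\frac{\mathcal{C}_{d}(K_{\alpha t},\mathbb{D}_{t})}{2\pi}=\frac{1}{-\log\mathcal{C}_{g}(K_{\alpha t},\mathbb{D}_{t})}\ \ge\ \frac{1}{\log(2t/\mathcal{C}_{l}(K_{\alpha t}))},
\]
so $\mu(t)\ge\log(1/(16\alpha))/\log(2t/\mathcal{C}_{l}(K_{\alpha t}))$; sharper bounds for $g_{\mathbb{D}_{t}}$ and for the diameter term in \eqref{eq:C_9} are what refine the numerical constants into the exact shape in \eqref{eq:G_0}.

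It then remains to iterate. Writing $\psi:=\log(1/M)\ge0$, the one-step estimate reads $\psi(\alpha t)\ge\psi(t)+\mu(t)$ for all $t<r_{0}$, hence $\psi(\alpha^{k}t)\ge\psi(t)+\sum_{j=0}^{k-1}\mu(\alpha^{j}t)$. Since this holds for a whole continuum of $t$, I would integrate it against $dt/t$ over the fundamental domain $[\alpha r_{0},r_{0})$ of the dilation $t\mapsto\alpha t$: the telescoping sum collapses to $\int_{\alpha^{k}r_{0}}^{r_{0}}\mu(s)\,ds/s$, and bounding the left-hand side by $\psi(\alpha^{k}r_{0})\log(1/\alpha)$ via the monotonicity of $\psi$ yields $\log(1/M(\alpha^{k}r_{0}))\ge\frac{1}{\log(1/\alpha)}\int_{\alpha^{k}r_{0}}^{r_{0}}\mu(s)\,ds/s$. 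Substituting the lower bound for $\mu$, changing variables $s\mapsto\alpha s$ to turn $\mathcal{C}_{l}(K_{\alpha s})$ into $\mathcal{C}_{l}(K_{s})$, and using $M(r)\le M(\alpha^{k}r_{0})$ for $\alpha^{k}r_{0}\ge r>\alpha^{k+1}r_{0}$, one reaches \eqref{eq:G_0}. The main obstacle will be exactly this last passage: since $t\mapsto\mathcal{C}_{l}(K_{t})$ can jump between consecutive scales, the telescoping sum cannot be compared termwise with the integral, which is why one must average over a fundamental domain of the dilation rather than invoke a Riemann sum; the only other delicate (though routine) point is the careful treatment of the polar (``n.e.'') exceptional sets in the extension and in the maximum-principle comparison.
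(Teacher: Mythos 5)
Your proposal is correct in outline and arrives at the right kind of recursion, but it takes a genuinely different route from the paper, and it leaves a constant-tracking step as a claim rather than a verification.

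The paper never extends $h$ across $\partial\Omega$. It keeps $h$ on $\Omega\cap\mathbb D_t$ and compares it to the harmonic function $M(t)(1-\phi_{\alpha t})$, where $\phi_{\alpha t}$ is the capacity potential of $K_{\alpha t}$ relative to the \emph{fixed} unit disc $\mathbb D$. Because $\phi_{\alpha t}$ does not vanish on $\partial\mathbb D_t$, the paper must also bound $\phi_{\alpha t}$ from above there via $\sup_{\partial K_{\alpha t}}(-g_{\mathbb D}(\cdot,z))\le\log(4/t)$ for $|z|=t$; this produces the correction factor $1-\log(4/t)\cdot\mathcal C_d/(2\pi)$ in the denominator of \eqref{eq:G_5}, which, after invoking \eqref{eq:C_8}, miraculously collapses to $\log\bigl(t/(4\mathcal C_g(K_{\alpha t},\mathbb D))\bigr)$ and then to $\log\bigl(t/(2\mathcal C_l(K_{\alpha t}))\bigr)$ by \eqref{eq:C_9} with $R=\mathrm{diam}\,\mathbb D=2$. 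You instead extend $h$ by $0$ to a subharmonic $\tilde h$ on $\mathbb D_{r_0}$ and take the capacity potential relative to the \emph{scale-varying} disc $\mathbb D_t$; since that potential vanishes identically on $\partial\mathbb D_t$, the maximum-principle comparison becomes the clean multiplicative recursion $M(\alpha t)\le(1-\mu(t))M(t)$ with no correction factor. Your integration step (telescope, then average against $dt/t$ over a fundamental domain of $t\mapsto\alpha t$) plays the same role as the paper's integration from $r/\alpha$ to $r_0$. So: same spirit, cleaner one-step estimate, different comparison domain and a different normalization for $\mathcal C_d$.

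Two points deserve care. First, because $\mathrm{diam}\,\mathbb D_t=2t$, applying \eqref{eq:C_8}--\eqref{eq:C_9} to $\mathcal C_d(K_{\alpha t},\mathbb D_t)$ gives the denominator $\log\bigl(2t/\mathcal C_l(K_{\alpha t})\bigr)$ rather than the paper's $\log\bigl(t/(2\mathcal C_l(K_{\alpha t}))\bigr)$ — you are off by an additive $\log 4$. You flag this, but the fix is not automatic: it requires \emph{both} using the sharper lower bound $-g_{\mathbb D_t}(\zeta,z)\ge\log\frac{1-\alpha^2}{2\alpha}$ (not merely $\log\frac1{16\alpha}$) in the numerator \emph{and} the sharper effective diameter $t(1+\alpha^2)$ in \eqref{eq:C_9}, and then checking that, with $D:=\log\frac{t/\alpha}{2\mathcal C_l(K_t)}\ge\log\frac1{2\alpha}$, one has $\log\frac{1-\alpha^2}{2\alpha}\cdot D\ge\log\frac1{16\alpha}\cdot\bigl(D+\log(2(1+\alpha^2))\bigr)$; this does hold for $\alpha<1/16$, but it is a small lemma, not a cosmetic adjustment. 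Second, in the telescoping step your bound on $\int_{\alpha r_0}^{r_0}\psi(\alpha^k t)\,dt/t$ by monotonicity yields $\psi(\alpha^{k+1}r_0)\log(1/\alpha)$ on the left, not $\psi(\alpha^{k}r_0)\log(1/\alpha)$ as written — the monotonicity of $\psi$ runs the wrong way for the bound you state. This is only an off-by-one shift in the index and is harmless, but as written the inequality is reversed. Neither issue is fatal; with those two repairs your argument gives \eqref{eq:G_0}.
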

    
    \begin{proof}
    The idea of the proof comes from \cite{GW} (see also \cite{ChenHolder}). Let $\mathbb D$ be the unit disc.
     For $t<r_0$ and $|z|=t$ we have
    \begin{equation}\label{eq:G_1}
    \sup_{\partial K_{\alpha t}}(-g_{\mathbb D}(\cdot,z))\le \log 2+  \sup_{\partial K_{\alpha t}}(-\log|\cdot-z|) \le \log 2-\log |t-\alpha t|\le \log 4/t.
       \end{equation}
  Let $\phi_{\alpha t}$ be the capacity potential of $K_{\alpha t}$ relative to $\mathbb D$.  By \eqref{eq:C_4''} and \eqref{eq:G_1} we have
  $$
  \phi_{\alpha t}(z)\le (\log 4/t)\cdot\frac{\mathcal C_d(K_{\alpha t},\mathbb D)}{2\pi}\ \ \ \ \text{for}\ \ |z|=t.
  $$
  It follows that for $z\in \Omega\cap \partial \mathbb D_t$, 
  \begin{equation}\label{eq:G_2}
  (1-\phi_{\alpha t}(z)) \sup_{\Omega\cap \mathbb D_t} h \ge \left[1-\log \frac4t \cdot\frac{\mathcal C_d(K_{\alpha t},\mathbb D)}{2\pi}\right]h(z),
  \end{equation}
  while the same inequality holds for $z\in \partial \Omega\cap \mathbb D_t$, because $\lim_{z\rightarrow \zeta}h=0$ for  n.e. $\zeta\in \partial \Omega\cap \mathbb D_t$. By the (generalized) maximum principle,  \eqref{eq:G_2} holds on $\Omega\cap \mathbb D_t$. On the other hand, since for $|z|= \alpha t$ we have
  \begin{equation}\label{eq:G_3}
  \inf_{\partial K_{\alpha t}}(-g_{\mathbb D}(\cdot,z)) \ge \log 1/2+  \inf_{\partial K_{\alpha t}}(-\log|\cdot-z|) \ge \log 1/2-\log (2\alpha  t) =  \log \frac1{4\alpha  t}, 
  \end{equation}
  it follows from \eqref{eq:C_4''} that 
  \begin{equation}\label{eq:G_4}
  \phi_{\alpha t}(z)\ge \log \frac1{4\alpha t}\cdot \frac{\mathcal C_d(K_{\alpha t},\mathbb D)}{2\pi}\ \ \ \  \text{for}\ \ |z|=\alpha t.
    \end{equation}
   Substituting \eqref{eq:G_4} into \eqref{eq:G_2},   we have
    \begin{eqnarray}\label{eq:G_5}
    h(z) & \le & \sup_{\Omega\cap \mathbb D_t} h \cdot \frac{1-\log \frac1{4\alpha t}\cdot\frac{\mathcal C_d(K_{\alpha t},\mathbb D)}{2\pi}}{1-\log \frac4t \cdot\frac{\mathcal C_d(K_{\alpha t},\mathbb D)}{2\pi}}\nonumber\\
        &\le & \sup_{\Omega\cap \mathbb D_t} h \left(1+\frac{\log (16\alpha)\cdot\frac{\mathcal C_d(K_{\alpha t},\mathbb D)}{2\pi}}{1-\log \frac4t \cdot\frac{\mathcal C_d(K_{\alpha t},\mathbb D)}{2\pi}}\right)
            \end{eqnarray}
            for $z\in \Omega\cap \mathbb D_{\alpha t}$. Set $M(t):=\sup_{\Omega\cap \mathbb D_t} h$. It follows from \eqref{eq:G_5} and \eqref{eq:C_8} that 
      \begin{eqnarray*}
      \frac{\log M(t)}t-\frac{\log M(\alpha t)}t & \ge & \log\frac1{16\alpha}\left({t\log \frac{t}{4 \mathcal C_g(K_{\alpha t},\mathbb D)}} \right)^{-1}\\
      & \ge & \log\frac1{16\alpha} \left({t\log \frac{t}{2 \mathcal C_l(K_{\alpha t})}} \right)^{-1}\ \ \ \ (\text{by\ } \eqref{eq:C_9}).
               \end{eqnarray*}  
               Integration from $r/\alpha$ to $r_0$ gives    
               \begin{eqnarray*}
            && \log\frac1{16\alpha}  \int_{r/\alpha}^{r_0} \left[{t\log \frac{t}{2 \mathcal C_l(K_{\alpha t})}} \right]^{-1} dt\\
              & \le &   \int_{r/\alpha}^{r_0}   \frac{\log M(t)}t dt- \int_{r/\alpha}^{r_0}  \frac{\log M(\alpha t)}t dt\\
               & = &  \int_{r/\alpha}^{r_0}   \frac{\log M(t)}t dt- \int_{r}^{\alpha r_0}  \frac{\log M( t)}t dt\\
               & \le &   \int_{\alpha r_0}^{r_0}   \frac{\log M(t)}t dt  -\int_r^{r/\alpha}  \frac{\log M(t)}t dt\\
               &\le & (\log M(r_0)-\log M(r)) \log 1/\alpha,
                             \end{eqnarray*}
            because $M(t)$ is nondecreasing.  Thus \eqref{eq:G_0} holds because $M(r_0)\le 1$.
            \end{proof}  
                                   
            \begin{theorem}\label{th:basic}
            Fix a compact set $E$ in $\Omega$ with $\mathcal C_l(E)>0$. Let $\phi_{E}$ be the capacity potential of $E$ relative to $\Omega$.  Set $d=d(E,\partial \Omega)$. Then for all $0<\alpha<1/16$
             \begin{equation}\label{eq:G_6}
    \sup_{\Omega\cap \mathbb D_r} \phi_{E} \le \exp\left[-\frac{\log 1/(16\alpha)}{\log 1/\alpha} \int_{r}^{\alpha d} \left({t\log \frac{t/\alpha}{2 \mathcal C_l(K_{t})}} \right)^{-1} dt  \right].
    \end{equation}     
           \end{theorem}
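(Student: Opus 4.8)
The plan is to deduce Theorem~\ref{th:basic} directly from Lemma~\ref{lm:basic}, applied with $h=\phi_E$ and $r_0=d$. Two preliminary reductions make this possible. First, the inequality \eqref{eq:G_6} is invariant under the dilation $\Omega\mapsto\rho\Omega$, $E\mapsto\rho E$, $r\mapsto\rho r$ with $\rho>0$: the left-hand side is unchanged because the capacity potential is dilation invariant, while on the right-hand side the substitution $t=\rho s$, together with $\overline{\mathbb D}_{\rho s}\setminus\rho\Omega=\rho(\overline{\mathbb D}_s\setminus\Omega)$, the scaling law $\mathcal C_l(\rho A)=\rho\,\mathcal C_l(A)$, and $d(\rho E,\partial(\rho\Omega))=\rho\, d$, leaves the exponent unchanged. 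Since $\Omega$ is bounded and $0\in\partial\Omega$, we may therefore rescale so that $\Omega\subset\mathbb D_{1/4}$; then $d=d(E,\partial\Omega)\le\mathrm{diam}\,\overline\Omega\le\tfrac12<1$, so $r_0:=d$ is an admissible choice in Lemma~\ref{lm:basic}. Second, since $0\in\partial\Omega$, every $e\in E$ satisfies $|e|\ge d(e,\partial\Omega)\ge d$, hence $E\cap\mathbb D_d=\emptyset$; consequently $\phi_E$, the Perron solution of \eqref{eq:C_3} extended by $1$ on $E$, is harmonic on $\Omega\cap\mathbb D_d$, satisfies $0\le\phi_E\le 1$, and vanishes n.e.\ on $\partial\Omega$, in particular n.e.\ on $\partial\Omega\cap\mathbb D_d$.

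It remains to apply Lemma~\ref{lm:basic} with $h=\phi_E$ and $r_0=d$. Although the lemma is stated for $h$ harmonic on all of $\Omega$, its proof uses harmonicity of $h$ only through the generalized maximum principle on the subdomains $\Omega\cap\mathbb D_t$ with $t<r_0$; every other step is either an elementary bound $h\le M(t)$, the boundary vanishing of $h$ on $\partial\Omega\cap\mathbb D_t$, or an estimate involving $g_{\mathbb D}$ and the capacity potential $\phi_{\alpha t}$ of $K_{\alpha t}$ relative to $\mathbb D$ --- the latter being harmonic on $\Omega\cap\mathbb D_t$ because $K_{\alpha t}=\overline{\mathbb D}_{\alpha t}\setminus\Omega$ is disjoint from $\Omega$. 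With $r_0=d$, the function $h=\phi_E$ is harmonic on each $\Omega\cap\mathbb D_t$, $t<d$, by the previous paragraph, so the proof of Lemma~\ref{lm:basic} carries over without change and turns \eqref{eq:G_0} into precisely \eqref{eq:G_6}. (If $r\ge\alpha d$, the right-hand side of \eqref{eq:G_6} is $\ge 1\ge\sup_{\Omega\cap\mathbb D_r}\phi_E$, so the estimate is trivial and we may assume $r<\alpha d=\alpha r_0$, as Lemma~\ref{lm:basic} requires.)

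I do not foresee a genuine obstacle: all the analytic substance is already contained in Lemma~\ref{lm:basic}. The two points that do need care are the dilation normalization --- which is what permits the choice $r_0=d$, and hence the sharp upper limit $\alpha d$ in the integral, rather than some fixed $r_0<1$ giving only a constant upper limit --- and the remark that $0\in\partial\Omega$ forces $E$ to lie outside $\mathbb D_d$, so that $\phi_E$ is genuinely harmonic on the region $\Omega\cap\mathbb D_d$ where Lemma~\ref{lm:basic} does its work. Everything else is a direct quotation of the lemma.
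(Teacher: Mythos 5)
Your proof is correct and takes the same route as the paper, namely deducing Theorem~\ref{th:basic} from Lemma~\ref{lm:basic} applied to $h=\phi_E$ with $r_0=d$. The paper's own proof is extremely terse (it merely records that $\phi_E\to 0$ n.e.\ on $\partial\Omega$ and $0\le\phi_E\le 1$, then invokes the lemma), and you have filled in the two technical points it leaves implicit: that $\phi_E$ is harmonic only on $\Omega\setminus E$, which suffices since the lemma's argument uses harmonicity only on the sets $\Omega\cap\mathbb D_t$ with $t<r_0=d$ and $E\cap\mathbb D_d=\emptyset$ because $0\in\partial\Omega$; and that the lemma's hypothesis $r_0<1$ is not automatic, but is harmless by the dilation-invariance of \eqref{eq:G_6}, which you verify correctly via $\mathcal C_l(\rho A)=\rho\,\mathcal C_l(A)$ and the substitution $t=\rho s$.
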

           
           \begin{proof}
           The solution of the (generalized) Dirichlet problem gives $\lim_{z\rightarrow \zeta}\phi_E(z)=0$ for n.e. $\zeta\in \partial \Omega$, and the (generalized) maximum principle gives $0\le \phi_E\le 1$.  Thus Lemma \ref{lm:basic} applies.
           \end{proof}

    \section{Growth of the Green function}
           
          \begin{proof}[Proof of Theorem \ref{th:CT}]
           Since $ \mathcal D_W(\varepsilon,\lambda)>0$,  there exist $c>0$ and $n_0\in \mathbb Z^+$ such that 
           $$
           |\mathcal N_a^n(\varepsilon,\lambda)|\ge c n,\ \ \ \forall\,n\ge n_0\ \text{and}\ \ a\in \partial \Omega.
                      $$
                       Since $\mathcal N_a^n(\varepsilon,\lambda)$ is decreasing in $\varepsilon$, we may assume that $\varepsilon$ is as small as we want. 
           Note that for $n\gg N\gg 1$
        \begin{eqnarray}\label{eq:G_8'}
        \int_{\lambda^n}^{\lambda^N} \left({t\log \frac{t/\alpha}{2 \mathcal C_l(K_{t}(a))}} \right)^{-1} dt & \ge & \sum_{k\in \mathcal N_a^n(\varepsilon,\lambda)\backslash \mathcal N_a^N(\varepsilon,\lambda)}
        \int_{\lambda^{k}}^{\lambda^{k-1}} \left({t\log \frac{t/\alpha}{2 \mathcal C_l(K_{t}(a))}} \right)^{-1} dt\nonumber\\
        & \ge &  \sum_{k\in \mathcal N_a^n(\varepsilon,\lambda)\backslash \mathcal N_a^N(\varepsilon,\lambda)} \left(\log\frac1{2\lambda\varepsilon\alpha}\right)^{-1} \int_{\lambda^{k}}^{\lambda^{k-1}}  \frac{dt}t   \nonumber \\
                 & = &
         \log1/\lambda \cdot \left(\log\frac1{2\lambda\varepsilon\alpha}\right)^{-1} |\mathcal N_a^n(\varepsilon,\lambda)\backslash \mathcal N_a^N(\varepsilon,\lambda)|\nonumber\\
         &\ge &  \log1/\lambda \cdot \left(\log\frac1{2\lambda\varepsilon\alpha}\right)^{-1}\cdot\frac{cn}2\nonumber\\
         & = &    \frac{c}2\cdot   \left(\log\frac1{2\lambda\varepsilon\alpha}\right)^{-1}\cdot \log 1/\lambda^n.  
         \end{eqnarray}  
         Since for every $z$ there exists $n\in \mathbb Z^+$ such that $\lambda^n\le |z-a|\le \lambda^{n-1}$,  it follows from \eqref{eq:G_6} and \eqref{eq:G_8'} that 
         $$
         \phi_E(z)\lesssim |z-a|^\beta
         $$
         for suitable  constant $\beta>0$ which is independent of $a$.  Since $-g_\Omega(z,z_0)\asymp \phi_E(z)$ for all $z$ sufficiently close to $\partial \Omega$,  we conclude that  \eqref{eq:CT} holds.
         \end{proof}  
         
          \begin{proof}[Proof of Theorem \ref{th:logGreen}]
        $(1)$   Since $ \mathcal D_W(\varepsilon,\lambda,\gamma)>0$,  there exist $c>0$ and $n_0\in \mathbb Z^+$ such that 
           $$
          \sum_{k\in \mathcal N_a^n(\varepsilon,\lambda,\gamma)} k^{-1}  \ge c \log n,\ \ \ \forall\,n\ge n_0\ \ \text{and}\ \ a\in \partial \Omega.
                      $$
           Note that for $n\gg N\gg 1$,
        \begin{eqnarray*}
        \int_{\lambda^n}^{\lambda^N} \left({t\log \frac{t/\alpha}{2\mathcal C_l(K_{t}(a))}} \right)^{-1} dt 
        & \ge & \sum_{k\in \mathcal N_a^n(\varepsilon,\lambda,\gamma)\backslash \mathcal N_a^N(\varepsilon,\lambda,\gamma)}
        \int_{\lambda^{k}}^{\lambda^{k-1}} \left({t\log \frac{t/\alpha}{2\mathcal C_l(K_{t}(a))}} \right)^{-1} dt\\
        & \ge &  \sum_{k\in \mathcal N_a^n(\varepsilon,\lambda,\gamma)\backslash \mathcal N_a^N(\varepsilon,\lambda,\gamma)} \left(\log\frac{\lambda^{(1-\gamma)k-1}}{2 \varepsilon\alpha}\right)^{-1} \int_{\lambda^{k}}^{\lambda^{k-1}}  \frac{dt}t    \\
                 & \gtrsim & \sum_{k\in \mathcal N_a^n(\varepsilon,\lambda,\gamma)\backslash \mathcal N_a^N(\varepsilon,\lambda,\gamma)} k^{-1}\\
         & \gtrsim &   \log n 
         \end{eqnarray*}  
         where the implicit constants are independent of $a$.
      This combined with   \eqref{eq:G_6}  gives 
      $$
      \phi_E(z)\lesssim (-\log |z-a|)^{-\beta}
      $$
      for some constant $\beta>0$ independent of $a$, which in turn implies 
     \eqref{eq:LG_1}.
        
$(2)$   
            By \eqref{eq:G_6} we have for every $a\in \partial \Omega$
            \begin{eqnarray*}
    \sup_{\Omega\cap \mathbb D_r(a)} \phi_{E} & \le & \exp\left[-\frac{\log 1/(16\alpha)}{\log 1/\alpha} \int_{r}^{\alpha d} \left({t\log \frac{t/\alpha}{2\mathcal C_l(K_{t}(a))}} \right)^{-1} dt  \right]\\
    & \le & \exp\left[-\frac{\log 1/(16\alpha)}{\log 1/\alpha} \int_{r}^{\alpha d} \frac{dt} {t((\gamma-1)\log 1/t+ \log 1/(2\alpha \varepsilon) )}   \right]\\
    & \le & {\rm const}_\tau\, (-\log r)^\tau
        \end{eqnarray*}
        provided $\alpha$ sufficiently small, from which the assertion follows. 
                   \end{proof}
                   
                   \section{Lower bounds of the Bergman kernel}
\subsection{Proof of Theorem \ref{th:CapBerg}}  
           \begin{lemma}[cf. \cite{Grigoryan}]
           Let $\Omega$ be a bounded domain in $\mathbb C$ and $U$ a relatively compact open set in $\Omega$. For every $w\in U$ we have
         \begin{equation}\label{eq:G_12}
         \min_{\partial U} (-g_\Omega(\cdot,w))\le \frac{2\pi}{\mathcal C_d\left(\overline{U},\Omega\right)} \le \max_{\partial U} (-g_\Omega(\cdot,w)).
         \end{equation}   
           \end{lemma}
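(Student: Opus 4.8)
The plan is to derive \eqref{eq:G_12} from Frostman's theorem applied to $K:=\overline U$, with no integration by parts, using only \eqref{eq:C_7} --- which identifies $\phi_{\min}$ with the normalized Green potential $p_{\mu_{\max}}/I(\mu_{\max})$ --- and the conversion formula \eqref{eq:C_8} relating $\mathcal C_d(K,\Omega)$ to the Green energy $I(\mu_{\max})$. Write $K:=\overline U$ (a non-polar compact set, since $U$ is open and nonempty), and let $\mu:=\mu_{\max}$ be its equilibrium measure relative to $\Omega$ and $\phi:=\phi_{\min}$ its capacity potential.

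First I would check that $\mathrm{supp}\,\mu\subseteq\partial U$. By the second part of Frostman's theorem, $p_\mu=I(\mu)$ on $K$ off a polar set contained in $\partial K$, so in particular $p_\mu\equiv I(\mu)$ on the open set $K^{\circ}$; since $p_\mu$ is subharmonic on $\Omega$, \eqref{eq:C_5} then forces $\mu(K^{\circ})=0$, whence $\mathrm{supp}\,\mu\subseteq K\setminus K^{\circ}=\partial K$. For an open set $U$ one has $\partial\overline U\subseteq\partial U$, so indeed $\mathrm{supp}\,\mu\subseteq\partial U$. Next I would evaluate $p_\mu$ at the pole $w\in U$ in two ways. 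On the one hand, $w\in U\subseteq K^{\circ}$ gives $\phi(w)=1$, so \eqref{eq:C_7} yields $p_\mu(w)=I(\mu)$. On the other hand, using the symmetry $g_\Omega(w,\cdot)=g_\Omega(\cdot,w)$ together with $\mathrm{supp}\,\mu\subseteq\partial U$,
\[
p_\mu(w)=\int_{\partial U} g_\Omega(\cdot,w)\,d\mu ,
\]
and since $\mu$ is a probability measure and $g_\Omega(\cdot,w)$ is continuous on the compact set $\partial U$ (note $w\notin\partial U$), this average lies between $\min_{\partial U}g_\Omega(\cdot,w)$ and $\max_{\partial U}g_\Omega(\cdot,w)$. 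Comparing the two evaluations,
\[
\min_{\partial U}g_\Omega(\cdot,w)\ \le\ I(\mu)\ \le\ \max_{\partial U}g_\Omega(\cdot,w).
\]
Finally, \eqref{eq:C_8} together with $\mathcal C_g(K,\Omega)=e^{I(\mu)}$ gives $I(\mu)=-2\pi/\mathcal C_d(\overline U,\Omega)$; inserting this into the last chain and multiplying by $-1$ (which swaps $\min$ and $\max$) produces exactly \eqref{eq:G_12}.

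I do not foresee a real obstacle; the one step that needs a word of care is the inclusion $\mathrm{supp}\,\mu\subseteq\partial U$, since it is precisely what makes the right-hand side a genuine average over $\partial U$ --- without it one would only get the vacuous bound $\inf_{K}g_\Omega(\cdot,w)\le I(\mu)$, and $\inf_{K}g_\Omega(\cdot,w)$ may well be $-\infty$. If one prefers not to invoke Frostman, there is a hands-on alternative via fluxes: apply Green's second identity to the harmonic functions $\phi$ and $g_\Omega(\cdot,w)$ on $\Omega\setminus\overline U$, where both vanish n.e.\ on $\partial\Omega$ and $\phi\equiv1$ on $\partial U$, and combine $\int_{\partial U}\frac{\partial g_\Omega(\cdot,w)}{\partial\nu}\,d\sigma=-2\pi$ (obtained from $\Delta g_\Omega(\cdot,w)=2\pi\delta_w$ by excising a small disc about $w$) with $\int_{\partial U}\frac{\partial\phi}{\partial\nu}\,d\sigma=\mathcal C_d(\overline U,\Omega)$ and $\frac{\partial\phi}{\partial\nu}\ge0$ (the Hopf lemma, as in \eqref{eq:C_4}); this yields $\int_{\partial U}(-g_\Omega(\cdot,w))\,\frac{\partial\phi}{\partial\nu}\,d\sigma=2\pi$, hence \eqref{eq:G_12} once more. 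The Frostman route is the one I would write up, since it sidesteps the smooth-exhaustion bookkeeping that the flux computation would require.
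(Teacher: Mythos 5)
Your proof is correct, and it takes a genuinely different route from the paper's. The paper sandwiches $\overline U$ between two sublevel sets $F_{c}=\{-g_\Omega(\cdot,w)\ge c\}$ using the maximum principle (with $c$ the min and max of $-g_\Omega(\cdot,w)$ over $\partial U$), then shows $\mathcal C_d(F_c,\Omega)=2\pi/c$ exactly by identifying $-c^{-1}g_\Omega(\cdot,w)$ with the capacity potential of $F_c$ and computing its flux across $\partial\Omega$, after which monotonicity \eqref{eq:C_2} of $\mathcal C_d$ finishes the proof. You instead work on the dual side: Frostman's theorem pins the equilibrium measure $\mu$ of $K=\overline U$ to $\partial U$, so evaluating the Green potential $p_\mu$ at the pole $w\in K^\circ$ yields $I(\mu)$ on one hand (via $\phi_{\min}(w)=1$ and \eqref{eq:C_7}) and a genuine average of $g_\Omega(\cdot,w)$ over $\partial U$ on the other, and \eqref{eq:C_8} converts $I(\mu)=-2\pi/\mathcal C_d(\overline U,\Omega)$. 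Your version avoids both the level-set sandwich and the exact capacity formula for $F_c$ (the latter would require, strictly speaking, that $F_{c_2}$ be a compact subset of $\Omega$, which needs a small word for irregular $\partial\Omega$ or a limiting argument), at the cost of invoking Frostman's theorem and the support argument $\mu(K^\circ)=0$; that support step is indeed the crux, as you note, and your reasoning (constancy of $p_\mu$ on $K^\circ$ combined with $\Delta p_\mu=2\pi\mu$ from \eqref{eq:C_5}) is sound. Your alternative flux sketch at the end is essentially the extension of \eqref{eq:C_4''} to $z=w\in K^\circ$, which is closer in spirit to the paper's machinery; the Frostman route you chose to write up is the cleaner of the two, and both are valid.
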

           
           \begin{proof}
          Since $g_\Omega(\cdot,w)$ is harmonic on $\Omega\backslash \overline{U}$ and vanishes n.e on $\partial \Omega$, it follows from  the maximum principle that
           $$
           \sup_{\Omega\backslash U} (-g_\Omega(\cdot,w)) =  \max_{\partial U} (-g_\Omega(\cdot,w))\ \ \text{and}\ \    \inf_{ \overline{U}} (-g_\Omega(\cdot,w)) =  \min_{\partial U} (-g_\Omega(\cdot,w)).
                   $$
                   Then we have
                   $$
               \left\{-g_\Omega(\cdot,w)\ge \max_{\partial U} (-g_\Omega(\cdot,w))\right\} \subset   \overline{U}\subset \left\{-g_\Omega(\cdot,w)\ge \min_{\partial U} (-g_\Omega(\cdot,w))\right\}.
                   $$
                   Set $F_c=\{-g_\Omega(\cdot,w)\ge c\}$. It suffices to show
                   $$
                   \mathcal C_d(F_c,\Omega) =2\pi/c.
                   $$ 
                   Indeed, the function $\phi_c:=-c^{-1} g_\Omega(\cdot,w)$ is the capacity potential of $F_c$ relative to $\Omega$. Thus we have
                   $$
                 \mathcal  C_d(F_c,\Omega) = -\text{flux}_{\partial F_c}\, \phi_c= -\text{flux}_{\partial \Omega}\, \phi_c = c^{-1} \text{flux}_{\partial \Omega}\, g_\Omega(\cdot,w)=2\pi/c.
                   $$
                   where the second and last equalities follow from Green's formula.
                  \end{proof}
                  
                  \begin{lemma}\label{lm:Green_2}
               Let $\Omega$ be a bounded domain in $\mathbb C$ with $0\in \partial \Omega$. Let $\beta> \alpha>0$. Suppose $\mathcal C_l(K_r)\ge \varepsilon r$ for some $\varepsilon,r>0$.  There exists a positive number $c$ depending only on $\alpha,\beta,\varepsilon$  such that for every point $w$ with $|w|=\beta r$ and               
                 $\mathbb D_{2\alpha r}(w)\subset \Omega$ we have
               \begin{equation}\label{eq:G_13}
               \{ g_\Omega(\cdot,w)\le -c\}\subset \mathbb D_{\alpha r}(w).
               \end{equation} 
                  \end{lemma}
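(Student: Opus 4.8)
The plan is to turn the desired inclusion into a uniform pointwise bound for $-g_\Omega(\cdot,w)$ off the disc $\mathbb D_{\alpha r}(w)$, and then close by the maximum principle. Since $\overline{\mathbb D_{\alpha r}(w)}\subset\mathbb D_{2\alpha r}(w)\subset\Omega$, the function $-g_\Omega(\cdot,w)$ is positive and harmonic on $\Omega\setminus\overline{\mathbb D_{\alpha r}(w)}$ and vanishes n.e.\ on $\partial\Omega$, so
\[
\sup_{\Omega\setminus \mathbb D_{\alpha r}(w)}\bigl(-g_\Omega(\cdot,w)\bigr)=\max_{\partial \mathbb D_{\alpha r}(w)}\bigl(-g_\Omega(\cdot,w)\bigr).
\]
Thus it suffices to produce $c_0=c_0(\alpha,\beta,\varepsilon)$ with $-g_\Omega(\cdot,w)\le c_0$ on $\partial\mathbb D_{\alpha r}(w)$; then $\{g_\Omega(\cdot,w)\le -c\}\subset\mathbb D_{\alpha r}(w)$ for any $c>c_0$, e.g.\ $c=c_0+1$. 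Rescaling $\Omega\mapsto r^{-1}\Omega$, which leaves $g_\Omega$ unchanged and multiplies $\mathcal C_l$ and all distances by $r^{-1}$, I may assume $r=1$.

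The key point is that it is the hole $K_1=\overline{\mathbb D}\setminus\Omega$, not the diameter of $\Omega$, that controls $-g_\Omega(\cdot,w)$ away from $w$, and the relevant comparison is with the complement of that hole. Since $K_1\cap\Omega=\emptyset$ and $\Omega$ is connected, $\Omega$ lies in one component $D$ of $\mathbb C\setminus K_1$; as $\mathbb C\setminus D\supset K_1$ is non-polar ($\mathcal C_l(K_1)\ge\varepsilon>0$), $D$ is Greenian, and domain monotonicity of the (negative) Green function gives $-g_\Omega(z,w)\le -g_D(z,w)$ for $z\in\Omega$. Moreover $\mathbb D_{2\alpha}(w)\subset\Omega\subset D$ forces $d(w,K_1)\ge 2\alpha$, so $\partial D\subset K_1\subset\overline{\mathbb D}$ stays at distance $\ge 2\alpha$ from $w$. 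Now I write $-g_D(z,w)=\log\frac1{|z-w|}+h(z)$, where $h$ is harmonic on $D$ and $h=\log|\cdot-w|$ n.e.\ on $\partial D$, and let $G=-\log\mathcal C_l(K_1)-U^{\mu_{K_1}}$ be the Green function of $\mathbb C\setminus K_1$ with pole at $\infty$, $\mu_{K_1}$ the equilibrium measure and $U^{\mu_{K_1}}$ its logarithmic potential; by Frostman's theorem $G\ge0$ on $\mathbb C\setminus K_1$ and $G=0$ n.e.\ on $K_1$ (hence $G\equiv0$ on any bounded component). Then $h-G$ is bounded and harmonic on $D$ — when $D$ is the unbounded component the two functions share the same $\log|z|$ growth at $\infty$, so the singularity there is removable, while $\log|\zeta-w|$ is bounded on $\partial D$ because $2\alpha\le|\zeta-w|\le 1+\beta$ for $\zeta\in\partial D$ — with boundary values $\log|\cdot-w|$ on $\partial D$, so the maximum principle yields $h\le G+\log(1+\beta)$ on $D$.

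It remains to bound $G$ on the region of interest. Since $\operatorname{supp}\mu_{K_1}\subset\overline{\mathbb D}$ has total mass $1$, $U^{\mu_{K_1}}(z)\ge\log\frac1{|z|+1}$, so $G(z)\le -\log\mathcal C_l(K_1)+\log(|z|+1)\le\log\frac1\varepsilon+\log(\alpha+\beta+1)$ whenever $|z|\le\alpha+\beta$. For $z$ with $|z-w|=\alpha$ one has $|z|\le\alpha+\beta$, so combining everything,
\[
-g_\Omega(z,w)\le-g_D(z,w)=\log\tfrac1\alpha+h(z)\le\log\tfrac1\alpha+\log(1+\beta)+\log\tfrac1\varepsilon+\log(\alpha+\beta+1)=:c_0(\alpha,\beta,\varepsilon).
\]
By the maximum principle this same bound holds on all of $\Omega\setminus\mathbb D_\alpha(w)$, and taking $c=c_0+1$ and undoing the rescaling proves the lemma.

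The routine estimates aside, the one place demanding care is the comparison in the third sentence of the middle paragraph: one must first recognize that the bound should come from $\mathbb C\setminus K_1$ (via monotonicity in the "expanding" direction, which is the opposite of a naive first guess), and then check that $h\le G+\log(1+\beta)$ is legitimate irrespective of whether $\mathbb C\setminus K_1$ is connected and whether the component $D$ containing $w$ is bounded or unbounded — the subtle points being the removable singularity of $h-G$ at $\infty$ and the two‑sided control of $\log|\cdot-w|$ on $\partial D$, both of which rest precisely on the hypotheses $\mathcal C_l(K_1)\ge\varepsilon$ and $\mathbb D_{2\alpha}(w)\subset\Omega$.
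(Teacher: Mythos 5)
Your proof is correct, but it goes by a genuinely different route than the paper's. The paper reduces the lemma to a lower bound on the condenser capacity $\mathcal C_d\left(\overline{\mathbb D_{\alpha r}(w)},\Omega\right)$: it invokes the Grigor'yan-type inequality \eqref{eq:G_12} together with Harnack's inequality on the annulus $\alpha r<|z-w|<2\alpha r$ to pass from a bound on the minimum of $-g_\Omega(\cdot,w)$ over $\partial\mathbb D_{\alpha r}(w)$ to a bound on the maximum, then estimates the condenser capacity by swapping plates, restricting to $K_r$, applying the conformal map $z\mapsto \alpha r/(z-w)$ onto $\mathbb D$, and finally using the Lipschitz contraction principle for $\mathcal C_l$. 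You instead bypass the capacity framework entirely: after scaling to $r=1$ you observe that only the boundary circle $\partial\mathbb D_\alpha(w)$ matters, majorize $-g_\Omega$ by $-g_D$ where $D$ is the component of $\mathbb C\setminus K_1$ containing $\Omega$ (domain monotonicity, in the less obvious ``enlarging'' direction), and then control the bounded-harmonic part $h$ of $-g_D$ by comparing $h-G$ with boundary data $\log|\cdot-w|$, $G$ being the Green function of $\mathbb C_\infty\setminus K_1$ at $\infty$, whose size is pinned down by Frostman's theorem and $\mathcal C_l(K_1)\ge\varepsilon$. The case analysis on whether $D$ is bounded or unbounded, the removable singularity of $h-G$ at $\infty$, and the two-sided bound $2\alpha\le|\zeta-w|\le 1+\beta$ on $\partial D$ are exactly the points that need care, and you handle them. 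Your argument is arguably more elementary — it needs neither \eqref{eq:G_12} nor Harnack nor conformal invariance of the Dirichlet energy, only domain monotonicity and Frostman's theorem — whereas the paper's proof stays within the condenser-capacity machinery it has already set up in Section 2 and so dovetails more naturally with the rest of the article.
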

                  
                  \begin{proof}
                  By \eqref{eq:G_12} and Harnack's inequality it suffices to show
                  \begin{equation}\label{eq:G_14}
                \mathcal   C_d\left(\overline{\mathbb D_{\alpha r}(w)},\Omega\right)\ge c'
                  \end{equation}
                  for some positive constant $c'$ depending only on $\alpha,\beta,\varepsilon$. By the definition  we see that 
                  \begin{eqnarray}\label{eq:G_15}
               \mathcal   C_d\left(\overline{\mathbb D_{\alpha r}(w)},\Omega\right) & = & \mathcal C_d\left(\Omega^c,\mathbb C_\infty-\overline{\mathbb D_{\alpha r}(w)}\right)\nonumber\\
                  & \ge & \mathcal C_d\left(K_r,\mathbb C_\infty-\overline{\mathbb D_{\alpha r}(w)}\right),
                                    \end{eqnarray}
       where $\mathbb C_\infty$ denotes the Riemann sphere.  
                                    Let us consider the conformal map
                                    $$
                                    T: \mathbb C_\infty-\overline{\mathbb D_{\alpha r}(w)}\rightarrow \mathbb D,\ \ \ z\mapsto 
                                    \frac{\alpha r}{z-w}.
                                                                        $$
                              Since the Dirichlet energy is invariant under conformal maps, it follows that
                              \begin{equation}\label{eq:G_16}
                         \mathcal    C_d\left(K_r,\mathbb C_\infty-\overline{\mathbb D_{\alpha r}(w)}\right) = \mathcal C_d(T(K_r),\mathbb D).
                              \end{equation}
                           Since 
                           $$
                           K_r\subset  \mathbb D_{(1+\beta) r}(w)-{\mathbb D_{2\alpha r}(w)},
                           $$
                            we have
                           $$
                           T(K_r)\subset \overline{\mathbb D}_{1/2}-\mathbb D_{\alpha/(1+\beta)},
                           $$
                            so that
                           \begin{eqnarray}\label{eq:G_17}
                       \mathcal    C_d(T(K_r),\mathbb D) =-\frac{2\pi}{\log \mathcal C_g(T(K_r),\mathbb D)} \ge \frac{2\pi}{\log 2-\log \mathcal C_l(T(K_r))}.                         \end{eqnarray}  
                           Since 
                           $$
                           |T^{-1}(z_1)-T^{-1}(z_2)|=\frac{\alpha r}{|z_1z_2|}\cdot |z_1-z_2|\le \frac{(1+\beta)^2r}{\alpha }\cdot |z_1-z_2|
                           $$ 
                           for all $z_1,z_2\in T(K_r)$, we infer from Theorem 5.3.1 in \cite{Ransford} that
                           $$
                           \mathcal C_l(T(K_r))\ge   \frac{\alpha}{(1+\beta)^2r}\cdot \mathcal C_l(K_r) \ge \frac{\alpha \varepsilon}{(1+\beta)^2}.
                                                  $$  
                                                  This combined with $\eqref{eq:G_15} \sim \eqref{eq:G_17}$ gives \eqref{eq:G_14}.                                                            
                  \end{proof}
                  
                  \begin{proof}[Proof of Theorem \ref{th:CapBerg}]
          Take $z^\ast\in \mathcal L_z\cap B(z,\alpha\delta_\Omega(z)) \cap \partial \Omega$.  Since 
          $$
          B(z,\alpha\delta_\Omega(z))\subset B(z^\ast,2\alpha \delta_\Omega(z)),
          $$
           we have
     $$
     \mathcal C_l\left( \mathcal L_z \cap B(z^\ast,2\alpha \delta_\Omega(z))\cap \Omega^c\right)\ge \frac{\varepsilon}{2\alpha}\cdot (2\alpha\delta_\Omega(z)). 
     $$
     By Lemma \ref{lm:Green_2}, there exists a constant  $c=c(\alpha,\varepsilon)>0$ such that
     $$
     \{g_{\mathcal L_z\cap \Omega}(\cdot,z)\le -c\}\subset \mathcal L_z\cap B(z,\delta_\Omega(z)/2).
     $$
     Hence it follows from \eqref{eq:BG_1} that
     $$
     K_{\mathcal L_z\cap \Omega}(z)\gtrsim |\{g_{\mathcal L_z\cap \Omega}(\cdot,z)\le -c\} |^{-1}\gtrsim \delta_\Omega(z)^{-2},
     $$
     where the implicit constants depend only on $\alpha,\varepsilon$.   This together with the   Ohsawa-Takegoshi extension theorem  yield  \eqref{eq:CB_2}.        
                  \end{proof}
    \subsection{Proof of Theorem \ref{th:KernelEstimate}}              
                  \begin{proof}[Proof of\/ $(b)$]
                   Take $z^\ast\in \partial \Omega$ such that $|z-z^\ast| = \delta_\Omega(z)$.  Since  $$
                   B(z^\ast,\delta_\Omega(z))\subset B(z,2\delta_\Omega(z)),
                   $$
                    it follows from \eqref{eq:CB_3} that
     $$
     |B(z,2\delta_\Omega(z))\cap \Omega^c|\ge C_n\varepsilon \delta_\Omega(z)^{2n},
     $$
     where $C_n$ stands for a constant depending only on  $n$.  We claim that there exists a real line  $l_z\ni z$,  such that
     \begin{equation}\label{eq:kl_3'}
     | l_z\cap B(z,2\delta_\Omega(z))\cap \Omega^c|_1 \ge \varepsilon' \delta_\Omega(z),
     \end{equation}
     where $\varepsilon'=\varepsilon'(n,\varepsilon)>0$,  $|\cdot|_1$ stands for the $1-$dimensional   Lebesgue measure.  Indeed,  if we denote $E:=B(z,2\delta_\Omega(z))\cap \Omega^c$ and $E_\zeta:=E\cap l_{z,\zeta}$,  where $\zeta\in \mathbb C^n$,  $l_{z,\zeta}=\{z+t\zeta:t\in \mathbb R\}$,   then
     \begin{eqnarray}\label{eq:Vol}
     |E| & = & \int_{\mathbb \zeta\in \mathbb S^{2n-1}} \int_{r: z+ r\zeta\in E} r^{2n-1}dr d\sigma(\zeta) \\
     & \le & 2^{2n-1} \delta_\Omega(z)^{2n-1} \int_{\mathbb \zeta\in \mathbb S^{2n-1}} |E_\zeta|_1 d\sigma(\zeta),\nonumber
     \end{eqnarray}
     where $\mathbb S^{2n-1}$ is the unit sphere in $\mathbb C^n$ and $d\sigma$ is the surface element.  Thus
     $$
     \int_{\mathbb \zeta\in \mathbb S^{2n-1}} |E_\zeta|_1 d\sigma(\zeta)\ge C_n 2^{1-2n}\varepsilon \delta_\Omega(z),
     $$
     so that there exists at least one point  $\zeta_0\in \mathbb S^{2n-1}$,  such that
     $$
     |E_{\zeta_0}|_1\ge \frac{C_n\varepsilon}{2^{2n}|\mathbb S^{2n-1}|} \cdot \delta_\Omega(z).
     $$
     Take $l_z=l_{z,\zeta_0}$ and $\varepsilon'= \frac{C_n\varepsilon}{2^{2n}|\mathbb S^{2n-1}|}$,  we get \eqref{eq:kl_3'}.  
     
     Let $\mathcal L_z$ be the complex line determined by $l_z$.  Then
     $$
     \mathcal C_l(\mathcal L_z\cap B(z,2\delta_\Omega(z))\cap \Omega^c)\ge \frac{|E_{\zeta_0}|_1}4\ge \frac{\varepsilon'}4\cdot\delta_\Omega(z),
     $$
  where the first inequality follows from Theorem 5.3.2 in \cite{Ransford}.   Thus Theorem \ref{th:CapBerg} applies.  
                  \end{proof}
                  
                  \begin{proof}[Proof of\/ $(d)$]
            By Theorem \ref{th:CapBerg},  it suffices to verify that there exists a complex line  $\mathcal L_z\ni z$,  such that
\begin{equation}\label{eq:kl_4}   
\mathcal C_l(\mathcal L_z\cap  B(z,2\delta_\Omega(z))\cap \Omega^c) \ge \delta_\Omega(z)/4.
\end{equation}    
     This follows from an  argument of Pflug-Zwonek \cite{PZ05}:  Take     
      $c_0>0$,  so that $\{\rho<c_0\}\subset\subset U$,  where $\rho$ is defined on an open set $U\supset \overline{\Omega}$.  Take $z^\ast\in \partial \Omega\cap \partial B(z,\delta_\Omega(z))$ and let   $\mathcal L_z$ be the complex line determined by $z$ and $z^\ast$.  We claim that
     \begin{equation}\label{eq:S-hyper}
   \mathcal L_z\cap \partial B(z,r)\cap \Omega^c\neq \emptyset, \ \ \ \delta_\Omega(z)<r<d_0-\delta_\Omega(z),
     \end{equation}
     where $d_0=d(\overline{\Omega},\{\rho=c_0\})$.
     Indeed,  if $ \mathcal L_z\cap \partial B(z,r)\cap \Omega^c= \emptyset$,  i.e.,
     $$
     \mathcal L_z\cap \partial B(z,r)\subset \Omega.  
     $$
     then  the maximum principle and continuity of  $\rho$ yield 
     $$
     \rho(z^\ast)\le \max\{\rho(w):w\in \mathcal L_z\cap \partial B(z,r)\}<0 =\rho(z^\ast), 
     $$ 
     this is a contradiction!
     
     Since the circle projection $T:w\mapsto |w|$ satisfies $|T(w)-T(w')|\le |w-w'|$,  it follows from Theorem 5.3.1 and Theorem 5.3.2 in \cite{Ransford} that   \eqref{eq:S-hyper} gives \eqref{eq:kl_4}.       
                  \end{proof}

         \begin{proof}[Proof of\/ $(c)$]
    Given $0<\varepsilon\ll 1$,  define
    $$
    \Omega_\varepsilon:=\left\{z\in U: d(z,\overline{\Omega})< \varepsilon\right\}.
    $$
    Since $\overline{\Omega}^\circ =\Omega$,  it is not difficult to show that $\delta_{\Omega_\varepsilon}(z)\rightarrow \delta_\Omega(z)$  uniformly on $\overline{\Omega}$ as $\varepsilon\rightarrow 0$.   Fix a domain $V$ with $\overline{\Omega}\subset V\subset\subset U$.  
   Since $\overline{\Omega}$ is  $\mathcal O(U)-$convex,   there exist a finite number of holomorphic functions $f_1,\cdots,f_m$ on $U$,  where $m=m_\varepsilon$,  such that 
$$
\sup_{\overline{\Omega}}\max_{1\le j\le m} |f_j|<1\ \ \text{and}\ \  \inf_{\overline{V}\backslash \Omega_\varepsilon}  \max_{1\le j\le m} |f_j|\ge 1.
$$   
   Clearly,  $\rho:=\max_{1\le j\le m} |f_j|-1$ is a continuous psh function on $U$,  and we have 
   $$
   \overline{\Omega}\subset \{z\in V: \rho(z)<0\}=:\Omega'\subset \Omega_\varepsilon.
   $$
   Note that $\Omega$ is pseudoconvex since $\overline{\Omega}$ is the limit of a decreasing family of analytic polyhedrons.    
    
   Let $z\in \Omega$ be a fixed point which is sufficiently close to $\partial \Omega$.  Take $\varepsilon\ll 1$ such that $\delta_{\Omega_\varepsilon}(z)\le 3\delta_\Omega(z)/2$.  Then we have 
   $$
   \delta_{\Omega'}(z)\le \delta_{\Omega_\varepsilon}(z)\le 3\delta_\Omega(z)/2.
   $$
   Take $z^\ast\in \partial \Omega'\cap \partial B(z,\delta_{\Omega'}(z))$ and let   $\mathcal L_z$ be the complex line determined by $z$ and $z^\ast$.  From the proof of $(d)$,  we know that
    $$
   \mathcal L_z\cap \partial B(z,r)\cap (\mathbb C^n\setminus \Omega')\neq \emptyset, \ \ \ \delta_{\Omega'}(z)<r<r_0,
     $$
     where $r_0>0$ is suitable constant.  Since $\delta_{\Omega'}(z)\le \frac32\, \delta_\Omega(z)$,  we conclude  that
 $$
\mathcal C_l(\mathcal L_z\cap  B(z,2\delta_\Omega(z))\cap \Omega^c) \ge \mathcal C_l(\mathcal L_z\cap  B(z,2\delta_\Omega(z))\cap (\mathbb C^n\setminus \Omega')) \ge \frac14\cdot \frac{\delta_\Omega(z)}2
$$
in view of the circular projection.  
Thus Theorem \ref{th:CapBerg} applies.  
    \end{proof}

 \begin{proof}[Proof of\/ $(a)$] 
Let $z_0\in \partial \Omega$.  After a complex affine  transformation,  we may assume that $z_0=0$ and there exist a ball in $\mathbb C^{n-1}\times \mathbb R$ given by
$$
B'_{r_0}:=\left\{(z',\mathrm{Re}\,z_n)\in \mathbb C^{n-1} \times \mathbb R: |z'|^2+(\mathrm{Re}\,z_n)^2<r_0^2\right\},
$$ 
and a continuous real-valued function $g$ on $B'_{r_0}$ such that
$$
\Omega\cap \left\{B'_{r_0}\times (-r_0,r_0)\right\}=\left\{z\in B'_{r_0}\times (-r_0,r_0): \mathrm{Im}\,z_n < g(z',\mathrm{Re}\,z_n) \right\}.
$$
Given $0\le t\le \varepsilon_0\ll r_0$,  define
$$
\Omega_t:=\left\{z\in B'_{\frac{r_0}2+t}\times (-r_0/2-t,r_0): \mathrm{Im}\,z_n < g(z',\mathrm{Re}\,z_n) +t \right\}.
$$
Note that $\Omega_0=\Omega\cap \left\{B'_{\frac{r_0}2}\times (-r_0/2,r_0)\right\}$ and $\{\Omega_t\}_{t\in [0,\varepsilon_0]}$ forms an increasing $1-$parameter family of bounded pseudoconvex\/ {\it fat}\/  domains.  

By $(c)$ and the well-known localization principle of the Bergman kernel,  it suffices to show that the closure $\overline{\Omega}_0$ of $\Omega_0$ is $\mathcal O(\Omega_{\varepsilon_0})-$convex.  Given $w\in \Omega_{\varepsilon_0}\setminus \overline{\Omega}_0$,  take $0<t_1<\varepsilon_0$ such that $w\in \partial \Omega_{t_1}$.  
Take $\gamma>0$  and  $t_1<t_2<\varepsilon_0$ such that   $\min_{\overline{\Omega}_0}\delta_{\Omega_{t_1}}>\gamma$ and $\delta_{\Omega_{t_2}}(w)=\gamma/2$.   We are going to construct a holomorphic function $f$ on $\Omega_{t_2}$ satisfying $|f(w)|>1$ and $|f|<1$ on $\overline{\Omega}_0$.  The  argument is essentially standard,   but we include the detail  here for the sake of completeness.  Define $\psi:=-\log \delta_{\Omega_{t_2}}+\log \gamma$.  Clearly,  $\psi$ is a locally Lipschitz continuous psh exhaustion function on $\Omega_{t_2}$  satisfying
$$
\overline{\Omega}_0\subset \{\psi<0\}\ \ \text{and}\ \ \psi(w)=\log 2.
$$
Let $\kappa$ be a convex nondecreasing function on $\mathbb R$ with $\kappa=0$ on $(-\infty,0]$ and $\kappa',\kappa''>0$ on $(0,\infty)$. 
Take a smooth cut-off function $\chi$ on $\mathbb R$ such that $\chi=0$ on $[\frac14 \log 2,\frac13 \log 2]\cup [3\log 2,\infty)$ and $\chi=1$ on $(-\infty,0]\cup [\frac12 \log 2,2\log 2]$.
 Define $h=\frac12$ on $\{\psi\le \frac14 \log 2\}$ and $h=2$ on $\{\frac13 \log 2\le \psi \le 3\log 2\}$.  Then $\chi(\psi) h$ defines a Lipschitz continuous function with compact support in $\Omega_{t_2}$.  By H\"ormander's $L^2-$estimates (cf.  \cite{Hormander}),  there exists a solution of  $\bar{\partial}u=\bar{\partial}(\chi(\psi) h)$ such that
 \begin{eqnarray*}
 \int_{\Omega_{t_2}}  |u|^2 e^{-\kappa(\psi)-2n\log |z-w|} & \lesssim & \int_{\Omega_{t_2}}  |h|^2 |\chi'(\psi)|^2 |\bar{\partial}\psi|^2 e^{-\kappa(\psi)-2n\log |z-w|}\\
 & \lesssim & \int_{\{0<\psi<\frac12 \log 2\}\cup \{2\log 2<\psi<3\log 2\}} e^{-\kappa(\psi)} |\bar{\partial}\psi|^2,
 \end{eqnarray*}
 where the implicit constants are independent of the choice of $\kappa$,  which can be arbitrarily small provided that $\kappa$ increases on $(0,\infty)$ sufficiently rapidly.  Thus $f:=\chi(\psi)h-u$ is a holomorphic function on $\Omega_{t_2}$ which satisfies $f(w)=h(w)=2$ and since $f-h=-u$ is holomorphic on $\{\psi<0\}$,  it follows the mean-value inequality that
\begin{eqnarray*}
\sup_{\overline{\Omega}_0} |f-h|^2  & \lesssim & 
\int_{\{\psi<0\}} |u|^2 e^{-2n\log |z-w|}\\
& \lesssim &  \int_{\Omega_{t_2}}  |u|^2 e^{-\kappa(\psi)-2n\log |z-w|}, 
\end{eqnarray*} 
which can be arbitrarily small provided that $\kappa$ increases on $(0,\infty)$ sufficiently rapidly.  Thus $|f|<1$ on $\overline{\Omega}_0$.  

Finally,  since $\{\Omega_t\}_{t\in [0,\varepsilon_0]}$ forms an increasing $1-$parameter family of bounded pseudoconvex domains,  it follows from a classical result of Docquier-Grauert \cite{DG} that $(\Omega_{t_2},\Omega_{\varepsilon_0})$ forms a Runge pair,  i.e.,  every holomorphic function on $\Omega_{t_2}$ can be approximated locally uniformly on $\Omega_{t_2}$ by holomorphic functions on $\Omega_{\varepsilon_0}$.  From this we obtain a holomorphic function $\widetilde{f}$ on $\Omega_{\varepsilon_0}$ satisfying $|\widetilde{f}(w)|>1$ and $|\widetilde{f}|<1$ on $\overline{\Omega}_0$.  Thus $\overline{\Omega}_0$ is $\mathcal O(\Omega_{\varepsilon_0})-$convex.
\end{proof}

\subsection{Proof of Proposition \ref{prop:Runge}}

We first give an elementary  lemma as follows.  

\begin{lemma}\label{lm:sc}
If\/ $D\subsetneq \mathbb C$ is simply-connected,  then 
$$
K_D(z)\ge \frac1{16\pi}\cdot \delta_D(z)^{-2},\ \ \ z\in D.
$$ 
\end{lemma}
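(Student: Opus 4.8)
The plan is to use the Koebe one-quarter theorem together with the transformation law for the Bergman kernel under biholomorphisms. First I would fix $z\in D$ and, since $D\subsetneq\mathbb C$ is simply connected, invoke the Riemann mapping theorem to produce a biholomorphism $\varphi:D\to\mathbb D$ with $\varphi(z)=0$. By the Koebe one-quarter theorem applied to $\varphi^{-1}:\mathbb D\to D$ (suitably normalized), the image of the disc of radius $1/4$ around $0$ under $\varphi^{-1}$ is contained in... actually the useful direction is: $\delta_D(z)\le |(\varphi^{-1})'(0)| = 1/|\varphi'(z)|$ is false in general, but Koebe gives $\tfrac14|(\varphi^{-1})'(0)|\le\delta_D(z)$, i.e. $|\varphi'(z)|\le \tfrac{1}{4\,\delta_D(z)}\cdot 4$; let me be careful: Koebe's theorem says $\varphi^{-1}(\mathbb D)\supset B(\varphi^{-1}(0),\tfrac14|(\varphi^{-1})'(0)|)$, hence $\delta_D(z)\ge\tfrac14|(\varphi^{-1})'(0)| = \tfrac{1}{4|\varphi'(z)|}$, so $|\varphi'(z)|^2\ge \tfrac1{16\,\delta_D(z)^2}$.

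Next I would recall the transformation rule $K_D(z) = K_{\mathbb D}(\varphi(z))\,|\varphi'(z)|^2 = K_{\mathbb D}(0)\,|\varphi'(z)|^2$, together with the explicit value $K_{\mathbb D}(0) = \tfrac1\pi$. Combining this with the Koebe estimate $|\varphi'(z)|^2\ge \tfrac1{16\,\delta_D(z)^2}$ immediately yields
\[
K_D(z) = \frac1\pi\,|\varphi'(z)|^2 \ge \frac1\pi\cdot\frac{1}{16\,\delta_D(z)^2} = \frac1{16\pi}\,\delta_D(z)^{-2},
\]
which is exactly the claimed bound. The only subtlety to address cleanly is that the Riemann map need not extend continuously to the boundary, but the estimate $\delta_D(z)\ge\tfrac14|(\varphi^{-1})'(0)|$ follows purely from the fact that $\varphi^{-1}(\mathbb D)\subset D$ together with the Koebe distortion theorem, so no boundary regularity is needed.

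The main (mild) obstacle is simply getting the constant in the Koebe estimate oriented correctly — one must apply the one-quarter theorem to $\varphi^{-1}$ rather than $\varphi$, noting that $\varphi^{-1}(\mathbb D)\subseteq D$ and that the ball of radius $\tfrac14|(\varphi^{-1})'(0)|$ about $z=\varphi^{-1}(0)$ lies inside $\varphi^{-1}(\mathbb D)$, hence inside $D$. There is no serious analytic difficulty here; the lemma is purely a bookkeeping combination of two classical facts (Koebe distortion and the conformal transformation law for $K$), and the resulting constant $\tfrac1{16\pi}$ is not claimed to be sharp.
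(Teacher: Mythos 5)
Your argument is correct and is essentially identical to the paper's: apply the Riemann mapping theorem with $\varphi(z)=0$, use the Koebe one-quarter theorem to get $\delta_D(z)\ge \tfrac{1}{4|\varphi'(z)|}$, and combine with the Bergman kernel transformation law and $K_{\mathbb D}(0)=1/\pi$. The paper simply cites this Koebe estimate from Carleson--Gamelin rather than spelling it out, but the chain of reasoning is the same.
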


\begin{proof}
Given $z\in D$,  consider the Riemann mapping $F:D\rightarrow \mathbb D$ satisfying
$$
 F(z)=0,\,F'(z)>0.
$$
It is a standard consequence of the Kobe one-quarter theorem that  
 $\delta_D(z)\ge \frac1{4|F'(z)|}$ (cf.  \cite{CG},  Theorem 1.4).   Thus
$$
K_D(z)=K_{\mathbb D}(0) |F'(z)|^2\ge \frac1{16 \pi}\cdot \delta_D(z)^{-2}.
$$
\end{proof}

\begin{proof}[Proof of Proposition \ref{prop:Runge}]
Note that the Runge domain $\Omega$ enjoys the following geometric property: for every complex line $\mathcal L$,  all components of $\Omega\cap L$ are  simply-connected.  To see this,  let $D$ be a component of $\Omega\cap \mathcal L$.  Given  $f\in \mathcal O(D)$,  the holomorphic function on $\Omega\cap \mathcal L$ which equals  $f$ on $D$ and $0$ on other components  can be extended to some $\widetilde{f}\in \mathcal O(\Omega)$,  for $\Omega$ is pseudoconvex.  
As $\Omega$ is Runge,  there exists a sequence of complex polynomials $\{P_j\}$ converging locally uniformly to $\widetilde{f}$ on $\Omega$.  Thus the polynomials  $\left\{P_j|_{\mathcal L}\right\}$ converge locally uniformly on $D$ to $f$,  so that $D$ has to be simply-connected.  

Given $z\in \Omega$,  take $z^\ast\in \partial \Omega$ so that $|z-z^\ast|=\delta_\Omega(z)$.  Let $\mathcal L_z$ be the complex line determined by $z,z^\ast$.  Let $D_z$ be the component of $\Omega\cap \mathcal L_z$ which contains $z$.  Clearly,  we have 
\begin{equation}\label{eq:sc}
\delta_{D_z}(z)=\delta_\Omega(z).
\end{equation}
 Set $f=K_{D_z}(\cdot,z)/\sqrt{K_{D_z}(z)}$ on $D_z$ and $f=0$ on other components of $\Omega\cap \mathcal L_z$.  By the Ohsawa-Takegoshi extension theorem,  there is a holomorphic function $\widetilde{f}$ on $\Omega$ such that $\widetilde{f}|_{\Omega\cap \mathcal L_z}=f$ and 
$$
\int_\Omega |\widetilde{f}|^2 \lesssim \int_{\Omega\cap \mathcal L_z} |f|^2 \lesssim 1,
$$
where the implicit constants depend only on the diameter of $\Omega$.  Thus
$$
K_\Omega(z)\ge \frac{|\widetilde{f}(z)|^2}{\int_\Omega |\widetilde{f}|^2} \gtrsim K_{D_z}(z) \gtrsim \delta_{D_z}(z)^{-2}
$$
in view of Lemma \ref{lm:sc}.  This together with \eqref{eq:sc} conclude the proof.
\end{proof}

  \subsection{Proofs of Theorem \ref{th:CapBerg_2} and Corollary \ref{cor:VolBerg_2}}           
                  The following lemma is essentially implicit in \cite{Zwonek}.
                  
                  \begin{lemma}\label{lm:CapBerg_2}
                Let $\Omega$ be a bounded domain in $\mathbb C$ and $\alpha>1$.  Then for any $z\in \Omega$ with $\alpha \delta_\Omega(z)\le 1/2$,
                \begin{equation}\label{eq:Z_1}
                K_\Omega(z)\gtrsim (\alpha\delta_\Omega(z))^{-2}(-\log \mathcal C_l(B(z,\alpha\delta_\Omega(z))\cap \Omega^c))^{-1},
                \end{equation}
                where the implicit constant  depends only on  $\mathrm{diam\,}\Omega$.  
                  \end{lemma}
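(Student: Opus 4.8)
\emph{Proof strategy.} First I would dispose of a degenerate case. We may assume $\kappa:=\mathcal C_l(E)>0$, where $E:=\overline{B(z,r)}\cap\Omega^c$ and $r:=\alpha\delta_\Omega(z)$, since otherwise the right side of \eqref{eq:Z_1} is $\le0$; writing $\delta:=\delta_\Omega(z)=r/\alpha$, we have $d(z,E)=\delta$, $E\subset\{w:\delta\le|w-z|\le r\}$, and $\kappa\le\mathcal C_l(\overline{B(z,r)})=r<1$. If $z$ lies in a bounded component of $\mathbb C\setminus E$, then $\Omega$ lies in the polynomial hull $\widehat E$ of $E$ (that is, $E$ together with the bounded components of $\mathbb C\setminus E$), hence in $\overline{B(z,r)}$, so $K_\Omega(z)\ge K_{B(z,r)}(z)\gtrsim r^{-2}\gtrsim r^{-2}(-\log\kappa)^{-1}$ because $-\log\kappa\ge\log2$. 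Thus I may assume $z$ belongs to the unbounded component of $\mathbb C\setminus E$. The main point is then to produce a single $f\in\mathcal O(\Omega)$ with $\int_\Omega|f|^2\lesssim_{\mathrm{diam}\,\Omega}-\log\kappa$ and $|f(z)|\gtrsim r^{-1}$; then $K_\Omega(z)\ge|f(z)|^2/\int_\Omega|f|^2$ gives \eqref{eq:Z_1}.

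\emph{The test function.} Let $\mu$ be the equilibrium measure of $E$ (the maximizer in the definition of $\mathcal C_l$) and $p_\mu(w):=\int_E\log|w-\zeta|\,d\mu(\zeta)$, so $g:=g_{\mathbb C\setminus E}(\cdot,\infty)=p_\mu-\log\kappa\ge0$ vanishes q.e. on $E$ and equals $\log|w|-\log\kappa+o(1)$ at $\infty$. I take $f(w):=2\,\partial_w p_\mu(w)=\int_E(w-\zeta)^{-1}\,d\mu(\zeta)$, holomorphic on $\mathbb C\setminus E\supset\Omega$, with $|f|^2=|\nabla g|^2$. Since $g\equiv0$ on the bounded components of $\mathbb C\setminus E$, so is $f$, and hence, with $R:=2\,\mathrm{diam}\,\Omega$ (so $E\subset B(z,R)$), Green's identity on $B(z,R)\setminus\widehat E$ (where $g$ is harmonic and vanishes q.e. on $\partial\widehat E$), the fact that the flux of $\nabla g$ through $\partial B(z,R)$ equals $2\pi$, and $p_\mu\le\log(R+r)$ on $\partial B(z,R)$ give
$$\int_\Omega|f|^2\le\int_{B(z,R)\setminus E}|f|^2=\int_{B(z,R)\setminus\widehat E}|\nabla g|^2=\int_{\partial B(z,R)}g\,\partial_\nu g\,d\sigma\le2\pi\log\frac{R+r}{\kappa}\lesssim_{\mathrm{diam}\,\Omega}-\log\kappa.$$

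\emph{Lower bound for $|f(z)|$ — the hard part.} Here $|f(z)|=|\nabla g(z)|$. When the hull $\widehat E$ is connected, $\mathbb C_\infty\setminus\widehat E$ is simply connected and the exterior Riemann map $\Phi$ of $\mathbb C_\infty\setminus\widehat E$ onto $\mathbb D$ with $\Phi(\infty)=0$ equals $e^{-G}$ for a single-valued branch $G$ of $g+i\tilde g$, so $|f(z)|=|G'(z)|=|\Phi'(z)|e^{g(z)}\ge|\Phi'(z)|$, and Koebe's one-quarter theorem applied to $\Phi^{-1}$ gives $|\Phi'(z)|\ge(4\,d(z,\widehat E))^{-1}\ge(4r)^{-1}$; combined with the $L^2$-bound this proves \eqref{eq:Z_1}. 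The genuine difficulty is that when $\widehat E$ is disconnected, $f=G'$ can vanish at $z$ — precisely when $z$ is one of the finitely many critical points of $g$ — so $f$ need not be admissible; I expect this to be the only substantive obstacle. To handle it I would either (i) optimise over the family $f_\sigma(w)=\int_E(w-\zeta)^{-1}d\sigma(\zeta)$, $\sigma$ a complex measure on $E$: the quadratic form $\sigma\mapsto\|f_\sigma\|^2_{L^2(B(z,R)\setminus E)}$ is dominated by the logarithmic energy of $|\sigma|$, so near-equilibrium $\sigma$ retain the $L^2$-bound, while a suitable adjustment of the phases of the masses of $\sigma$ on the components of $\widehat E$ (using the Koebe estimate on the component nearest $z$) keeps $|f_\sigma(z)|\gtrsim r^{-1}$, and a Cauchy–Schwarz/minimal-norm argument extracts the required $f$; or (ii) if $G'$ vanishes at $z$ to order $k$ (necessarily less than the number of components of $\widehat E$), replace $f$ by $f(w)/(w-z)^k$ and use $d(z,E)\asymp r$ to absorb the resulting factor, reducing matters to the bound $|(G')^{(k)}(z)|\gtrsim r^{-(k+1)}$, the natural multiply connected analogue of Koebe's estimate. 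I am fairly confident one of these routes closes the gap; everything else is routine.
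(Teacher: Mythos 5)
Your choice of test function $f=\int_E(w-\zeta)^{-1}d\mu(\zeta)$ and the Green's-identity proof that $\int_\Omega|f|^2\lesssim-\log\kappa$ are in the right spirit — the paper uses exactly this class of functions and cites Zwonek, Lemma 3, for the $L^2$ bound. The genuine problem, which you have correctly isolated, is the pointwise lower bound $|f(z)|\gtrsim r^{-1}$, but neither of your two repairs works, and in fact your Koebe step is already wrong \emph{in the connected-hull case}. Applying Koebe's $1/4$-theorem to $\Phi^{-1}$ at $\Phi(z)$ gives $|\Phi'(z)|\ge(1-|\Phi(z)|)/(4\,d(z,\widehat E))$, not $|\Phi'(z)|\ge1/(4\,d(z,\widehat E))$; the factor $1-|\Phi(z)|=1-e^{-g(z)}$ can be arbitrarily small, and moreover one needs $|\Phi(z)|\ge1/2$ for $\Phi^{-1}$ to be $\mathbb C$-valued on the relevant disc. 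Concretely take $z=0$ and $E=\{\delta e^{i\theta}:|\theta|\le\theta_0\}$ with $\theta_0$ close to $\pi$: then $\widehat E=E$ is connected, $z$ lies in the unbounded component of $E^c$, yet $g(0)=\log\delta-\log\mathcal C_l(E)\to0$ as $\theta_0\to\pi$, and hence $|f(0)|=|\nabla g(0)|\lesssim g(0)/\delta\to0$ by Harnack — far below the claimed $\delta^{-1}$. Route (ii) cannot be made uniform either, since $\widehat E$ may have infinitely many components, so the order $k$ of vanishing of $G'$ at $z$ is not bounded by any quantity in the statement, and the loss from dividing by $(w-z)^k$ is uncontrolled. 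Route (i) is vague and, as the arc example shows, per-component phase adjustments cannot fix a cancellation that already occurs within a single component.

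The paper's fix is elementary and worth noting: trisect $\mathbb C$ into three closed sectors of aperture $2\pi/3$ with common vertex $z$, split $E$ accordingly into $E_1,E_2,E_3$ with $\mathcal C_l(E_1)$ largest, and take $\mu$ to be the equilibrium measure of $E_1$ alone. The vectors $(z-\zeta)^{-1}$, $\zeta\in E_1$, then all lie in a sector of aperture $2\pi/3$, so
$$\Bigl|\int_{E_1}\frac{d\mu(\zeta)}{z-\zeta}\Bigr|\ge\cos\frac{\pi}{3}\int_{E_1}\frac{d\mu(\zeta)}{|z-\zeta|}\ge\frac1{2r},$$
with no cancellation whatsoever. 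The loss in capacity is only a factor of three: since $\mathrm{diam}\,E\le2r\le1$, subadditivity of $1/\log(1/\mathcal C_l)$ (Ransford, Theorem 5.1.4) gives $-\log\mathcal C_l(E_1)\le3\bigl(-\log\mathcal C_l(E)\bigr)$, and this is exactly where the hypothesis $\alpha\delta_\Omega(z)\le1/2$ is used. This trisection is the key idea your proposal is missing, and it sidesteps entirely the discussion of $\Phi$, $\widehat E$, and critical points of $G$.
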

                  
                  \begin{proof}
            Given $z\in \Omega$,   set $E:=\overline{B(z,\alpha\delta_\Omega(z))}\cap \Omega^c$.  Without loss of generality,  we assume $\mathcal C_l(E)>0$.  Dividing $\mathbb C$ into three trisection angles with vertices at $z$,  one may divide $E$ into three parts $E_1,E_2,E_3$.  We may assume $\mathcal C_l(E_1)\ge \mathcal C_l(E_2)\ge \mathcal C_l(E_3)$.  Then
\begin{equation}\label{eq:Z_2}
\frac1{-\log \mathcal C_l(E)}\le \frac1{-\log \mathcal C_l(E_1)}+ \frac1{-\log \mathcal C_l(E_2)} + \frac1{-\log \mathcal C_l(E_3)}\le \frac3{-\log \mathcal C_l(E_1)}.
\end{equation}            
                Let $\mu_{E_1}$ denote the   equilibrium measure of $E_1$.   Consider the following function
               $$
               f_{E_1}(\zeta):=\int_{E_1} \frac{d\mu_{E_1}(w)}{\zeta-w},\ \ \ \zeta\in \mathbb C\setminus E_1.
               $$
            Then $f_{E_1}$ is holomorphic on $\mathbb C\setminus E_1\supset \Omega$ and satisfies    
            $$
            \int_\Omega |f_{E_1}|^2 \lesssim {-\log \mathcal C_l(E_1)}  \lesssim {-\log \mathcal C_l(E)},
            $$
            where the implicit constant depends only on $\mathrm{diam\,}\Omega$ (cf. \cite{Zwonek},  Lemma 3).   On the other hand,  since $|z-w|\le \alpha \delta_\Omega(z)$ for all $w\in E_1$ and $E_1$ is contained in an angle with vertex $z$ and of the apex angle $2\pi/3$,  we conclude that there exists a numerical constant $C_0>0$ such that 
            $$
           | f_{E_1}(z) | \ge C_0 \int_{E_1} \frac{d\mu_{E_1}(w)}{|z-w|}\ge \frac{C_0}{\alpha \delta_\Omega(z)},
            $$
            for $d\mu_{E_1}$ is a probability measure.   Thus
            $$
            K_\Omega(z)\ge \frac{|f_{E_1}(z)|^2}{\|f\|_{L^2(\Omega)}^2}\gtrsim (\alpha \delta_\Omega(z))^{-2}(-\log \mathcal C_l(E))^{-1}.
            $$
                  \end{proof}
                  
        \begin{proof}[Proof of Theorem \ref{th:CapBerg_2}]
    The conclusion follows directly from  the Ohsawa-Takegoshi extension theorem and Lemma \ref{lm:CapBerg_2}.      
                  \end{proof}    
                  
                  \begin{proof}[Proof of Corollary \ref{cor:VolBerg_2}]   
          The argument is a mimic of the proof of Theorem \ref{th:KernelEstimate}/$(b)$.  Given $z\in \Omega$,  take $z^\ast\in \partial \Omega$ such that $|z-z^\ast| = \delta_\Omega(z)$.  Since  $B(z^\ast,\delta_\Omega(z))\subset B(z,2\delta_\Omega(z))$,  it follows from \eqref{eq:CB5} that
     \begin{equation}\label{eq:Z_3}
     |B(z,2\delta_\Omega(z))\cap \Omega^c|\ge \varepsilon (C_n \delta_\Omega(z))^{2n\beta}.
     \end{equation}
     We claim that there exists a real line  $l_z\ni z$,  such that
     \begin{equation}\label{eq:Z_4}
     | l_z\cap B(z,2\delta_\Omega(z))\cap \Omega^c|_1 \ge \varepsilon' \delta_\Omega(z)^{2n\beta-2n+1}
     \end{equation}
    for suitable $\varepsilon'=\varepsilon'(n,\varepsilon)>0$.  Indeed,  if we denote $E:=B(z,2\delta_\Omega(z))\cap \Omega^c$ and $E_\zeta:=E\cap l_{z,\zeta}$,   then \eqref{eq:Vol} together with \eqref{eq:Z_3}  yield
     $$
     \int_{\mathbb \zeta\in \mathbb S^{2n-1}} |E_\zeta|_1 d\sigma(\zeta)\ge C_n^{2n\beta} 2^{1-2n}\varepsilon \delta_\Omega(z)^{2n\beta-2n+1},
     $$
     so that there exists at least one point  $\zeta_0\in \mathbb S^{2n-1}$,  such that
     $$
     |E_{\zeta_0}|_1\ge \frac{C_n^{2n\beta}\varepsilon}{2^{2n}|\mathbb S^{2n-1}|} \cdot \delta_\Omega(z)^{2n\beta-2n+1}.
     $$
     Take $l_z=l_{z,\zeta_0}$ and $\varepsilon'= \frac{C_n^{2n\beta}\varepsilon}{2^{2n}|\mathbb S^{2n-1}|}$,  we get \eqref{eq:Z_4}.  
     
     Let $\mathcal L_z$ be the complex line determined by $l_z$.  Then
     $$
     \mathcal C_l(\mathcal L_z\cap B(z,2\delta_\Omega(z))\cap \Omega^c)\ge \frac{|E_{\zeta_0}|_1}4\ge \frac{\varepsilon'}4\cdot\delta_\Omega(z)^{2n\beta-2n+1}.
     $$
   Thus Theorem \ref{th:CapBerg_2} applies.          
                  \end{proof}  
                  
         \section{Lower bounds of the Bergman distance}
         
            \begin{proposition}\label{prop:Green_2}
       Suppose\/ $ \mathcal D_S(\varepsilon,\lambda)>0$\/ for some $\varepsilon,\lambda$.   There exists  $c\gg 1$  such that for every $k\in \bigcap_{a\in \partial \Omega}\mathcal N_a^n(\varepsilon,\lambda)$ and  every $w$ with $\lambda^{k-1/3}\le \delta_\Omega(w)\le \lambda^{k-1/2}$,
         \begin{equation}\label{eq:G_18}
         \{ g_\Omega(\cdot,w)\le -c\}\subset \left\{ \lambda^k< \delta_\Omega<\lambda^{k-1}\right\}.  
                \end{equation}
                  \end{proposition}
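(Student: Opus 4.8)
The plan is to deduce this directly from Lemma~\ref{lm:Green_2}, which already localizes a sublevel set $\{g_\Omega(\cdot,w)\le -c\}$ inside a small disc $\mathbb D_{\alpha r}(w)$ as soon as one has, at the boundary point nearest to $w$, a one-scale capacity lower bound $\mathcal C_l(K_r)\ge\varepsilon_0 r$, together with $|w-a|$ comparable to $r$ and $\mathbb D_{2\alpha r}(w)\subset\Omega$. The whole point of the statement is that when $\delta_\Omega(w)$ lies in the narrow window $[\lambda^{k-1/3},\lambda^{k-1/2}]$ and $k$ is a good scale for \emph{every} boundary point, the right choice is simply $r:=\delta_\Omega(w)$, and then the disc $\mathbb D_{\alpha r}(w)$ is trapped inside $\{\lambda^k<\delta_\Omega<\lambda^{k-1}\}$ for purely metric reasons.

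Concretely, I would first fix a small $\alpha\in(0,1/2)$ depending only on $\lambda$ (to be pinned down at the end), then, given $w$ as in the statement, pick $a\in\partial\Omega$ with $|w-a|=\delta_\Omega(w)$, translate so that $a=0$, and set $r:=\delta_\Omega(w)$. Since $r\ge\lambda^{k-1/3}\ge\lambda^k$ one has $K_r=\overline{\mathbb D}_r-\Omega\supset\overline{\mathbb D}_{\lambda^k}-\Omega=K_{\lambda^k}(a)$, hence $\mathcal C_l(K_r)\ge\mathcal C_l(K_{\lambda^k}(a))\ge\varepsilon\lambda^k$ by the hypothesis $k\in\mathcal N_a(\varepsilon,\lambda)$; and since $r\le\lambda^{k-1/2}$ we have $\lambda^k\ge\lambda^{1/2}r$, so $\mathcal C_l(K_r)\ge\varepsilon_0 r$ with $\varepsilon_0:=\varepsilon\lambda^{1/2}$. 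As $|w|=r$ and $\mathbb D_{2\alpha r}(w)\subset\mathbb D_r(w)\subset\Omega$ (because $2\alpha<1$), Lemma~\ref{lm:Green_2} with parameters $(\alpha,\beta,\varepsilon)\mapsto(\alpha,1,\varepsilon_0)$ then produces a constant $c=c(\alpha,1,\varepsilon_0)$, hence a constant depending only on $\varepsilon$ and $\lambda$, with $\{g_\Omega(\cdot,w)\le -c\}\subset\mathbb D_{\alpha r}(w)$. Inspection of the proof of that lemma shows $c$ is large, and enlarging $c$ only shrinks the sublevel set, so we may assume $c\gg 1$.

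Finally I would verify $\mathbb D_{\alpha r}(w)\subset\{\lambda^k<\delta_\Omega<\lambda^{k-1}\}$: for $\zeta\in\mathbb D_{\alpha r}(w)$ the $1$-Lipschitz property of $\delta_\Omega$ yields $(1-\alpha)\delta_\Omega(w)<\delta_\Omega(\zeta)<(1+\alpha)\delta_\Omega(w)$, so by $\lambda^{k-1/3}\le\delta_\Omega(w)\le\lambda^{k-1/2}$ it suffices that $(1-\alpha)\lambda^{-1/3}\ge 1$ and $(1+\alpha)\lambda^{-1/2}\le\lambda^{-1}$, i.e. $\alpha\le 1-\lambda^{1/3}$ and $\alpha\le\lambda^{-1/2}-1$. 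Both right-hand sides are strictly positive for $0<\lambda<1$, so it is enough to take $\alpha$ equal to, say, half of $\min\{1/2,\,1-\lambda^{1/3},\,\lambda^{-1/2}-1\}$, which also secures $2\alpha<1$. That given, there is no serious difficulty; the main thing to watch is exactly this exponent bookkeeping---keeping $\alpha$ small enough for the annulus containment while keeping $2\alpha r<\delta_\Omega(w)$ so that Lemma~\ref{lm:Green_2} applies---and recording that the intersection over all $a\in\partial\Omega$ in the hypothesis is genuinely needed, because $w$ ranges over the entire shell $\{\lambda^{k-1/3}\le\delta_\Omega\le\lambda^{k-1/2}\}$ and its nearest boundary point can be anywhere on $\partial\Omega$.
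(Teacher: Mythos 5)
Your proof is correct and follows essentially the same route as the paper: translate so that the nearest boundary point $a(w)$ is at the origin, feed the good-scale capacity bound $\mathcal C_l(K_{\lambda^k}(a(w)))\ge\varepsilon\lambda^k$ into Lemma~\ref{lm:Green_2}, and conclude via the $1$-Lipschitz continuity of $\delta_\Omega$. The paper's proof is terse and implicitly takes $r=\lambda^k$ (so $\beta=\delta_\Omega(w)/\lambda^k\in[\lambda^{-1/3},\lambda^{-1/2}]$), whereas you normalize $r=\delta_\Omega(w)$ with $\beta=1$ at the cost of shrinking $\varepsilon$ to $\varepsilon\lambda^{1/2}$; this is a harmless reparametrization, and your version has the small bonus of fixing $\beta$ rather than letting it range over a compact interval.
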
 
                  
                  \begin{proof}
                 Take $a(w)\in \partial \Omega$  such that $|w-a(w)|=\delta_\Omega(w)$. Note that 
                 $$
                 \mathcal C_l(K_{\lambda^k}(a(w)))\ge \varepsilon \lambda^k.
                 $$
                 Thus Lemma \ref{lm:Green_2} applies.
                 \end{proof}

         \begin{proposition}\label{prop:Green_1}
       If\/ $ \mathcal D_W(\varepsilon,\lambda,\gamma)>0$\/ for some $\varepsilon,\lambda,\gamma$, then there exists $c\gg 1$ such that
         \begin{equation}\label{eq:G_10}
         \{ g_\Omega(\cdot,w)\le -1\}\subset \left\{c^{-1}\phi_E(w)^{\frac{1+\beta}\beta}< \phi_E < c\,\phi_E(w)^{\frac{\beta}{1+\beta}}\right\}  
                \end{equation}
                where $\beta$ is given as \eqref{eq:LG_1}.              
                     \end{proposition}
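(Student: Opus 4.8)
The plan is to squeeze $S_w:=\{g_\Omega(\cdot,w)\le-1\}$ between two level sets of $\phi_E$ by comparing $-g_\Omega(\cdot,w)$ with the capacity potential (relative to $\Omega$) of a small disc centered at $w$, and then to absorb the resulting logarithmic loss into the stated powers of $\phi_E(w)$ by means of \eqref{eq:LG_1}. One restricts throughout to $w$ so close to $\partial\Omega$ that $B(w,\delta_\Omega(w))\cap E=\emptyset$ and that the bound $\phi_E\lesssim(-\log\delta_\Omega)^{-\beta}$ holds near $\partial\Omega$ --- this is exactly what the proof of Theorem \ref{th:logGreen}$(1)$ produces, with $\beta$ the exponent in \eqref{eq:LG_1}, and it is the only place the hypothesis $\mathcal D_W(\varepsilon,\lambda,\gamma)>0$ enters. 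Points of $S_w$ that lie in $B(w,\delta_\Omega(w)/2)$ cause no trouble: there $\delta_\Omega(z)\asymp\delta_\Omega(w)$, so Harnack's inequality for the harmonic function $\phi_E$ on $B(w,\delta_\Omega(w))$ gives $\phi_E(z)\asymp\phi_E(w)$, which --- $\phi_E(w)$ being small --- lies well inside the interval in \eqref{eq:G_10} as soon as $c$ is large. So the essential case is $z\in S_w$ with $|z-w|\ge\delta_\Omega(w)/2$.

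For that case I would set $D_w:=\overline{B(w,\delta_\Omega(w)/4)}$ and let $\phi_{D_w}$ be its capacity potential relative to $\Omega$. Since $\mathcal C_l(D_w)=\delta_\Omega(w)/4$ and $d(D_w,\partial\Omega)=3\delta_\Omega(w)/4$, combining \eqref{eq:C_8} with \eqref{eq:C_9} gives $\mathcal C_d(D_w,\Omega)\gtrsim|\log\delta_\Omega(w)|^{-1}$. Feeding this into the fundamental inequality \eqref{eq:C_4''} with $K=D_w$, and using the symmetry $g_\Omega(\zeta,z)=g_\Omega(z,\zeta)$ together with Harnack's inequality for the positive harmonic function $-g_\Omega(z,\cdot)$ on $B(w,\delta_\Omega(w)/2)\supset D_w$, one obtains $\phi_{D_w}(z)\gtrsim|\log\delta_\Omega(w)|^{-1}(-g_\Omega(z,w))$. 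On the other hand, the generalized maximum principle on $\Omega\setminus(D_w\cup E)$ gives $\phi_{D_w}\le\phi_E/\min_{\partial D_w}\phi_E$, and a further application of Harnack gives $\min_{\partial D_w}\phi_E\asymp\phi_E(w)$; hence $\phi_{D_w}(z)\lesssim\phi_E(z)/\phi_E(w)$. Putting the two bounds together and using $-g_\Omega(z,w)\ge1$ yields $\phi_E(z)\gtrsim\phi_E(w)/|\log\delta_\Omega(w)|$. Finally, the growth estimate applied at $w$ (with $a\in\partial\Omega$ the nearest boundary point, so $|w-a|=\delta_\Omega(w)$) reads $|\log\delta_\Omega(w)|\lesssim\phi_E(w)^{-1/\beta}$, whence $\phi_E(z)\gtrsim\phi_E(w)^{(1+\beta)/\beta}$, which is the left-hand inclusion in \eqref{eq:G_10}.

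For the right-hand inclusion I would use the symmetry $g_\Omega(z,w)=g_\Omega(w,z)$, so that $z\in S_w$ is the same as $w\in S_z$. One first checks that $z$ is itself near $\partial\Omega$: for $z$ in a fixed compact subset $L\subset\Omega$ (in particular for $z\in E$), Harnack applied to $-g_\Omega(\cdot,z)$ near $\partial\Omega$ gives $-g_\Omega(w,z)\asymp-g_\Omega(w,z_0)\asymp\phi_E(w)$, which tends to $0$ as $w\to\partial\Omega$ by the growth estimate; hence $S_w$ misses $L$ once $w$ is close enough to $\partial\Omega$. (This also shows $S_w\cap E=\emptyset$, legitimizing the maximum-principle step above.) Therefore the left-hand inclusion already proved may be applied with the roles of $z$ and $w$ exchanged, giving $\phi_E(w)\gtrsim\phi_E(z)^{(1+\beta)/\beta}$, i.e. $\phi_E(z)\lesssim\phi_E(w)^{\beta/(1+\beta)}$. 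Taking $c$ to be the larger of the two implied constants completes the proof.

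The step I expect to be decisive is the comparison $-g_\Omega(z,w)\lesssim|\log\delta_\Omega(w)|\,\phi_E(z)/\phi_E(w)$: the loss must be exactly one factor $|\log\delta_\Omega(w)|$, coming from $\mathcal C_d$ of a disc of radius $\asymp\delta_\Omega(w)$ via \eqref{eq:C_8} and \eqref{eq:C_9}, and it is precisely this logarithmic loss that \eqref{eq:LG_1} is designed to convert into the exponents $(1+\beta)/\beta$ and $\beta/(1+\beta)$. The remaining ingredients --- the Harnack comparisons, the maximum principle, and the fact that $S_w$ stays near $\partial\Omega$ --- are routine, and the overall scheme runs parallel to the proof of Proposition \ref{prop:Green_2}.
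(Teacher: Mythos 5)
Your proof is correct, but it goes by a genuinely different route from the paper's. The paper (following a trick from \cite{CG}) first establishes a modulus-of-continuity bound $|\phi_E(z)-\phi_E(w)|\lesssim(-\log|z-w|)^{-\beta}$, using \eqref{eq:LG_1} when $|z-w|\ge\delta_\Omega(w)/2$ and a gradient/Harnack estimate otherwise; from this it deduces that $\phi_E(z)=\phi_E(w)/2$ forces $\log(|z-w|/R)\ge -c_2\phi_E(w)^{-1/\beta}$, and then builds a single subharmonic barrier $\psi$ with a logarithmic pole at $w$ (a max of $\log(|z-w|/R)$ and $-2c_2\phi_E(w)^{-1-1/\beta}\phi_E$); the inclusion follows from $g_\Omega(\cdot,w)\ge\psi$, and the right-hand half by symmetry. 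Your argument instead stays inside the capacity machinery of Section 2: you introduce the auxiliary potential $\phi_{D_w}$ of a disc of radius $\asymp\delta_\Omega(w)$, bound it below by $|\log\delta_\Omega(w)|^{-1}\,(-g_\Omega(\cdot,w))$ via \eqref{eq:C_4''}, \eqref{eq:C_8}, \eqref{eq:C_9} and Harnack, bound it above by $\phi_E/\phi_E(w)$ via the maximum principle on $\Omega\setminus(D_w\cup E)$, and then convert the logarithmic loss into the exponent $(1+\beta)/\beta$ using \eqref{eq:LG_1}. The exponents agree, your intermediate estimate $\phi_E(z)\gtrsim\phi_E(w)/|\log\delta_\Omega(w)|$ is in fact slightly sharper than what the paper extracts, and you avoid the hybrid-barrier construction; the price is that your argument depends on the Section 2 formalism where the paper's is more self-contained after \eqref{eq:LG_1}. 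One point to make explicit: in the symmetry step you should first fix the near-boundary threshold $\eta_2$ for which your left-hand inclusion is valid, then take $L:=\{\delta_\Omega\ge\eta_2\}$ and shrink the threshold for $w$ so that $S_w$ avoids $L$; only then does the already-proved inclusion legitimately apply with $z$ in the role of $w$. As written, ``once $w$ is close enough'' hides the dependence of $L$ on $\eta_2$.
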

          
           \begin{proof}
         We shall first adopt a trick from \cite{CG}. Consider two points $z,w$ with $|z-w|$, $\delta_\Omega(z)$ and $\delta_\Omega(w)$ are sufficiently small.   We want to show
           \begin{equation}\label{eq:G_11}
           |\phi_E(z)-\phi_E(w)|\le c_0 (-\log |z-w|)^{-\beta}
           \end{equation}
           for some numerical constant $c_0>0$. Without loss of generality, we assume $\delta_\Omega(w)\ge \delta_\Omega(z)$. If $|z-w|\ge \delta_\Omega(w)/2$, this follows directly from \eqref{eq:LG_1}. Since $\phi_E$ is a positive harmonic function on $\mathbb D_{\delta_\Omega(w)}(w)$, we see that if $|z-w|\le \delta_\Omega(w)/2$ then  
           \begin{eqnarray*}
           |\phi_E(z)-\phi_E(w)| & \le & \sup_{\mathbb D_{\delta_\Omega(w)/2}(w)} |\nabla \phi_E|\, |z-w|\\
            & \le & c_1 \delta(w)^{-1} (-\log \delta_\Omega(w))^{-\beta} |z-w|\ \  \ (\text{by\ } \eqref{eq:LG_1})\\
            & \le & c_1 (2|z-w|)^{-1} (-\log (2|z-w|))^{-\beta} |z-w|  \\
            & \le & c_0 (-\log|z-w|)^{-\beta}.
                                 \end{eqnarray*}
 The remaining argument is standard.  Let $R$ be the diameter of $\Omega$. By (\ref{eq:G_11}) we conclude that if $\phi_E(z)=\phi_E(w)/2$ then      
 $$
 \log \frac{|z-w|}R \ge -\left(\frac{2c_0}{\phi_E(w)}\right)^{1/\beta}-\log R\ge -c_2 \phi_E(w)^{-1/\beta}
 $$    
      Since $-\phi_E$ is subharmonic on $\Omega$,  it follows that
$$
\psi(z):=\left\{
\begin{array}{ll}
 \log\frac{|z-w|}{R} & \text{ if\ } \phi_E(z)\ge \phi_E(w)/2\\
 \max\left\{\log\frac{|z-w|}{R},- 2c_2 \phi_E(w)^{-1-1/\beta} \phi_E(z)\right\} & \text{otherwise}.
\end{array}
\right.
$$
is a well-defined negative subharmonic function on $\Omega$ with a logarithmic pole at $w$, and if $\phi_E(z)\le \phi_E(w)/2$ then we have
$$
g_\Omega(z,w)\ge \psi(z)\ge - 2c_2 \phi_E(w)^{-1-1/\beta} \phi_E(z),
$$
so that
$$
\{g_\Omega(\cdot,w)\le -1\}\cap \{\phi_E\le \phi_E(w)/2\}\subset \left\{\phi_E\ge  (2c_2)^{-1}\phi_E(w)^{1+1/\beta}\right\}.
$$
 Since $\{\phi_E\ge  \phi_E (w)/2\} \subset \{\phi_E>  c^{-1}\phi_E(w)^{1+1/\beta}\}$ if $c\gg 1$, we have
 $$
 \{g_\Omega(\cdot,w)\le -1\} \subset \left\{\phi_E >  c^{-1}\phi_E(w)^{1+1/\beta}\right\}. 
 $$
 By the symmetry of $g_\Omega$, we immediately get
 $$
  \{g_\Omega(\cdot,w)\le -1\} \subset \left\{\phi_E <  (c\phi_E(w))^{\frac{\beta}{1+\beta}}\right\}.
   $$ 
           \end{proof}

             \begin{proof}[Proof of Theorem \ref{th:LowDist}]
$(1)$ Let $c$ be as Proposition \ref{prop:Green_2}. Let $z$ be  sufficiently close to $\partial \Omega$. Take $n\in \mathbb Z^+$ such that $\lambda^n\le \delta_\Omega(z)\le \lambda^{n-1}$. Write 
$$
\bigcap_{a\in \partial \Omega}\mathcal N_a^n(\varepsilon,\lambda)=\{k_1<k_2<\cdots<k_{m_n}\}.
$$
We may choose  a Bergman geodesic  jointing $z_0$ to $z$,  and a finite number of points  on this geodesic with the following order
  $$
  z_0 \rightarrow z_{k_1}\rightarrow z_{k_2}\rightarrow \cdots  \rightarrow z,
  $$
  such that
  $$
  \lambda^{k_j-1/3}\le \delta_\Omega(z_{k_j})\le \lambda^{k_j-1/2}.
    $$
  By Proposition \ref{prop:Green_2} we have 
  $$
  \{g_\Omega(\cdot,z_{k_j})\le -c\}\cap \{g_\Omega(\cdot,z_{k_{j+1}})\le -c\}=\emptyset
  $$
   so that  $d_B(z_{k_j},z_{k_{j+1}})\ge c_1>0$ for all $j$, in view of \eqref{eq:BG_2}.
  Since $m_n\gtrsim n$, we have
      $$
  d_B(z_0,z)  \ge  \sum_j d_B(z_{k_j},z_{k_{j+1}})\gtrsim n \gtrsim  |\log \delta_\Omega(z)|.
    $$
    
    $(2)$ The assertion follows directly from Theorem \ref{th:CT} and  Corollary 1.8 in \cite{ChenH-index}.
    
   $(3)$      Let $c$ be as Proposition \ref{prop:Green_1}.  Let $z$ be  sufficiently close to $\partial \Omega$. We may choose  a Bergman geodesic  jointing $z_0$ to $z$,  and a finite number of points $\{z_k\}_{k=1}^m$ on this geodesic with the following order
  $$
  z_0 \rightarrow z_1\rightarrow z_2\rightarrow \cdots \rightarrow z_m \rightarrow z,
  $$
  where
  $$
  c\,\phi_E(z_{k+1})^{\frac{\beta}{1+\beta}}=c^{-1}\phi_E(z_{k})^{\frac{1+\beta}\beta} 
   $$
  and
  $$
  c^{-1} \phi_E(z_m)^{\frac{1+\beta}\beta}\le  \phi_E(z)\le  c\phi_E(z_{m})^{\frac{\beta}{1+\beta}}.
     $$
  By Proposition \ref{prop:Green_1} we have 
  $$
  \{g_\Omega(\cdot,z_k)\le -1\}\cap \{g_\Omega(\cdot,z_{k+1})\le -1\}=\emptyset
  $$
   so that  $d_B(z_k,z_{k+1})\ge c_1>0$ for all $k$.
  
   Note that
   \begin{eqnarray*}
   \log \phi_E(z_0) & = & \left(\frac{\beta}{1+\beta}\right)^2 \log \phi_E(z_1)+ \frac{\beta}{1+\beta}\, \log c^2=\cdots \\
   & = & \left(\frac{\beta}{1+\beta}\right)^{2m} \log \phi_E(z_m) + \frac{\beta}{1+\beta}\,\frac{1-\left(\frac{\beta}{1+\beta}\right)^{2m}}{1-\left(\frac{\beta}{1+\beta}\right)^2}\, \log c^2. 
      \end{eqnarray*}
      Thus we have
      $$
      m\asymp \log |\log \phi_E(z_m)|\asymp \log |\log \phi_E(z)| \gtrsim \log\log |\log \delta_\Omega(z)|,
      $$
      so that 
    $$
  d_B(z_0,z)  \ge  \sum_{k=1}^{m-1} d_B(z_k,z_{k+1})\ge c_1(m-1) \gtrsim \log\log |\log \delta_\Omega(z)|.
    $$
 \end{proof}
           
   \section{Proof of Theorem \ref{th:HM}}
   
   Recall the following basic properties of quasiconformal (q.c.) mappings  (cf. \cite{Ahlfors}) and holomorphic motions (cf.  cf.  \cite{MSS},  \cite{BersRoyden}, \cite{Slodkowski},  \cite{ST}).  
   
   \begin{enumerate}
  \item (Circular Distortion Theorem). If $f$ is a $K-$q.c.  mapping of $\mathbb C$ fixing $\infty$, then 
 $$
|f(z_2)-f(z_0)|\le C_K\, |f(z_1)-f(z_0)|,\ \ \ \forall\,z_1,z_2\in \partial \mathbb D_r(z_0),\,\forall\,r>0.
$$
\item (H\"older continuity).  If $f$ is a $K-$q.c.  mapping of $\mathbb C$ fixing $\infty$, then
$$
C_K^{-1}|z-z'|^K \le |f(z)-f(z')|\le C_K|z-z'|^{1/K}.
$$
\item ($\lambda-$Lemma).    Every holomorphic motion $F:\mathbb D\times
E\rightarrow \mathbb C$  admits an extension to a holomorphic motion $F:\mathbb D\times \mathbb C\rightarrow
\mathbb C$ such that
  $F(\lambda,\cdot):\mathbb C\rightarrow \mathbb C$ is a
quasiconformal self-homeomorphism of dilatation not exceeding $
\frac{1+|\lambda|}{1-|\lambda|}$,  $\forall\,\lambda\in \mathbb D$.  
\end{enumerate}

   \begin{proposition}\label{prop:HM}
Given $0<r<1$,  there exists a constant $C>0$ such that   
   $$
   K_{ \Gamma_r(F)}((\lambda,w))\ge C \delta_{ \Gamma_r(F)}(\lambda,w)^{-2}.
   $$
   \end{proposition}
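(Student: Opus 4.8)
The plan is to read off Proposition~\ref{prop:HM} from Theorem~\ref{th:CapBerg}, applied to $\Omega=\Gamma_r(F)\subset\mathbb C^2$. First one notes that $\Gamma_r(F)$ is bounded (by the joint continuity of the holomorphic motion on $\overline{\mathbb D_r}\times\overline{\mathbb D}$) and pseudoconvex; the pseudoconvexity is the one non-elementary ingredient, and I would invoke it as a structural fact about holomorphic-motion bundles --- it expresses that the lateral part of $\partial\Gamma_r(F)$ is a union of the complex curves $\Sigma_\zeta:=\{(\lambda,F(\lambda,\zeta)):\lambda\in\mathbb D_r\}$, $|\zeta|=1$, each the graph of the holomorphic function $F(\cdot,\zeta)$, while over $\{|\lambda|=r\}$ the domain agrees with the pseudoconvex $\mathbb D_r\times\mathbb C$, so $\Gamma_r(F)$ is locally pseudoconvex at every boundary point. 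Granting this, everything reduces to verifying the hypothesis \eqref{eq:CB_1}: I must find $\alpha>1$, $\varepsilon>0$ such that every $p=(\lambda_0,w_0)\in\Gamma_r(F)$ lies on a complex line $\mathcal L_p$ with $\mathcal C_l\bigl(\mathcal L_p\cap B(p,\alpha\delta)\cap\Gamma_r(F)^c\bigr)\ge\varepsilon\delta$, where $\delta:=\delta_{\Gamma_r(F)}(p)$.

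The key preliminary is a comparison of boundary distances. By the $\lambda$-lemma each slice map $F(\lambda,\cdot)$ is a quasiconformal homeomorphism of $\mathbb C$, so the slice of $\Gamma_r(F)$ at $\lambda_0$ is the Jordan domain $F(\lambda_0,\mathbb D)$; with $\delta_{\lambda_0}(w_0)$ its boundary distance at $w_0$ (notation of Theorem~\ref{th:HM}), one has
$$c\,\min\{\delta_{\lambda_0}(w_0),\,r-|\lambda_0|\}\ \le\ \delta\ \le\ \min\{\delta_{\lambda_0}(w_0),\,r-|\lambda_0|\}$$
with $c\in(0,1]$ depending only on $r$. The upper bound is clear, since the nearest in-slice boundary point $a_0$ gives $(\lambda_0,a_0)\in\partial\Gamma_r(F)$ while $(\lambda_0r/|\lambda_0|,w_0)$ (or any $(\mu,w_0)$ with $|\mu|=r$) lies outside $\Gamma_r(F)$. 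For the lower bound, the distance from $p$ to the boundary over $\{|\lambda|=r\}$ is $\ge r-|\lambda_0|$, while the distance to the lateral boundary is $\gtrsim\delta_{\lambda_0}(w_0)$ by minimizing $|\lambda-\lambda_0|^2+\bigl(\delta_{\lambda_0}(w_0)-C_1|\lambda-\lambda_0|\bigr)^2$ over $\lambda$; here $C_1$ is a Lipschitz bound for $\lambda\mapsto F(\lambda,\zeta)$ on $\overline{\mathbb D_r}$, uniform over $|\zeta|=1$, furnished by a Cauchy estimate for the holomorphic functions $F(\cdot,\zeta)$ together with the uniform bound on $F$.

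Now set $\alpha:=2\sqrt{1+C_1^{2}}$ and choose $\mathcal L_p$ by which term attains the minimum. If $\delta_{\lambda_0}(w_0)\le r-|\lambda_0|$, take the horizontal line $\mathcal L_p=\{\lambda_0\}\times\mathbb C$: then $\mathcal L_p\cap\Gamma_r(F)^c\supseteq\{\lambda_0\}\times\partial F(\lambda_0,\mathbb D)$, the Jordan curve $\partial F(\lambda_0,\mathbb D)$ contains a subcontinuum through $a_0$ of diameter $\ge\tfrac12\delta_{\lambda_0}(w_0)$, and $\overline{B(a_0,\tfrac12\delta_{\lambda_0}(w_0))}\subset B(w_0,\alpha\delta)$ by the comparison; since a continuum of diameter $d$ has $\mathcal C_l\ge d/4$ (apply the $1$-Lipschitz orthogonal projection onto a suitable line and Theorem 5.3.1 in \cite{Ransford}), we get $\mathcal C_l(\mathcal L_p\cap B(p,\alpha\delta)\cap\Gamma_r(F)^c)\ge\tfrac18\delta_{\lambda_0}(w_0)\ge\tfrac18\delta$. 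If instead $r-|\lambda_0|<\delta_{\lambda_0}(w_0)$, take $\mathcal L_p=\mathbb C\times\{w_0\}$: then $\mathcal L_p\cap\Gamma_r(F)^c\supseteq\{(\lambda,w_0):|\lambda|=r\}$, and $B(p,\alpha\delta)$ meets this circle either in a subcontinuum through $r\lambda_0/|\lambda_0|$ of diameter $\ge(\alpha-1)\delta$, or in the whole circle, of capacity $r\ge\delta$; in both cases $\mathcal C_l(\mathcal L_p\cap B(p,\alpha\delta)\cap\Gamma_r(F)^c)\gtrsim\delta$. Hence \eqref{eq:CB_1} holds with $\alpha,\varepsilon$ depending only on $r$ and the size of $F$, and Theorem~\ref{th:CapBerg} gives $K_{\Gamma_r(F)}(p)\gtrsim\delta_{\Gamma_r(F)}(p)^{-2}$, which is the proposition.

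The decisive step --- and the only real obstacle --- is the pseudoconvexity of $\Gamma_r(F)$; beyond that the argument is elementary plane geometry, the points demanding care being the uniform (in $\lambda$ and $\zeta$) control of the motion in the distance comparison, the handling of $p$ with $|\lambda_0|$ close to $r$, and the need for one pair $(\alpha,\varepsilon)$ valid for every $p\in\Gamma_r(F)$ --- including those far from $\partial\Gamma_r(F)$, where the ``whole circle'' alternative in the second case does the job.
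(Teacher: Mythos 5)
Your route is genuinely different from the paper's, which dispatches the proposition in one line: $\Gamma_r(F)$ is fat, $\overline{\Gamma_r(F)}$ is polynomially convex by Slodkowski's theorem (\cite{Slodkowski}, Prop.\ 3.3), hence Theorem~\ref{th:KernelEstimate}$(c)$ applies. That appeal to Slodkowski does two things at once: it delivers pseudoconvexity of $\Gamma_r(F)$ (as a decreasing limit of analytic polyhedra, cf.\ the proof of part~$(c)$) and it supplies the circular-projection capacity estimate needed to feed Theorem~\ref{th:CapBerg}. You instead verify the hypothesis~\eqref{eq:CB_1} of Theorem~\ref{th:CapBerg} directly, via the Jordan-curve structure of each slice $F(\lambda,\mathbb D)$ and the comparison $\delta_{\Gamma_r(F)}(\lambda,w)\asymp\min\{\delta_\lambda(w),\,r-|\lambda|\}$. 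That part checks out: the two-case choice of $\mathcal L_p$, the boundary-bumping argument producing a continuum of diameter $\gtrsim\delta$ in $\mathcal L_p\cap B(p,\alpha\delta)\cap\Gamma_r(F)^c$, and the bound $\mathcal C_l\ge d/4$ for a continuum of diameter $d$ are all correct, and uniformity in $p$ follows from joint continuity of $F$ and a Cauchy estimate for $F(\cdot,\zeta)$ on $\overline{\mathbb D_{r}}$. This is a more elementary substitute for the paper's capacity step and does not itself need Slodkowski.

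The gap is exactly where you flag it: pseudoconvexity of $\Gamma_r(F)$. The heuristic you give --- that the lateral boundary is a union of holomorphic graphs $\Sigma_\zeta$ and is therefore Levi-flat --- is not a proof and cannot be quoted as stated: $\partial\Gamma_r(F)$ is only a topological hypersurface fibered by quasicircles, not a $C^1$ hypersurface, so the differential Levi-flatness criterion is unavailable, and ``a boundary swept out by complex curves'' does not by itself force pseudoconvexity. One correct self-contained fix is a Kontinuit\"atssatz argument: $(\mathbb D_r\times\mathbb C)\setminus\Gamma_r(F)=\bigcup_{|\zeta|\ge1}\Sigma_\zeta$ is a continuous family of graphs $\{w=F(\lambda,\zeta)\}$; for an analytic disc $f:\overline{\mathbb D}\to\mathbb D_r\times\mathbb C$ with $f(\partial\mathbb D)\subset\Gamma_r(F)$, the number of zeros in $\mathbb D$ of $t\mapsto w(f(t))-F(\lambda(f(t)),\zeta)$ is an integer-valued continuous function of $\zeta$ on the connected set $\{|\zeta|\ge1\}\cup\{\infty\}$, vanishing at $\zeta=\infty$ since $F(\lambda,\zeta)\to\infty$, hence identically zero, so $f(\mathbb D)\subset\Gamma_r(F)$. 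Alternatively you may simply invoke Slodkowski --- but then the paper's shorter route through Theorem~\ref{th:KernelEstimate}$(c)$ is already open, and your by-hand verification of~\eqref{eq:CB_1} becomes superfluous. Either way, as written the pseudoconvexity claim is a real hole that has to be closed before the rest of the argument can run.
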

   
   \begin{proof}
   Clearly,  $\Gamma_r(F)$ is a fat domain.  On the other hand,  $\overline{\Gamma_r(F)}$ is polynomially convex (cf.  \cite{Slodkowski},  Proposition 3.3).  Thus Theorem \ref{th:KernelEstimate}/$(c)$ applies.  
   \end{proof}
   
  \begin{proof}[Proof of Theorem \ref{th:HM}]
   We shall borrow some techniques from \cite{CZ2}.  Given $z^\ast\in
\mathbb D$,  take $z_\ast\in \mathbb D^c$ with  $ z_\ast/ z^\ast\in \mathbb R^+$ and 
 $\frac{z^\ast+z_\ast}2\in \partial
\mathbb D$.  It follows the Circular Distortion Theorem that
\begin{eqnarray*}
\max_{|z-z^\ast|=1-|z^\ast|}|F(\lambda,z)-F(\lambda,z^\ast)| & \le &
C_r
\min_{|z-z^\ast|=1-|z^\ast|}|F(\lambda,z)-F(\lambda,z^\ast)|\\
\max_{|z-z_\ast|=1-|z^\ast|}|F(\lambda,z)-F(\lambda,z_\ast)| & \le &
C_r
\min_{|z-z_\ast|=1-|z^\ast|}|F(\lambda,z)-F(\lambda,z_\ast)|
\end{eqnarray*}
and
\begin{eqnarray*}
&& \max_{|z-\frac{z^\ast+z_\ast}2|=1-|z^\ast|}\left|F(\lambda,z)-F\left(\lambda,\frac{z^\ast+z_\ast}2\right)\right|\\
& \le & C_r
\min_{|z-\frac{z^\ast+z_\ast}2|=1-|z^\ast|}\left|F(\lambda,z)-F\left(\lambda,\frac{z^\ast+z_\ast}2\right)\right|
\end{eqnarray*}
hold for all $\lambda\in \overline{\mathbb D}_{r}$ and a (generic)
constant $C_r>0$.  It follows that if $C_r\gg 1$ then
\begin{equation}\label{eq:HM_5}
\bigcup_{\lambda\in \mathbb D_{r}}\left\{(\lambda,w)\in {\mathbb 
C}^2:|w-F(\lambda,z^\ast)|<C_r^{-1} \left|F\left(\lambda,z_\ast\right)-F(\lambda,z^\ast)\right|
\,\right\}\subset \Gamma_r(F)
\end{equation}
and
$$
h(\lambda,w):=\frac{F(\lambda,z^\ast)-F(\lambda,z_\ast)}{C_r[w-F(\lambda,z_\ast)]}
$$
gives a holomorphic mapping from $\Gamma_r(F)$ into $\mathbb D$.  Given
$(\lambda^\ast,F(\lambda^\ast,z^\ast))\in \Gamma_{r'}(F)$, 
define a holomorphic embedding as follows
\begin{eqnarray*}
\iota: && \Gamma_r(F)
\rightarrow  \mathbb D_r\times \mathbb D\\
&& (\lambda,w)  \mapsto (\lambda,w'):=
\left(\lambda,
\frac{h(\lambda,w)-h(\lambda,F(\lambda,z^\ast))}2\right).
\end{eqnarray*}
Since
\begin{eqnarray*}
h(\lambda,w)-h(\lambda,F(\lambda,z^\ast)) & = &
-C_r^{-1} \frac{w-F(\lambda,z^\ast)}{w-F(\lambda,z_\ast)},
\end{eqnarray*}
there exists a constant $s=s(r)<1$ such that
\begin{equation}\label{eq:inclusion}
\mathbb D_r\times \mathbb D\supset \iota (\Gamma_r(F))\supset \mathbb D_r \times \mathbb D_s.
\end{equation}
Set $w^\ast:=F(\lambda^\ast,z^\ast)$.  Note that
\begin{eqnarray*}
K_{\Gamma_r(F)}((\lambda^\ast,w^\ast)) & = & K_{\iota(\Gamma_r(F))}( \iota(\lambda^\ast,w^\ast) )  |  \partial w'/\partial w|^2\\
 & = & K_{\iota(\Gamma_r(F))}( (\lambda^\ast,0)) \, C_r^{-2} | F(\lambda^\ast,z^\ast)-F(\lambda^\ast,z_\ast)|^{-2} \\
 &\asymp & 
\left|F(\lambda^\ast,z^\ast)-F\left(\lambda^\ast,z_\ast\right)\right|^{-2}
\end{eqnarray*}
in view of \eqref{eq:inclusion},  where the implicit constants depend only $r,r'$.  This combined with Proposition \ref{prop:HM} gives
$$
\left|F(\lambda^\ast,z^\ast)-F\left(\lambda^\ast,z_\ast \right)\right|
 \lesssim  \delta_{ \Gamma_r(F)}(\lambda^\ast,w^\ast)
$$
whenever $\delta_{ \Gamma_r(F)}(\lambda^\ast,w^\ast)\le 1/2$.
Since
$$
\left|F(\lambda^\ast,z^\ast)-F\left(\lambda^\ast,z_\ast\right)\right|\ge \delta_{\lambda^\ast}(w^\ast),
$$
we conclude the proof.  
   \end{proof}

          \end{document}